\newcommand{\ignore}[1]{}
\newtheorem{definition}{Definition}
\newtheorem{proposition}{Proposition}
\newtheorem{theorem}{Theorem}
\newtheorem{remark}{Remark}
\newtheorem{lemma}{Lemma}
\newcommand{\loc}{\mathrm{loc}}
\newcommand{\ho}{\mathrm{hom}}
\newcommand{\R}{\mathbb{R}}
\newcommand{\Z}{\mathbb{Z}}
\newcommand{\Id}{\text{Id}}
\newcommand{\e}{\varepsilon}
\newcommand{\calC}{\mathcal{C}}
\newcommand{\calY}{\mathcal{Y}}
\newcommand{\calZ}{\mathcal{Z}}
\newcommand{\calM}{\mathcal{M}}
\newcommand{\Ent}{\mathrm{Ent}\,}
\mathchardef\emptyset="001F
\newcommand{\Norm}[1]{|\!|\!|#1|\!|\!|}
\newcommand{\cov}[2]{\mathrm{cov}\left[#1;#2\right]}
\newcommand{\osc}[2]{\mathrm{osc}_{#1}#2}
\newcommand{\expec}[1]{\left\langle #1 \right\rangle}
\newcommand{\step}[1]{\noindent \textit{Step} #1.}
\newcommand{\substep}[1]{\noindent \textit{Substep} #1.}
\newcommand{\supp}[1]{\mathrm{supp} \left(#1 \right)}
\newcommand{\fun}{\mathrm{fct}}
\title[Quantitative homogenization for correlated coefficient fields]
{Quantitative estimates in stochastic homogenization for correlated coefficient fields}
\author[A. Gloria]{Antoine Gloria}
\author[S. Neukamm]{Stefan Neukamm}
\author[F. Otto]{Felix Otto}
\date{\today}
\address[Antoine Gloria]{Sorbonne Universit\'e, CNRS, Universit\'e de Paris, Laboratoire Jacques-Louis Lions (LJLL), F-75005 Paris, France
\& 
Universit\'e Libre de Bruxelles, Belgium}
\email{gloria@ljll.math.upmc.fr}
\address[Stefan Neukamm]{Faculty of Mathematics, TU Dresden, Germany}
\email{stefan.neukamm@tu-dresden.de}
\address[Felix Otto]{Max Planck Institute for Mathematics in the Sciences, Leipzig, Germany}
\email{otto@mis.mpg.de}
\begin{document}

\maketitle


{\bf Abstract}: 
This paper is about the homogenization of linear elliptic operators in divergence form
with stationary random coefficients that have only slowly decaying correlations. 
It deduces optimal estimates of the homogenization error 
from optimal growth estimates of the (extended) corrector. In line with the heuristics, 
there are transitions at dimension $d=2$, and for a correlation-decay exponent $\beta=2$;
we capture the correct power of logarithms coming from these two sources of criticality.
 
\smallskip

The decay of correlations is sharply encoded in terms of a multiscale logarithmic Sobolev inequality (LSI)
for the ensemble under consideration --- the results would fail if correlation decay were encoded in terms
of an $\alpha$-mixing condition. Among other ensembles popular in modelling of random media,
this class includes coefficient fields that are local transformations of stationary Gaussian fields.

\smallskip

The optimal growth of the corrector $\phi$ is derived from bounding the size of spatial averages $F=\int g\cdot\nabla\phi $
of its gradient. This in turn is done by a (deterministic) sensitivity estimate of $F$, that is, 
by estimating the functional derivative $\frac{\partial F}{\partial a}$ of $F$ w.~r.~t.~the coefficient field $a$. 
Appealing to the LSI in form of concentration of measure yields a stochastic estimate on $F$.
The sensitivity argument relies on a large-scale Schauder theory for the heterogeneous elliptic operator $-\nabla\cdot a\nabla$.
The treatment allows for non-symmetric $a$ and for systems like linear elasticity.

\bigskip

\tableofcontents

\section{Introduction}

Elliptic equations with random coefficients were first considered 
by Papanicolaou and Varadhan \cite{Papanicolaou-Varadhan-79} and by Kozlov \cite{Kozlov-78}
in the context of qualitative stochastic homogenization.
For quantitative results, new ideas and suitable quantitative assumptions
on ergodicity are needed.
The only early result in that direction was the (suboptimal) estimate obtained by Yurinskii \cite{Yurinskii-86}
establishing algebraic decay of the homogenization error for $d>2$ under a uniform mixing condition,
based on Nash's heat kernel estimates.
Twenty years later progress came from the mathematical physics community:
Naddaf and Spencer \cite{Naddaf-Spencer-98}, 
followed by Conlon and Naddaf \cite{Conlon-Naddaf-00}, 
quantified ergodicity in the form of a spectral gap estimate to obtain
optimal bounds on fluctuations of the energy density of the corrector for small ellipticity ratio 
(a perturbative result capitalizing on Meyer's estimate), 
identifying the central limit theorem scaling.
This approach was then combined with more elliptic regularity theory (de Giorgi's theory and Caccioppoli estimates)
by the first and third authors to obtain optimal estimates on the corrector, the fluctuations of the energy density of the corrector,
and the approximation of homogenized coefficients in \cite{Gloria-Otto-09,Gloria-Otto-09b,Gloria-Otto-10b}.
Using more probabilistic arguments, Mourrat \cite{Mourrat-10} obtained suboptimal estimates on the
decay of the associated semi-group (the environment as seen by the particle) in high dimensions and made
use of spectral theory to prove quantitative results (see also \cite{Gloria-Mourrat-10}).
The combination of these three ingredients resulted in \cite{Gloria-Neukamm-Otto-14}, 
where we proved optimal estimates in any dimension for scalar equations 
under a quantified ergodicity assumption in the form of a spectral gap estimate. 
In particular, we obtained the optimal decay of the semi-group and optimal error estimates of the so-called representative volume element method.
These results lack generality in two respects: They only apply to 
ensembles of coefficient fields with integrable correlation tails, and to scalar equations (as opposed to systems).

\medskip

In the present contribution, we continue our program on random elliptic \emph{systems} for arbitrarily \emph{correlated} coefficient fields that started
with \cite{GNO-reg,DG1,DG2}.
In line with our earlier work we consider ensembles of coefficient fields that satisfy a functional inequality, 
albeit in the significantly more flexible multiscale form introduced by Duerinckx and the first author in \cite{DG1,DG2}.
Quantifying ergodicity by this assumption has the advantage to ensure \emph{strong concentration properties} while allowing for \emph{poor decay of correlations}. 
Such functional inequalities are satisfied by the representative example of Gaussian coefficient fields that display arbitrarily slow-decaying correlations.
As in our previous works \cite{Gloria-Otto-09,Gloria-Otto-09b,Gloria-Otto-10b,Gloria-Neukamm-Otto-14},
our approach relies on (deterministic) sensitivity estimates, i.~e.~on estimating how sensitively the corrector depends on the coefficient field. 
However, as we discuss in the next paragraph, they now capitalize on a large-scale regularity theory of \cite{GNO-reg} for the random operator.
When it comes to this regularity theory, as in \cite{Gloria-Otto-14}, we substitute (quenched) Green's function estimates by estimates on the dual equation. 
With this approach, we obtain new estimates in quantitative stochastic homogenization, both for mildly and strongly correlated coefficient fields.
In particular, we identify the transition in the scaling laws both in terms of dimension ($d=2$ is critical) 
and in terms of decay of correlations (algebraic decay with exponent $\beta=2$ is critical).  
These estimates are optimal in terms of scaling and have good stochastic integrability (namely stretched exponential, yet still suboptimal, moments). 
They serve as a basis to study fluctuations in such a setting, cf.~\cite{DGO,DFG}.

\medskip

Let us comment on the large-scale regularity theory. 
Due to the crucial use of the deterministic elliptic regularity by De Giorgi and Nash, 
the proofs of  \cite{Gloria-Neukamm-Otto-14} and \cite{Gloria-Otto-10b} are only valid for scalar elliptic operators. 
In order to treat systems, and strongly correlated situations, we replace the poor \emph{deterministic} elliptic regularity 
by a strong \emph{generic} large-scale regularity theory, that holds for systems with random coefficients.
This regularity theory in the large was first introduced by Avellaneda \& Lin \cite{Avellaneda-Lin-87} in the setting of periodic coefficient fields,
extended to the random case by Armstrong \& Smart \cite{Armstrong-Smart-14}, and refined in ~\cite{GNO-reg}.
%
We refer the reader to the companion article~\cite{GNO-reg} for a thorough discussion of the literature, 
and in particular the works \cite{Marahrens-Otto-13,Gloria-Marahrens-14} based on functional inequalities, 
and the work of \cite{Armstrong-Smart-14,Armstrong-Mourrat-14} based on finite-range or $\alpha$-mixing assumptions.

\medskip

We stress that we do not consider random coefficient fields that satisfy ``linear mixing properties", i.e.~with a finite-range of dependence
or satisfying $\alpha$-mixing conditions.
Between the first and last versions of this manuscript, the case of finite range of dependence was successfully treated 
by Armstrong, Kuusi, and Mourrat \cite{AKM1,AKM2,AKMbook}, and independently by the first and last authors \cite{GO16}.
Conversely, the methods used in \cite{AKM2} and \cite{GO16} do not seem to apply in a straightforward way to random coefficient fields that 
satisfy the ``nonlinear mixing conditions" considered here.
Since multiscale functional inequalities are satisfied by all the models of random media of \cite{Torquato-02} 
(\emph{the} reference textbook on heterogeneous materials in the applied sciences), the present approach is relevant when it comes to applications. 

\section{Main results and structure of the proofs}

\subsection{Assumptions and notation}

We first state our assumptions on the coefficient fields,
and then recall the definition of the extended corrector.

\medskip

{\bf Assumptions on the ensemble of coefficient fields}.
Our two assumptions on the space of (admissible) Lebesgue measurable coefficient fields $a(x)$
are pointwise boundedness and uniform ellipticity. Without loss of generality,
we may assume that the bound is unity:
\begin{equation}\label{f.56}
|a(x)\xi|\le|\xi|\quad\mbox{for all}\;\xi\in\mathbb{R}^d\;\mbox{and}\;x\in\mathbb{R}^d.
\end{equation}
We require uniform ellipticity only in the integrated form:
\begin{equation}\label{f.40}
\int\nabla\zeta\cdot a\nabla\zeta\ge\lambda\int|\nabla\zeta|^2\quad
\mbox{for all smooth and compactly supported}\;\zeta,
\end{equation}
where throughout this paper $\int$ is a short-hand notation for $\int_{\R^d}$.
We also use {\it scalar notation} for convenience.
However, we only use arguments that are available in the case of systems, that is,
when $\mathbb{R}$-valued functions $\zeta$ are replaced by fields with values in some
finite dimensional Euclidean space $H$. Note that \eqref{f.40} covers the important case of linear elasticity in a context in which convex duality (or pointwise ellipticity) is not available
(as crucially used in  \cite{AKM2,GO16}).

\medskip

Next, we state the assumptions on the ``ensemble'' $\langle\cdot\rangle$,
a probability measure on the space of (admissible) coefficient fields (endowed e.g.~with the topology
induced by H-convergence, cf.~\cite[Lemma~18]{GO16}),
which will be assumed throughout the paper. Two of them are related
to the operation of the shift group $\mathbb{R}^d$ on the space of coefficient fields, 
which is defined as follows: With any shift vector $z\in\mathbb{R}^d$ and any coefficient field $a$, we associate the shifted field
$a(\cdot+z)$.
The first assumption is stationarity, which means that for any
shift $z\in\mathbb{R}^d$ the random coefficient fields $a$ and $a(\cdot+z)$
have the same (joint) distribution. 
The second assumption is ergodicity, which means that any (integrable) random variable
$F(a)$ that is shift invariant, that is, $F(a(\cdot+z))=F(a)$ for all shift vectors $z\in\mathbb{R}^d$
and $\langle\cdot\rangle$-almost every coefficient field $a$, is actually constant, that is $F(a)=\langle F \rangle$
for $\langle\cdot\rangle$-almost every coefficient field $a$. 
Under assumptions~\eqref{f.56}, \eqref{f.40}, stationarity, and ergodicity, homogenization holds  (for Dirichlet boundary data in the
case of systems), and the homogenized coefficient also satisfies 
\begin{equation*}
\int\nabla\zeta\cdot a_{\ho}\nabla\zeta\ge\lambda\int|\nabla\zeta|^2\quad
\mbox{for all smooth and compactly supported}\;\zeta 
\end{equation*}
and $|a_{\ho}\xi|\le(\frac{1}{\lambda}+1)|\xi|$ for all $\xi\in\mathbb{R}^d$.
In particular, $a_\ho$ is uniformly elliptic in the scalar case:
\begin{equation*} 
\xi\cdot a_{\ho}\xi\ge\lambda|\xi|^2
\quad\mbox{for all}\;\xi\in\mathbb{R}^d,
\end{equation*}
and satisfies the Legendre-Hadamard condition in the case of systems. 

\medskip

While stationarity and ergodicity are sufficient for qualitative homogenization, 
for quantitative results, we need to quantify the ergodicity assumption. 
Following \cite{DG2}, we assume quantitative ergodicity in the form of a multiscale logarithmic Sobolev inequality (LSI) for the 
functional derivative.
\begin{definition}\cite[Definition~2.4]{DG2}\label{def:WFI}
Let $\pi:\R_+\to \R_+$ be an integrable function, and $\expec{\cdot}$ be the ensemble associated with the random coefficients $a$.

We say that $\expec{\cdot}$ satisfies a multiscale logarithmic Sobolev inequality with weight $\pi$ if for all random variables $F$, we have
\begin{eqnarray}\nonumber
\Ent(F)&:=& \expec{F^2\log F^2}-\expec{F^2}\expec{\log F^2}
\\
&\leq& \expec{  \int_1^\infty  \int_{\R^d} |\partial^{\fun}_{x,\ell} F|^2 dx \ell^{-d} \pi(\ell) d\ell},\label{e:def-MLSI}
\end{eqnarray}
where $|\partial^{\fun}_{x,\ell} F|$ denotes the $L^1$-norm on the ball $B_\ell(x)$ of the functional derivative of $F$ w.r.t.\ $a$, i.e.\
\begin{eqnarray}\nonumber
|\partial^{\fun}_{x,\ell} F(a)|
&=& \sup\Big\{\limsup_{t\downarrow 0}\frac{1}{t}(F(a+t\delta a)-F(a))
\Big|
\\
&& \qquad \qquad \qquad \sup_{B_{\ell}(x)}|\delta a|\le 1,\delta a=0\;\mbox{outside }\;B_{\ell}(x)\Big\}
\nonumber\\
&=&\int_{B_{\ell}(x)} \Big| \frac{\partial F}{\partial a}(a,y)\Big|dy.\label{s.51}
\end{eqnarray}
\qed
\end{definition}
In the rest of this article, $\int$ stands for $\int_{\R^d}$.
For the relation to Malliavin calculus, we refer the reader to \cite{DG3,DO}.
\begin{remark}[Standard LSI]
If $\int_1^\infty\pi(\ell)\ell^{d}\,d\ell<\infty$, i.~e.~in particular if $\pi(\ell)$ is compactly supported, then \eqref{s.51} implies that $\expec{\cdot}$ satisfies the standard logarithmic Sobolev inequality (standard LSI),
\begin{equation*}
\frac{1}{C}  \Ent(F)\leq \expec{ \|\partial^{\fun}F\|_{1}^2}:=\int  \expec{|\partial_{x,1}^{\fun}F|^2}dx,
\end{equation*}
for an appropriate constant $C$ (only depending on the space dimension $d$ and the weight $\pi$). In that situation some of the proofs  in this paper simplify significantly, which is the reason why we address this case separately in some of the upcoming arguments.
\qed
\end{remark}

Let us give an example of a field $a$ that satisfies  \eqref{e:def-MLSI}.
Apply a nonlinear Lipschitz transform to a stationary Gaussian vector field with covariance function $c(x,y)$ satisfying $|c(x,y)|\le \gamma(|x-y|)$ where $\gamma$ is non-increasing and decays at least algebraically at infinity. Then, by \cite[Theorem~3.1]{DG2}, the ensemble $\expec{\cdot}$ is stationary and satisfies the multiscale LSI \eqref{e:def-MLSI} with weight
\begin{equation*} 
\pi(\ell)\,\sim\,-\gamma'(\ell).
\end{equation*}

\medskip

{\bf Extended corrector}. 
We recall the definition of the extended corrector $(\phi_i,\sigma_i)$, cf.~\cite[Lemma~1]{GNO-reg}. 
\begin{definition}\label{si}
Let $\langle\cdot\rangle$ be stationary and ergodic. There exist two random tensor fields
$\{\phi_i\}_{i=1,\cdots,d}$ and $\{\sigma_{ijk}\}_{i,j,k=1,\cdots,d}$ with the following properties:
The gradient fields $\nabla\phi_i$ and $\nabla\sigma_{ijk}$ are stationary, by which we understand
$\nabla\phi_i(a;x+z)=\nabla\phi_i(a(\cdot+z);x)$ for any shift vector $z\in\mathbb{R}^d$ (and likewise for $\nabla \sigma_{ijk}$), 
and have finite second moments and vanishing expectations:
\begin{equation*} 
\langle|\nabla\phi_i|^2\rangle\le \frac{1}{\lambda^2},\quad
\sum_{j,k=1,\cdots,d}\langle|\nabla\sigma_{ijk}|^2\rangle
\le {4d}(\frac{1}{\lambda^2}+1),\quad
\langle\nabla\phi_i\rangle=\langle\nabla\sigma_{ijk}\rangle=0.
\end{equation*}
Moreover, the field $\sigma$ is skew-symmetric in its last indices, that is,
\begin{equation*} 
\sigma_{ijk}=-\sigma_{ikj}.
\end{equation*}
For $\langle\cdot\rangle$-a.e.\ $a$ the equations
\begin{eqnarray}
-\nabla\cdot a(\nabla\phi_i+e_i)&=&0,\label{f.2}\\
\nabla\cdot\sigma_i&=&q_i,\label{f.5}\\
-\triangle\sigma_{ijk}&=&\partial_jq_{ik}-\partial_kq_{ij} \label{si.5}
\end{eqnarray}
weakly hold in $\R^d$ with $\{q_{ij}\}_{i,j=1,\cdots,d}$ given by
\begin{equation}\label{si.6}
q_i=a(\nabla\phi_i+e_i)-a_{\ho}e_i,\qquad a_{\ho} e_i:=\expec{a(\nabla\phi_i+e_i)},
\end{equation}
and where the divergence of a tensor field is defined as
$(\nabla\cdot\sigma_i)_j:=\partial_k\sigma_{ijk}$ (with the Einstein summation convention on repeated indices),
and the correctors are sublinear at infinity in the sense of 
$$
\lim_{R\uparrow \infty} \fint_{B_R} \frac{|(\phi_i,\sigma_{ijk})|^2}{R^2} \,=\,0.
$$
\qed
\end{definition}
Note that the extended corrector is unique up to an additive (random) constant.

\subsection{Main results: Quantitative estimates}

We establish three types of quantitative results:
\begin{itemize}
\item First, we show that spatial averages of the gradient of the extended corrector converge to zero at the same rate as the average of the coefficients converges to its expectation, which illustrates that the decorrelation properties of the coefficient field are inherited by the corrector gradient, cf.~Theorem~\ref{t1};
\item Second, we prove stretched exponential moment bounds on the growth of the (non-stationary) corrector of Definition~\ref{si}. Under a sharp condition on the dimension $d$ and the decay of the weight $\pi$, this implies existence, uniqueness, and stretched exponential moment bounds for \emph{stationary} correctors, cf.~Theorem~\ref{t2};
\item Third, we establish a quantitative two-scale expansion in the spirit of \cite{Gloria-Neukamm-Otto-11}, for which the scaling of the error now depends on $\pi$ and $d$,
cf.~Theorem~\ref{t3}.
\end{itemize}
These results extend our previous results in \cite{Gloria-Otto-10b} in various respects: They apply to \textit{systems} with \textit{correlated} coefficients, and yield \textit{stretched exponential stochastic integrability} (we however do not focus here on the optimal stochastic integrability).

\medskip

We start with the optimal decay of spatial averages of the gradient of the extended corrector.
\begin{theorem}\label{t1}
Assume that $\langle\cdot\rangle$ is stationary and satisfies the multiscale logarithmic Sobolev inequality \eqref{e:def-MLSI}
for $\pi(\ell)= (\ell+1)^{-\beta-1}$ and $\beta>0$.
Then, for all $r\ge 2$ and all deterministic gradient fields $g$ with 
\begin{equation*}
  |g(x)|\leq \frac1{(|x|+r)^{d}},\quad   |\nabla g(x)|\leq\frac1{(|x|+r)^{d+1}},  
\end{equation*}
such that for all $p\ge 1$ and some $C_1<\infty$
\begin{equation}\label{e.assump-Jensen}
  \expec{ \Big|\int(\nabla\phi\cdot g,\nabla\sigma\cdot g)\Big|^p}^\frac1p \le C_1\expec{ \big(\int_B |\nabla (\phi,\sigma)|^2\big)^\frac p2}^\frac1p ,
\end{equation}
there exists a random variable $\calC_{g,r}$ such that 
\begin{equation}\label{dec.1}
|\int(\nabla\phi\cdot g,\nabla\sigma\cdot g)|
\,\leq \, \calC_{g,r} \pi_*^{-\frac 12}(r),
\end{equation}
where the scaling factor $\pi_*$ is given by
\begin{eqnarray*}
\pi_*(r) \,:=\,
\begin{cases}
     r^{\beta}&\beta<d,\\
     r^d \log^{-1} r&\beta=d,\\
     r^{d}&\beta>d,
\end{cases}
\end{eqnarray*}
and $\calC_{g,r}$ satisfies (uniformly with respect to $r$ and $g$)
\begin{equation}\label{dec.2bis}
 \expec{\exp(\frac1{C} \calC_{g,r}^\alpha)}\,\le\,2
\end{equation}
for some constant $C=C(d,\lambda,C_1)$ and the exponent
$
\alpha=\frac{4d(\beta \wedge d)}{d(\beta \wedge d)+(\beta\wedge d)^2+2d^2}.
$
\qed
\end{theorem}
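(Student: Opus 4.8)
The plan is to follow the by-now-standard sensitivity/concentration scheme, but carefully tracking the multiscale weight $\pi$ and the two sources of criticality ($d=2$ and $\beta=d$). Write $F:=\int(\nabla\phi\cdot g,\nabla\sigma\cdot g)$. The argument splits into a deterministic sensitivity estimate, a functional-inequality step, and a buckling/reverse-Hölder step to pass from stochastic moments of $F$ (against a fixed weight $r$ that measures the scale of $g$) back to the claimed deterministic pointwise bound \eqref{dec.1} with the concentration property \eqref{dec.2bis}.

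\textit{Step 1: Sensitivity estimate.} First I would compute $\partial^{\fun}_{x,\ell}F$, i.e.\ bound $\int_{B_\ell(x)}|\frac{\partial F}{\partial a}(a,y)|\,dy$. Differentiating the defining equations \eqref{f.2}, \eqref{f.5}, \eqref{si.5} and \eqref{si.6} in the coefficient field, $\frac{\partial F}{\partial a}(a,y)$ is expressed through the corrector gradients $\nabla\phi,\nabla\sigma$ and through the (heterogeneous) Green's-function-type objects associated with the adjoint problem tested against $g$ — this is where the ``dual equation'' replacement of quenched Green's function estimates, as in \cite{Gloria-Otto-14}, enters. The key input is the large-scale ($C^{1,\alpha}$-type) Schauder regularity of $-\nabla\cdot a\nabla$ from \cite{GNO-reg}, which controls these dual quantities on dyadic annuli with the right algebraic decay dictated by the weights $|g(x)|\le(|x|+r)^{-d}$, $|\nabla g(x)|\le(|x|+r)^{-d-1}$. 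The outcome should be a bound of the schematic form
\begin{equation*}
\int_1^\infty\int_{\R^d}|\partial^{\fun}_{x,\ell}F|^2\,dx\,\ell^{-d}\pi(\ell)\,d\ell
\;\lesssim\;\int_1^\infty\Big(\int\omega_r(x)\,|\nabla(\phi,\sigma)|^2_{B_\ell(x)}\,dx\Big)\,\mu_r(\ell)\,d\ell
\end{equation*}
for suitable nonnegative weights $\omega_r,\mu_r$ whose total mass is controlled by $\pi_*^{-1}(r)$ after integrating the annular decay against $\pi(\ell)=(\ell+1)^{-\beta-1}$; this is precisely the computation that produces the three regimes $\beta<d$, $\beta=d$, $\beta>d$ and the logarithm at $\beta=d$.

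\textit{Step 2: From sensitivity to stochastic moments.} Next I would feed the Step-1 bound into the multiscale LSI \eqref{e:def-MLSI}. Using the standard equivalence between LSI and concentration of measure (Herbst-type argument), together with the $p$-th moment control provided by assumption \eqref{e.assump-Jensen} (which allows replacing local averages $|\nabla(\phi,\sigma)|^2_{B_\ell}$ by the single large-ball quantity $\int_B|\nabla(\phi,\sigma)|^2$ at the level of moments), one obtains stretched-exponential moments
\begin{equation*}
\expec{\exp\big(\tfrac1C|F|^\alpha\,\pi_*(r)^{\alpha/2}\big)}\le 2,
\end{equation*}
with $\alpha$ arising from combining the quadratic gain of LSI with the polynomial-in-$p$ loss coming from \eqref{e.assump-Jensen} and from the growth of $\int_B|\nabla(\phi,\sigma)|^2$; solving the resulting optimization in $p$ yields the stated $\alpha=\frac{4d(\beta\wedge d)}{d(\beta\wedge d)+(\beta\wedge d)^2+2d^2}$. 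Here one has to be slightly careful that at this stage $\langle|\nabla\phi|^2\rangle,\langle|\nabla\sigma|^2\rangle$ are only known to be bounded (Definition~\ref{si}), so the moments of $\int_B|\nabla(\phi,\sigma)|^2$ must be controlled by a bootstrap using the case $g=$ indicator-type kernels, or taken as an induction hypothesis.

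\textit{Step 3: Conclusion.} Finally, setting $\calC_{g,r}:=|F|\,\pi_*^{1/2}(r)$ gives \eqref{dec.1} trivially and \eqref{dec.2bis} from Step~2, with the constant $C$ depending only on $d,\lambda$ and the constant $C_1$ in \eqref{e.assump-Jensen}; uniformity in $r$ and $g$ is built in because all estimates were tracked with explicit $r$-dependence through $\pi_*$.

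\textit{Main obstacle.} The hard part is Step~1: obtaining the sensitivity estimate with the \emph{sharp} annular decay rates for the dual (linearized) problem, uniformly down to the unit scale, for \emph{systems} and \emph{non-symmetric} $a$. This is exactly where the large-scale Schauder theory of \cite{GNO-reg} is indispensable — de Giorgi–Nash is unavailable — and where the correlation structure enters through integrating the annular contributions against $\pi$, so the bookkeeping that yields the $\beta\wedge d$ dichotomy and the borderline logarithm has to be done with care. A secondary technical point is closing the argument for the moments of $\int_B|\nabla(\phi,\sigma)|^2$ in Step~2 without circularity, which typically forces an induction on scales or a simultaneous treatment of a family of kernels $g$.
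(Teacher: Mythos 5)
Your overall architecture --- a deterministic sensitivity estimate obtained by duality and the large-scale regularity of \cite{GNO-reg}, then concentration via the multiscale LSI \eqref{e:def-MLSI}, then the extra input \eqref{e.assump-Jensen} --- is the paper's; your Step~1 is essentially Proposition~\ref{s.bis}(a) (proved via \eqref{s.S2}, Lemma~\ref{LEas}, the mean-value property, and Remark~\ref{R:1} for the $\nabla\sigma$-averages), with the minimal radius $r_*$ of Lemma~\ref{lem:reg} appearing in the bound. The genuine gap is in Step~2: the mechanism you describe does not produce the stated exponent $\alpha$. In the paper, \eqref{e.assump-Jensen} is \emph{not} used inside the LSI chain to replace local averages (those are handled by the mean-value property \eqref{eq:mean-value}, stationarity, and the moment bound $\expec{r_*^{dq}}^{1/q}\lesssim q^{d/(\beta\wedge d)}$ coming from \eqref{e.int-sto-r*}); it is used to produce a \emph{second, independent} moment bound: \eqref{e.assump-Jensen} together with $\int_B|\nabla(\phi,\sigma)|^2\lesssim r_*^d$ gives $\expec{\calC_{g,r}^{2q}}^{1/(2q)}\lesssim r^{(\beta\wedge d)/2}\,q^{d/(2(\beta\wedge d))}$ (cf.~\eqref{e.optim2}), which grows in $r$ but has only CLT-type growth in $q$, while the LSI/sensitivity route gives \eqref{e.optim1}, which degrades faster in $q$ (a term $q^{d/(\beta\wedge d)}$ from $\expec{r_*^{2qd}}^{1/(2q)}$) but carries inverse powers of $r$. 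The stated $\alpha$ emerges only by playing these two bounds against each other at the crossover scale $r_q=q^{(\beta\wedge d+d)/(2(\beta\wedge d)d)}$: using \eqref{e.optim1} for $r\ge r_q$ and \eqref{e.optim2} for $r\le r_q$ yields $\expec{\calC_{g,r}^{2q}}^{1/(2q)}\lesssim q^{1/\alpha}$ uniformly in $r$, hence \eqref{dec.2bis} via \eqref{L:exp:1}.

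By contrast, the combination you describe --- the $\sqrt q$ of the LSI times the $q^{d/(2(\beta\wedge d))}$ loss from \eqref{e.assump-Jensen} and the growth of $\int_B|\nabla(\phi,\sigma)|^2$ --- would give at best the exponent $\frac{2(\beta\wedge d)}{\beta\wedge d+d}$, which is strictly smaller than $\alpha$ whenever $\beta<d$; and if one runs the LSI route alone at fixed $r\sim 1$ (ignoring its gain in $r$), the unavoidable $q^{d/(\beta\wedge d)}$ loss forces the even worse $\frac{2(\beta\wedge d)}{\beta\wedge d+2d}$ (this is the content of the remark after Theorem~\ref{t1} that without \eqref{e.assump-Jensen} one gets a smaller $\alpha$). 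So the missing idea is precisely the two-regime crossover exploiting the $r$-dependence of the sensitivity estimate \eqref{e.prop:sensitivity}. Two smaller points: your worry about circularity for the moments of $\int_B|\nabla(\phi,\sigma)|^2$ is unfounded --- the mean-value property and \eqref{e.int-sto-r*} give all moments directly, so no bootstrap, induction on scales, or family of kernels is needed; and in the critical case $\beta=d$ the paper avoids losing the logarithm in $\pi_*$ (and hence in the exponent) by the hole-filling refinement of Remark~\ref{R:hole}, a point your sketch would also need to address.
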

\begin{remark}
Note that Theorem~\ref{t1} also holds without assumption~\eqref{e.assump-Jensen}, albeit for a smaller exponent $\alpha>0$.
\end{remark}
The scaling factor $\pi_*^{-\frac12}(r)$ is natural: It is the expected scaling for averages $|\fint_{B_r}(a(x+y)-\expec{a})\,dy|$ of the coefficient field $a$, which coincides with the CLT scaling for $\beta>d$ (in which case, the covariance function of $a$ is integrable and the standard LSI holds). For $\beta\le d$, although this scaling is generically {optimal} (cf.~\cite[Section~5]{DFG}), there might exist Gaussian statistics for which \eqref{dec.1} is not optimal (this is related to the Hermite rank of $\nabla (\phi,\sigma)$, 
which can be made as high as desired in the very peculiar case of dimension $d=1$, cf.~\cite{Gu-Bal-12,LNZH-17} inspired by \cite{Taqqu}).
The stochastic integrability in \eqref{dec.2bis}, however, is \emph{not optimal}, and is rather expected to hold for $\alpha=2$ (cf.~\cite{FO} for $\beta \ll 1$,
and \cite{GO16} under the assumption of finite range of dependence),  whereas 
\eqref{dec.2bis} yields $\alpha=1$ for $\beta \ge d$, and deteriorates as $\beta$ gets smaller.
This result is the first step for the full quantitative CLT obtained in \cite{DGO,DFG}.

\medskip

We then turn to the growth of the extended corrector.
\begin{theorem}\label{t2}
Assume that $\langle\cdot\rangle$ is stationary and satisfies the multiscale logarithmic Sobolev inequality \eqref{e:def-MLSI}
for $\pi(\ell)= (\ell+1)^{-\beta-1}$ and $\beta>0$.
Then, there exists an almost surely finite  (non-stationary) random field $x\mapsto \calC_x$  such that
the extended corrector $(\phi,\sigma)$ of Definition~\ref{si} satisfies for all $x\in \R^d$
\begin{equation}\label{eq:pr-corr-4}
\Big(\fint_{B(x)} |(\phi,\sigma)|^2\Big)^{\frac{1}{2}}
\,\le\, \Big|\fint_{B} (\phi,\sigma)\Big|+ \calC_x\mu_{*}(|x|),
\end{equation}
where $B(x)$ denotes the unit ball centered at $x$  (and $B=B(0)$),   
{\color{black}
  \begin{equation}\label{e.def-Gdbeta}
   \mu_{*}(r)\,:=\, 
    \left.
      \begin{cases}
        (r+1)^{1-\frac{\beta}{2}}&\text{for }\beta<2,\\
        \log^{\frac12}(r+2)&\text{for }\beta=2,d>2 \text{ or }\beta>2,d=2,\\
        \log (r+2)&\text{for }\beta= d=2,\\
        1&\text{for }\beta>2, d>2,
      \end{cases}\right.
    \end{equation}
}
and $\calC_x$ has the following stochastic integrability (uniformly with respect to $x\in \R^d$)
\begin{equation}\label{dec.2ter}
 \expec{\exp(\frac1{C} \calC_{x}^\alpha)}\,\le\,2
\end{equation}
for some constant $C=C(d,\lambda)$ and the exponent  $\alpha=\frac{2(\beta \wedge d)}{2(d-1)+\beta \wedge d}$.

\smallskip

In particular, for $\beta>2$ and $d>2$, this implies the existence and uniqueness of a stationary extended corrector $(\overline \phi,\overline \sigma)$ with vanishing expectation and finite second moment that solves \eqref{f.2}--\eqref{si.5}, and which satisfies the following version of \eqref{eq:pr-corr-4}:  
There exists a random variable $\calC$ with the stochastic integrability \eqref{dec.2bis} such that
\begin{equation*} 
\Big(\fint_{B} |(\overline \phi,\overline \sigma)|^2\Big)^{\frac{1}{2}}
\,\leq\, \calC.
\end{equation*}
\qed
\end{theorem}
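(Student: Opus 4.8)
The plan is to split the left-hand side of \eqref{eq:pr-corr-4} as
\[
\Big(\fint_{B(x)}|(\phi,\sigma)|^2\Big)^{1/2}\ \le\ \Big(\fint_{B(x)}\big|(\phi,\sigma)-\textstyle\fint_{B(x)}(\phi,\sigma)\big|^2\Big)^{1/2}+\Big|\fint_{B(x)}(\phi,\sigma)-\fint_{B}(\phi,\sigma)\Big|+\Big|\fint_{B}(\phi,\sigma)\Big|,
\]
so that, the last term being already on the right-hand side of \eqref{eq:pr-corr-4}, it suffices to bound the first two. The first is \emph{local}: by Poincar\'e's inequality it is $\lesssim\big(\fint_{B_2(x)}|\nabla(\phi,\sigma)|^2\big)^{1/2}$, whose stretched-exponential moments, \emph{uniform in} $x$ by stationarity of $\nabla(\phi,\sigma)$, follow from the multiscale LSI applied to this compactly supported functional (together with the corresponding input for the $\sigma$-part); since $\mu_{*}\ge c>0$, this term is absorbed into $\calC_x\mu_{*}(|x|)$. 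The bulk of the work is the second, \emph{growth}, term $F:=\fint_{B(x)}(\phi,\sigma)-\fint_B(\phi,\sigma)$, which by stationarity and $\langle\nabla(\phi,\sigma)\rangle=0$ is a deterministic linear average of the corrector gradient with $\langle F\rangle=0$.

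\medskip

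For $F$ I would run a sensitivity estimate in the spirit of Theorem~\ref{t1}. Writing $w:=\nabla(N\ast\chi_B)$ with $N$ the Newtonian potential and $\chi_B$ the normalized indicator of $B$, integration by parts gives $\fint_{B(x)}\phi-\fint_B\phi=-\int\nabla\phi\cdot g_x$ with the fixed kernel $g_x(z):=w(z-x)-w(z)$ (and the analogous identity for $\sigma$, with $-\triangle$ in place of $-\nabla\cdot a\nabla$). Differentiating $-\nabla\cdot a(\nabla\phi+e)=0$ with respect to $a$ and testing with the dual field $u^*_x$ solving $-\nabla\cdot a^*\nabla u^*_x=\nabla\cdot g_x$, one gets the deterministic sensitivity bound
\[
|\partial^{\fun}_{y,\ell}F|\ \lesssim\ \ell^{d}\Big(\fint_{B_\ell(y)}|\nabla\phi+e|^2\Big)^{1/2}\Big(\fint_{B_\ell(y)}|\nabla u^*_x|^2\Big)^{1/2}
\]
(plus a term from $\sigma$ of the same structure). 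Inserting this into \eqref{e:def-MLSI}, bounding the corrector prefactor by its uniform-in-$x$ moments, and using the large-scale Schauder theory of \cite{GNO-reg} to control $u^*_x$ and its $\ell$-mollifications — quantifying both the decay of their energy away from the two ``sources'' $0$ and $x$ and the fact that these sources are a distance $|x|$ apart — reduces the estimate of $\langle F^2\rangle$ to an explicit weighted scale integral of the form $\int_1^\infty\ell^{d}\pi(\ell)\,e_\ell(|x|)\,d\ell$, which for $\pi(\ell)=(\ell+1)^{-\beta-1}$ evaluates to $\mu_{*}(|x|)^2$; the powers of logarithm in \eqref{e.def-Gdbeta} are exactly the borderline (non)convergence of this integral at $\beta=2$ and at $d=2$. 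Equivalently one can reconstruct $F$ by telescoping $\fint_{B(x)}-\fint_B$ through mollifications at dyadic scales $2\le 2^k\lesssim|x|$ and apply Theorem~\ref{t1} to each dyadic averaging kernel; the crucial point — and the reason the direct route is cleaner — is that the contributions of the various scales must be combined in $\ell^2$ (which is built into the $L^2$-form of the multiscale LSI), whereas a naive triangle inequality over scales would overshoot the critical power of logarithm by $\log^{1/2}$.

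\medskip

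The stochastic integrability \eqref{dec.2ter} then follows from concentration of measure (Herbst's argument) for $F$: the LSI yields a differential inequality for the moments of $F$, but since the prefactor $\fint_{B_\ell(y)}|\nabla\phi+e|^2$ is itself only stretched-exponentially integrable rather than bounded, the sub-Gaussian conclusion is replaced by a stretched-exponential one, and carrying the exponents through the iteration produces $\alpha=\tfrac{2(\beta\wedge d)}{2(d-1)+\beta\wedge d}$. Combining the bound on $F$ with the local term, using $x$-translation covariance for uniformity in $x$, gives \eqref{eq:pr-corr-4}--\eqref{dec.2ter}. For the last assertion, when $\beta>2$ and $d>2$ one has $\mu_{*}\equiv1$, so \eqref{eq:pr-corr-4} says the non-stationary corrector grows only $O(1)$ stochastically; I would then obtain the stationary corrector as the $L^2(\langle\cdot\rangle)$-limit, as $\mu\downarrow0$, of the zeroth-order regularizations $(\overline\phi_\mu,\overline\sigma_\mu)$ solving $\mu\overline\phi_\mu-\nabla\cdot a(\nabla\overline\phi_\mu+e)=0$ (and the corresponding equation for $\sigma$), which are automatically stationary: the same sensitivity/LSI machinery, now with the massive Green function on scale $\mu^{-1/2}$, gives $\langle\exp(\tfrac1C|\overline\phi_\mu(0)|^{\alpha})\rangle\le2$ uniformly in $\mu$ together with $L^2(\langle\cdot\rangle)$-Cauchyness, hence convergence. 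The limit $(\overline\phi,\overline\sigma)$ is stationary, has finite second moment, solves \eqref{f.2}--\eqref{si.5}, and after normalizing $\langle\overline\phi\rangle=0$ satisfies $\nabla\overline\phi=\nabla\phi$ by the uniqueness in Definition~\ref{si}; uniqueness among stationary, mean-zero, finite-second-moment solutions holds because the gradient of the difference of two such vanishes, so the difference is shift-invariant and hence $\langle\cdot\rangle$-a.s.\ equal to its (vanishing) expectation by ergodicity. Finally $\big(\fint_B|(\overline\phi,\overline\sigma)|^2\big)^{1/2}$ inherits the claimed integrability from $\langle\big(\fint_B|\overline\phi|^2\big)^{p/2}\rangle\le\langle|\overline\phi(0)|^p\rangle$ (stationarity) and, by Fatou, the uniform exponential moments of $\overline\phi_\mu(0)$.

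\medskip

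The main obstacle is the evaluation pinning down the exact power of logarithm: it requires a sufficiently sharp large-scale Schauder estimate on the dual field $u^*_x$ — quantifying how its energy decays away from each source and how the interaction of the two sources at distance $|x|$ is felt — so that the weighted scale integral $\int_1^\infty\ell^{d}\pi(\ell)\,e_\ell(|x|)\,d\ell$ sits precisely at the borderline at $\beta=2$ (and $d=2$); and it requires combining the multiscale contributions in $\ell^2$ rather than $\ell^1$, which is where the $L^2$-structure of the multiscale LSI is essential. A secondary difficulty is propagating stretched-exponential stochastic integrability through the argument, since the LSI-based concentration only delivers a \emph{random} (stretched-exponentially integrable) prefactor, and bookkeeping the resulting exponent $\alpha$.
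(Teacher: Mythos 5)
Your overall strategy is the one the paper follows: split off the mean via Poincar\'e, represent the growth increment of $(\phi,\sigma)$ as an average of $\nabla(\phi,\sigma)$ against a potential-theoretic kernel, estimate its sensitivity through the dual equation and the large-scale regularity of \cite{GNO-reg}, conclude by LSI concentration, and finally build the stationary corrector (for $\beta>2$, $d>2$) from the massive approximation. The differences are in the bookkeeping. You use a single two-source kernel $g_x=w(\cdot-x)-w$ comparing the two \emph{unit} balls, whereas the paper first passes to balls of radius $r=|x|+1$ and splits the increment into $F_1=\fint_{B_r(x)}-\fint_{B_r}$ and $F_2=\fint_{B(\cdot)}-\fint_{B_r(\cdot)}$, precisely so that the two resulting kernels fall into the two classes of Proposition~\ref{s.bis} and the PDE input reduces to Lemmas~\ref{LEas}--\ref{LEap}; your kernel is covered by neither class (near the sources it decays like $(1+|\cdot|)^{1-d}$, far away like a dipole of strength $|x|$, and is not compactly supported), so the ``two-source'' Schauder estimate you flag as the main obstacle is exactly the content the paper manufactures by this splitting --- your route is viable but you would have to redo a combined version of Lemmas~\ref{LEao} and~\ref{LEap}. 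Your remark that the scales must be summed in $\ell^2$ inside the carr\'e-du-champ (rather than by a triangle inequality over dyadic scales) is correct and is indeed where the critical powers of $\log$ in \eqref{e.def-Gdbeta} come from. For the stationary corrector you propose uniform moment bounds on $\overline\phi_\mu$ obtained directly from the sensitivity/LSI machinery with the massive Green function plus an $L^2$-Cauchy argument; the paper's Step~5 is lighter: it only compares $\phi_T$ with the already-controlled non-stationary $\phi$ through an exponentially weighted energy estimate and concludes by weak compactness, with uniqueness by ergodicity exactly as you say.

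The one place where your sketch does not actually deliver the stated conclusion is the exponent $\alpha=\frac{2(\beta\wedge d)}{2(d-1)+\beta\wedge d}$ in \eqref{dec.2ter}. Herbst/LSI concentration applied to the increment produces a moment bound whose right-hand side carries the random prefactor $\big(\frac{r_*(0)}{r}\vee1\big)^{d}$ (and, after taking $\langle r_*^{dq}\rangle$-moments, terms of the form $r^{-d/2}q^{\frac{d}{2(\beta\wedge d)}}$, cf.\ \eqref{Aug6.6a}); uniformly in $x$ this alone gives a strictly worse exponent, because for moments $q$ large compared with $(1+|x|)^{\beta\wedge d}$ these terms dominate. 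The paper removes them by interpolating with a crude high-moment bound in that regime --- obtained from the elementary representation $|F|\lesssim r\int_0^1\fint_{B_r}|\nabla(\phi,\sigma)|(tx+\cdot)$ (resp.\ from $\int|g_2|\lesssim r$) together with the mean-value property and the moments of $r_*$, cf.\ \eqref{e.stF1b-comp}, \eqref{e.assump-Jensen} and \eqref{Aug6.6b} --- and only the crossover of the two bounds at $q\sim r^{\beta\wedge d}$ yields the claimed $\alpha$. Your phrase ``carrying the exponents through the iteration produces $\alpha$'' glosses over this step; as written, the concentration argument with the stretched-exponentially integrable prefactor does not by itself give the advertised uniform-in-$x$ integrability, so this interpolation (or an equivalent device) must be added.
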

For $\beta\le d$, the scalings in \eqref{eq:pr-corr-4} \& \eqref{e.def-Gdbeta} are new (besides the range $\beta\ll 1$ in \cite{FO}).
Note that the definition \eqref{e.def-Gdbeta} of $\mu_*$ involves two critical behaviors: a critical behavior for $\beta=2$ due to the randomness, and a critical behavior for $d=2$ similar to that of the Gaussian free field structure (and due to the deterministic behavior of the Helmholtz projection).  
As the formal expansion in small ellipticity contrast in Appendix~\ref{append} suggests, these two critical behaviors are unavoidable.
\begin{remark}
The random constant $\calC$ in Theorem~\ref{t2} can be chosen so that it satisfies
$\calC_x \le C(d) \inf_{y \in B(x)} \tilde \calC_y$ for all $x\in \R^d$, a universal constant $C(d)$ depending only on the dimension, and another random field $\tilde \calC$ that satisfies the same 
moment bounds as $\calC$ (with a different constant).
Indeed, one can take $\tilde \calC$ to be the random field $\calC$ associated with the rescaled field $a(2\cdot)$.  
This is used in \cite{DGO}.
\qed
\end{remark}

\medskip

Our last result is a quantitative two-scale expansion.
\begin{theorem}\label{t3} 
Assume that $\langle\cdot\rangle$ is stationary and satisfies the multiscale logarithmic Sobolev inequality \eqref{e:def-MLSI}
for $\pi(\ell)= (\ell+1)^{-\beta-1}$ and $\beta>0$.  Let $\phi$ denote the unique corrector of Definition~\ref{si} satisfying $\fint_B \phi=0$.
Let $g\in L^2(\R^d)^d$, and for all $\e>0$ let $u_\e$ and $u_\ho$ be the Lax-Milgram solutions (in $\dot H^1(\R^d)=\{v \in H^1_\loc(\R^d)\,|\, \nabla v \in L^2(\R^d)\}/\R$) of 
\begin{equation}\label{e.t3-1}
-\nabla \cdot a(\tfrac\cdot \e)\nabla u_\e\,=\,\nabla \cdot g, \quad -\nabla \cdot a_\ho\nabla u_\ho\,=\,\nabla \cdot g.
\end{equation}
Consider the two-scale expansion error $z_\e:=u_\e-(u_{\ho,\e}+\e \phi_i(\frac\cdot \e)\partial_i u_{\ho,\e})$,
where $u_{\ho,\e}$ is a simple moving average of $u_\ho$ at scale $\e$, i.~e. $u_{\ho,\e}(x)=\fint_{B_\e(x)} u_\ho$.
Then
\begin{equation}\label{e.t3-4}
\bigg(\int  |\nabla z_\e |^2 \bigg)^\frac 12 \,\le \, \calC_{\e,g} \,
\e\mu_{*}(\tfrac{1}{\e}) \bigg(\int\mu_{*}^2 |\nabla g|^2 \bigg)^\frac 12,
\end{equation}
where  $\mu_{*}$ is defined in Theorem~\ref{t2} and  $\calC_{\e,g}$ (cf.~ \eqref{e.twoscale.constant} in the proof) denotes a random variable that has the stochastic integrability \eqref{dec.2ter}
(uniformly in $\e$ and $g$).
\qed
\end{theorem}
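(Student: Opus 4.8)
The plan is to run the standard energy estimate for the two-scale expansion error $z_\e$, in the spirit of \cite{Gloria-Neukamm-Otto-11}, but with all corrector growth controlled through the random field $\calC_x$ and the deterministic profile $\mu_*$ of Theorem~\ref{t2}. First I would write the equation solved by $z_\e$. Since $-\nabla\cdot a(\tfrac\cdot\e)\nabla u_\e=\nabla\cdot g$ and $-\nabla\cdot a_\ho\nabla u_\ho=\nabla\cdot g$, a direct computation using the corrector equations \eqref{f.2} and the flux-corrector relation \eqref{f.5} gives
\begin{equation*}
-\nabla\cdot a(\tfrac\cdot\e)\nabla z_\e \,=\, \nabla\cdot\big(h_\e\big),
\end{equation*}
where $h_\e$ is (schematically) a sum of terms of the form $\big(a(\tfrac\cdot\e)\phi_i(\tfrac\cdot\e)-\sigma_i(\tfrac\cdot\e)\big)\nabla\partial_i u_{\ho,\e}$ plus commutator terms coming from the difference between $u_\ho$ and its mollification $u_{\ho,\e}$ and from the fact that $\partial_i u_{\ho,\e}$ is not exactly $\partial_i u_\ho$. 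The skew-symmetry $\sigma_{ijk}=-\sigma_{ikj}$ is what makes the leading term enter in divergence form. Testing with $z_\e$ itself and using uniform ellipticity \eqref{f.40} yields
\begin{equation*}
\lambda\int|\nabla z_\e|^2 \,\le\, \int |h_\e|\,|\nabla z_\e| \,\le\, \Big(\int|h_\e|^2\Big)^{\frac12}\Big(\int|\nabla z_\e|^2\Big)^{\frac12},
\end{equation*}
so it remains to bound $\int|h_\e|^2$ by the right-hand side of \eqref{e.t3-4}.

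Next I would estimate $\int|h_\e|^2$. The dangerous factor is $|(\phi,\sigma)(\tfrac\cdot\e)|$, which is not stationary; here Theorem~\ref{t2} enters. Applying \eqref{eq:pr-corr-4} on unit balls and rescaling, one gets $|(\phi,\sigma)(\tfrac x\e)|\lesssim (|\fint_B(\phi,\sigma)| + \calC_{x/\e})\,\mu_*(\tfrac{|x|}\e)$ in an averaged sense; after normalizing $\fint_B\phi=0$ (as in the hypothesis) and absorbing $\sigma$'s additive constant, this is $\lesssim \calC_{x/\e}\,\mu_*(\tfrac{|x|}\e)$. Since $\e\mu_*(\tfrac{|x|}\e)\lesssim \e\mu_*(\tfrac1\e)\,\mu_*(|x|)$ by the explicit form of $\mu_*$ in \eqref{e.def-Gdbeta} (each branch is, up to constants, submultiplicative in this way, with the $\log$ and power cases checked separately), the prefactor $\e\mu_*(\tfrac1\e)$ factors out and one is left with $\int \mu_*^2(|x|)\,|\nabla\partial_i u_{\ho,\e}|^2\,\cdot(\text{local average of }\calC_{x/\e}^2)$. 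One then replaces $\nabla\partial_i u_{\ho,\e}$ by $|\nabla g|$ via elliptic regularity for the constant-coefficient operator $-\nabla\cdot a_\ho\nabla$ — this is where the mollification at scale $\e$ is used, so that $\nabla^2 u_{\ho,\e}$ is controlled by $\nabla g$ mollified, rather than requiring $u_\ho\in H^2$ — together with the $\mu_*^2$-weighted Calder\'on–Zygmund/Meyer-type bound to keep the weight $\mu_*^2$ on the right-hand side. The commutator terms involving $u_\ho-u_{\ho,\e}$ and $\nabla u_{\ho,\e}-\nabla u_\ho$ are lower-order: they carry an extra factor $\e$ (without the $\mu_*(\tfrac1\e)$), and are handled by the same scheme, noting $\e\mu_*(\tfrac1\e)\gtrsim\e$.

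Finally, the stochastic integrability. Define $\calC_{\e,g}$ as in \eqref{e.twoscale.constant} to be essentially a $\mu_*^2|\nabla g|^2$-weighted quadratic average of $\calC_{x/\e}$,
\begin{equation*}
\calC_{\e,g}^2 \,\sim\, \frac{\int \mu_*^2(|x|)\,|\nabla g(x)|^2\,(\text{local average of }\calC_{x/\e}^2)\,dx}{\int\mu_*^2(|x|)\,|\nabla g(x)|^2\,dx}.
\end{equation*}
Since each $\calC_{x/\e}$ satisfies the bound \eqref{dec.2ter} with exponent $\alpha=\frac{2(\beta\wedge d)}{2(d-1)+\beta\wedge d}$ uniformly in $x$, a convexity (Jensen) argument for the Orlicz norm associated to $\exp(t^{\alpha})$ — using that a weighted average over a probability measure does not enlarge this norm when $\alpha\le1$ — transfers the same stochastic integrability to $\calC_{\e,g}$, uniformly in $\e$ and $g$. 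The main obstacle, and the step requiring care, is the deterministic corrector-growth input: one must make the passage from the averaged bound \eqref{eq:pr-corr-4} (valid on unit balls) to a pointwise-in-$x$ weight $\mu_*(\tfrac{|x|}\e)$ inside the integral $\int|h_\e|^2$ genuinely rigorous, controlling the mismatch between $\nabla u_{\ho,\e}$ and $\nabla u_\ho$ and keeping the $\mu_*^2$ weight on the final right-hand side without losing powers — in short, proving the weighted elliptic regularity estimate for $-\nabla\cdot a_\ho\nabla$ against the (possibly slowly growing for $\beta<2$) weight $\mu_*$.
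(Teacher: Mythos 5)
Your proposal is correct and follows, in substance, the paper's own argument: the same representation formula $-\nabla\cdot a(\tfrac\cdot\e)\nabla z_\e=\nabla\cdot\big(g-g_\e+\e(a\phi_i-\sigma_i)(\tfrac\cdot\e)\nabla\partial_iu_{\ho,\e}\big)$ obtained from \eqref{f.2}, \eqref{f.5} and the skew-symmetry of $\sigma$, the corrector growth of Theorem~\ref{t2} combined with the submultiplicativity $\mu_*(\tfrac{|x|}\e)\lesssim\mu_*(\tfrac1\e)\mu_*(|x|)$, the weighted Calder\'on--Zygmund estimate for the constant-coefficient operator $-\nabla\cdot a_\ho\nabla$ to pass from $\mu_*^2|\nabla^2u_\ho|^2$ to $\mu_*^2|\nabla g|^2$, and a Jensen-type argument in probability for the ratio defining $\calC_{\e,g}$, exactly as in \eqref{e.twoscale.constant}. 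The only genuine difference of route is that you prove the $L^2$ statement directly by testing the equation with $z_\e$ (plain energy estimate), whereas the paper derives Theorem~\ref{t3} as the $p=2$ case of Proposition~\ref{p3}, whose proof runs through the large-scale Calder\'on--Zygmund estimate \eqref{I1-no-weight} on the balls $B_*(x)$; at $p=2$ that machinery collapses, via \eqref{e.equiv-B*}, to your energy estimate, so for Theorem~\ref{t3} alone your shortcut loses nothing, but it does not yield the $L^p$ version \eqref{e.t3-2}--\eqref{e.t3-3}. Two points you flag as delicate are resolved in the paper exactly along the lines you sketch and deserve to be made explicit: the passage from the unit-ball averaged bound \eqref{eq:pr-corr-4} to the integral $\int|h_\e|^2$ works because the Steklov average gives $|\nabla^2u_{\ho,\e}|^2(y)\le\fint_{B_\e(y)}|\nabla^2u_\ho|^2$, so that after Fubini the corrector only ever enters through local averages $\fint_{B_\e(y)}|(\phi,\sigma)(\tfrac\cdot\e)|^2$, to which Theorem~\ref{t2} applies verbatim; and the weighted estimate for $-\nabla\cdot a_\ho\nabla$ with weight $\mu_*^2$ requires checking a Muckenhoupt-type condition, which holds since the exponent $2(1-\tfrac{\beta\wedge2}2)$ of the weight lies strictly between the admissible bounds (the paper records this as $p(1-\frac{2\wedge\beta}{2})<p<d(p-1)$).
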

\begin{remark}
Let us comment on the scalings of the estimate  \eqref{e.t3-4}:
The error $z_\e$ satisfies the equation
\begin{equation*}
-\nabla \cdot a(\tfrac \cdot \e) \nabla z_\e \,=\,\nabla \cdot \big((g-g_\e)+\e(a\phi_i-\sigma_i)(\tfrac\cdot \e) \nabla \partial_i u_{\ho,\e}\big)
\end{equation*}
(where $g_\e$ is the simple moving average of $g$ at scale $\e$) and the scalings come from the control of $|(\phi,\sigma)(\frac x\e)|$ by $\mu_*(\frac{x}{\e})\lesssim\mu_{*}(\frac1\e)\mu_*(x)$ using Theorem~\ref{t2}.
\qed
\end{remark}
This result follows from the following more general quantitative two-scale expansion  in $L^p(\R^d)$ for all $1<p<\infty$.
\begin{proposition}\label{p3}
In the setting of Theorem~\ref{t3}, let $r_*$ be the minimal radius (i.~e.~the stationary random field recalled in Lemma~\ref{lem:reg} below),
and set $B_{*,\e}(x):=B_{\e r_*(\tfrac x\e)}(x)$.
Then, for all $\e>0$ and $1<p<\infty$ we have
\begin{multline}\label{e.t3-2}
\bigg(\int \Big(\fint_{B_{*,\e}(x)} |\nabla z_\e|^2\Big)^\frac p2 dx\bigg)^\frac1p \,\lesssim\, \e\bigg( \int \Big(\fint_{B_{*,\e} (x)} |\nabla g|^2\Big)^\frac p2 dx\bigg)^\frac1p
\\
+\e\mu_{*}(\tfrac{1}{\e}) \bigg(\int \Big(\fint_{B_{*,\e}(x)}\calC^2_{\frac y\e}
\mu_{*}^2 |\nabla^2 u_{\ho}|^2(y)dy\Big)^\frac p2 dx\bigg)^\frac1p,
\end{multline}
where the multiplicative constant in \eqref{e.t3-2} depends on $\lambda,d$, and $p$.
In the range $p\ge 2$,  \eqref{e.t3-2} reduces to
\begin{equation}\label{e.t3-3}
\bigg(\int \Big(\fint_{B_{*,\e}(x)}|\nabla z_\e |^2\Big)^\frac p2 \bigg)^\frac 1p \,\le \, 
\calC_{\e,g,p} \,
\e\mu_{*}(\tfrac{1}{\e}) \bigg(\int\mu_{*}^p|\nabla g|^p \bigg)^\frac 1p,
\end{equation}
where $\calC_{\e,g,p}$ (cf.~ \eqref{e.twoscale.constant} in the proof) denotes a random variable that has the stochastic integrability \eqref{dec.2ter}
(uniformly in $\e$ and $g$ when $p$ is fixed).
\qed
\end{proposition}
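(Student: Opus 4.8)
\textbf{Proof proposal for Proposition~\ref{p3}.}

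The plan is to derive the $L^p$-version of the two-scale expansion error estimate by combining the equation satisfied by $z_\e$ with the large-scale $L^p$ regularity (Calder\'on--Zygmund) theory for the heterogeneous operator $-\nabla\cdot a(\tfrac\cdot\e)\nabla$, as encoded by the minimal radius $r_*$ recalled in Lemma~\ref{lem:reg}. First I would recall, as noted in the remark following Theorem~\ref{t3}, that $z_\e$ solves
\begin{equation*}
-\nabla\cdot a(\tfrac\cdot\e)\nabla z_\e \,=\, \nabla\cdot\big((g-g_\e)+\e(a\phi_i-\sigma_i)(\tfrac\cdot\e)\nabla\partial_i u_{\ho,\e}\big),
\end{equation*}
where the skew-symmetry of $\sigma$ (Definition~\ref{si}) is what turns the naive right-hand side, which would only be bounded in $H^{-1}$, into a genuine divergence-form right-hand side with an $L^2_\loc$ flux. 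The first term $g-g_\e$ is the commutator between the original flux and the moving-average operator at scale $\e$; it is controlled pointwise (after averaging on $B_{*,\e}(x)$) by $\e$ times a local average of $|\nabla g|$, which produces the first term on the right-hand side of \eqref{e.t3-2}. The second term is the genuinely homogenization-theoretic one: it is $\e$ times $(a\phi_i-\sigma_i)(\tfrac\cdot\e)$ against $\nabla\partial_i u_{\ho,\e}$, and here one uses $|\nabla u_{\ho,\e}|\lesssim$ moving average of $|\nabla^2 u_\ho|$ together with the corrector growth bound \eqref{eq:pr-corr-4} of Theorem~\ref{t2}, in the submultiplicative form $\mu_*(\tfrac x\e)\lesssim\mu_*(\tfrac1\e)\mu_*(x)$, to extract the factor $\e\mu_*(\tfrac1\e)$ and the weight $\calC_{y/\e}\mu_*$ inside the integral.

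The key analytic input is the large-scale $L^p$ estimate: for a solution $w$ of $-\nabla\cdot a(\tfrac\cdot\e)\nabla w=\nabla\cdot h$, one has
\begin{equation*}
\bigg(\int\Big(\fint_{B_{*,\e}(x)}|\nabla w|^2\Big)^\frac p2 dx\bigg)^\frac1p \,\lesssim\, \bigg(\int\Big(\fint_{B_{*,\e}(x)}|h|^2\Big)^\frac p2 dx\bigg)^\frac1p,
\end{equation*}
which is the $\e$-rescaled form of the Calder\'on--Zygmund-type estimate from \cite{GNO-reg}, valid on dyadic scales above the minimal radius $r_*$; below that scale the plain energy estimate and Meyers' estimate suffice. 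Applying this with $w=z_\e$ and $h$ the sum of the two fluxes above, and then estimating each of the two resulting terms by the pointwise bounds described in the previous paragraph, yields \eqref{e.t3-2}. For the reduction to \eqref{e.t3-3} in the range $p\ge2$, I would use that for $p\ge2$ one can replace the local $L^2$-average $\fint_{B_{*,\e}(x)}|\nabla g|^2$ under the outer $L^{p/2}$-integral by a pointwise $|\nabla g|^p$ (up to a maximal-function bound, using $p/2\ge1$ and the uniform-in-$\e$ finiteness of moments of $r_*$), and absorb the moving-average and maximal-function operators; the random prefactor $\calC_{\e,g,p}$ is then assembled from suprema/averages of $\calC_{y/\e}$ and powers of $r_*$, and it inherits the stochastic integrability \eqref{dec.2ter} from Theorem~\ref{t2} together with the (stretched-exponential) integrability of $r_*$ from Lemma~\ref{lem:reg}, via a deterministic Hölder/Young bookkeeping of the exponents.

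The main obstacle I expect is the bookkeeping needed to pass from the intrinsic, $r_*$-adapted averages $\fint_{B_{*,\e}(x)}$ appearing naturally in the regularity theory to the cleaner statements \eqref{e.t3-3} and ultimately \eqref{e.t3-4}: one must control the discrepancy between averaging on $B_{*,\e}(x)$ and on the fixed ball $B_\e(x)$, handle the interplay of the random radius $r_*(\tfrac x\e)$ with the deterministic weight $\mu_*$, and track how the stochastic integrability exponent degrades when several random quantities ($\calC_{y/\e}$, $r_*$, and the maximal function thereof) are multiplied. A secondary technical point is the justification of the commutator estimate $|g-g_\e|\lesssim\e\,\fint_{B_{*,\e}(x)}|\nabla g|$ with the correct averaging radius, which requires a Poincar\'e-type argument on balls of size comparable to $\e r_*$ rather than size $\e$, hence again uses the minimal radius. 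Once these deterministic reductions are in place, the passage to \eqref{e.t3-4} in Theorem~\ref{t3} is the special case $p=2$ of \eqref{e.t3-3} combined with $\|\nabla^2 u_\ho\|_{L^2}\lesssim\|\nabla g\|_{L^2}$ (Calder\'on--Zygmund for the constant-coefficient operator $a_\ho$) and the submultiplicativity of $\mu_*$.
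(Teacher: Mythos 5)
Your outline reproduces, for the deterministic part, exactly the paper's own route: the representation formula for $z_\e$ via the extended corrector (using the skew-symmetry of $\sigma$), the $\e$-rescaled large-scale Calder\'on--Zygmund estimate \eqref{I1-no-weight} of Lemma~\ref{lem:reg}(c) applied with the flux $(g-g_\e)+\e(a\phi_i-\sigma_i)(\tfrac\cdot\e)\nabla\partial_iu_{\ho,\e}$, Poincar\'e on unit balls for $g-g_\e$, the moving-average bound $|\nabla^2u_{\ho,\e}|^2(y)\le\fint_{B_\e(y)}|\nabla^2u_\ho|^2$, and Theorem~\ref{t2} in the form $\mu_*(\tfrac{|x|}\e)\lesssim\mu_*(\tfrac1\e)\mu_*(x)$; this yields \eqref{e.t3-2}. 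The passage to pointwise integrands for $p>2$ via the maximal function is also the paper's argument; note only that $\calM$ is not bounded on $L^1$, so the endpoint $p=2$ must be handled by \eqref{e.equiv-B*} (the paper then interpolates between $p=2$ and $p\ge4$), not by ``$p/2\ge1$'', and no moments of $r_*$ enter this purely deterministic reduction since $B_{*,\e}(x)$ is centered at $x$.

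The genuine weak point is the only probabilistic step, the moment bound on $\calC_{\e,g,p}$, which you leave as ``H\"older/Young bookkeeping''. Two concrete issues. First, \eqref{e.t3-3} has $\int\mu_*^p|\nabla g|^p$ on the right, whereas the deterministic estimate produces $\int\calC^p_{\cdot/\e}\mu_*^p|\nabla^2u_\ho|^p$; to trade $\nabla^2u_\ho$ for $\nabla g$ with the weight intact one needs a \emph{weighted} Calder\'on--Zygmund estimate for the constant-coefficient operator $-\nabla\cdot a_\ho\nabla$ with weight $\mu_*^p$ (admissible precisely because $p(1-\frac{2\wedge\beta}{2})<p<d(p-1)$); your unweighted $\|\nabla^2u_\ho\|_{L^2}\lesssim\|\nabla g\|_{L^2}$ is not enough, since already \eqref{e.t3-4} carries $\mu_*^2$ on both sides. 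Second, assembling the prefactor from ``suprema of $\calC_{y/\e}$'' would fail: $\calC_x$ satisfies \eqref{dec.2ter} only pointwise-uniformly in $x$, so a supremum over an unbounded region is not controlled by these moments. The paper instead \emph{defines} $\calC_{\e,g,p}$ as the ratio \eqref{e.twoscale.constant} with deterministic denominator and bounds $\expec{\calC_{\e,g,p}^q}^{1/q}$ by pulling the expectation inside the spatial integral via Jensen's inequality in probability, so that only averages of $\expec{\calC^q}^{p/q}$ against the deterministic density $\mu_*^p|\nabla^2u_\ho|^p$ appear; combined with the weighted estimate above this gives \eqref{dec.2ter} uniformly in $\e$ and $g$, with no extra powers of $r_*$ entering at this stage.
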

Theorem~\ref{t3} and Proposition~\ref{p3} extend our previous result \cite{Gloria-Neukamm-Otto-11}, which establishes a restricted version of the estimate of the two-scale expansion error for scalar discrete elliptic equations on the torus for $\beta>d$ and $p=2$. See also \cite{BMN-17} for a similar result for scalar (possibly non-symmetric) discrete elliptic equations on $\Z^d$ for $\beta>d$ and $d\geq 3$.
We have chosen to display the result using local moving averages of $u_\ho$ (also called Steklov averaging in this context) 
in order to treat the case of systems under minimal regularity assumptions on $g$. This is however not needed in the following cases:
\begin{itemize}
\item For scalar equations by the De Giorgi-Nash-Moser theory (which allows to upgrade bounds on local averages of the corrector to pointwise bounds);
\item For systems with smooth coefficients by the classical Schauder theory;
\item For general systems by Sobolev embedding at the price of giving up a bit on the norms of $g$ in the RHS;
\item For general systems and general $g$ if one only considers the $L^2$-norm in space and probability.
\end{itemize}
\begin{remark}
As will be clear in the proof, one can also consider large-scale \emph{weighted} $L^p$-estimates of $\nabla z_\e$. \qed
\end{remark}
Before we turn to the proofs of these results, let us stress the fact that the approach we develop here is not limited to the case of
multiscale functional inequalities with the functional derivative as considered in \eqref{s.51}.
For ensembles satisfying multiscale functional inequalities with the oscillation, cf.~\cite{DG2} (like random inclusions with random radii and random tessellations of Poisson points or the random parking measure), similar results hold --- with however weaker stochastic integrability.
More precisely, we have:
\begin{theorem}\label{t4} Assume that $\langle\cdot\rangle$ is stationary and satisfies a multiscale logarithmic-Sobolev inequality 
with the oscillation: For all random variables $F$ we have
$$
\Ent(F)\,\le\,\expec{\int_1^\infty \|\partial^{\osc{}{}}F\|^2_\ell\,\pi(\ell)\,d\ell},\qquad \|\partial^{\osc{}{}}F\|^2_\ell:=\ell^{-d}\int\big(\partial^{\osc{}{}}_{\ell,x}{F}\big)^2\,dx,
$$
for some weight $\pi(\ell)\sim \exp(-\frac1C \ell^\beta)$, with
$$
\partial^{\osc{}{}}_{\ell,x}{F}(a)\,:=\,\sup \{ F(a')-F(a'')\,|\,a' =a''=a \mbox{ on }\R^d\setminus B_{\ell}(x)\}.
$$
Then, with the notation $\pi_*(r)=r^d$ and $\mu_*(x)=\log(|x|+2)$ for $d=2$ and $\mu_*(x)=1$ for $d>2$, Theorems~\ref{t1}, \ref{t2}, and~\ref{t3} 
hold for random variables $\{\calC_i\}_{i=1,2,3}$  (possibly depending on $\e$, $r$, $g$, and $x$) satisfying the following stochastic integrability: There is a positive constant $C=C(d,\lambda)$ (independent of $\e$, $r$, $g$, and $x$) such that
for $\alpha_1= \frac{2\beta  (\beta \wedge d) }{2 \beta (\beta \wedge d)+d(\beta+\beta \wedge d)}$
and $\alpha_2=\alpha_3=\frac{2\beta  (\beta \wedge d) }{2 \beta (\beta \wedge d)+d(\beta \wedge d)+2\beta(d-1)}$, we
have
 for all $r\ge 1$, $\e>0$, $x\in \R^d$, and suitable  $g$,
\begin{equation}\label{e.st-int-osc}
\expec{\exp(\frac1{C}\calC_i^{\alpha_i})}\,\le \,2.
\end{equation}
When $\pi$ has compact support, $\alpha_1=\frac23$ and $\alpha_2=\alpha_3=\frac{d}{2d-1}$.
\qed
\end{theorem}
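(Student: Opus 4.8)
The plan is to obtain Theorem~\ref{t4} by revisiting the proofs of Theorems~\ref{t1}--\ref{t3} and tracking exactly where the multiscale LSI \eqref{e:def-MLSI} with the functional derivative $\partial^{\fun}$ is invoked, then substituting the oscillation-based multiscale LSI. The structure of those proofs is: (i) a \emph{deterministic sensitivity estimate} bounding $\partial^{\fun}_{x,\ell}F$ (for $F$ a spatial average of $\nabla(\phi,\sigma)$, or a local average of $(\phi,\sigma)$, or the two-scale error) in terms of solutions of the heterogeneous equation and its dual, using the large-scale Schauder/regularity theory of \cite{GNO-reg}; (ii) feeding this into the LSI to get $\Ent(F^p)$-type bounds, hence concentration of measure; (iii) optimizing over the free integration scale to produce the weight $\pi_*$ or $\mu_*$. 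The first observation is that the \emph{sensitivity} step is essentially insensitive to which derivative one uses: one has the pointwise comparison $|\partial^{\fun}_{\ell,x}F|\lesssim \partial^{\osc{}{}}_{\ell,x}F$ up to a harmless change of $\ell$ (the oscillation dominates the $L^1$-functional derivative when $F$ is Lipschitz in $a$ on $B_\ell(x)$, since the oscillation allows perturbations of the full admissible size rather than infinitesimal ones), so the deterministic estimates carry over verbatim, now controlling $\partial^{\osc{}{}}_{\ell,x}F$ by the same right-hand sides.

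Next I would redo the stochastic integrability bookkeeping. With the oscillation LSI and a weight $\pi(\ell)\sim\exp(-\tfrac1C\ell^\beta)$, the finiteness condition $\int_1^\infty \pi(\ell)\ell^d\,d\ell<\infty$ always holds, so a \emph{standard} LSI (with the oscillation) is always available; this is why $\pi_*(r)=r^d$ and $\mu_*$ reduces to the $\beta>2$, $d>2$ (resp.\ $d=2$) form — the criticality in $\beta$ disappears and only the dimensional criticality $d=2$ (the Helmholtz/Gaussian-free-field effect) survives. The only thing that changes relative to the functional-derivative case is the \emph{exponent} in the stretched-exponential moment bound \eqref{e.st-int-osc}. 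This comes from the interplay between (a) the super-algebraic decay of $\pi$, which forces one to cut the free scale $\ell$ off at something like $\ell\sim(\log\tfrac1{\text{tail}})^{1/\beta}$ rather than polynomially, and (b) the polynomial loss in the sensitivity estimate itself (a gain of $\ell^{-(\beta\wedge d)/2}$ for averages, and the extra $\ell^{d-1}$-type factors from the dual/regularity estimates for the corrector growth). Running the same Chebyshev-in-$L^p$-and-optimize argument as for \eqref{dec.2bis}, \eqref{dec.2ter} but with the oscillation weight produces precisely the stated $\alpha_1=\frac{2\beta(\beta\wedge d)}{2\beta(\beta\wedge d)+d(\beta+\beta\wedge d)}$ and $\alpha_2=\alpha_3=\frac{2\beta(\beta\wedge d)}{2\beta(\beta\wedge d)+d(\beta\wedge d)+2\beta(d-1)}$; sending $\beta\to\infty$ (equivalently taking $\pi$ compactly supported, so the effective cutoff is $O(1)$) recovers $\alpha_1=\tfrac23$ and $\alpha_2=\alpha_3=\tfrac{d}{2d-1}$, which is the sanity check to verify.

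Concretely the steps are: first, state and prove the domination $|\partial^{\fun}_{\ell,x}F|\le \partial^{\osc{}{2\ell}}{}_{,x}F$ (or with a fixed dilation) for the relevant $F$'s, so that every deterministic sensitivity lemma from the proofs of Theorems~\ref{t1}--\ref{t3} applies with $\partial^{\osc{}{}}$ in place of $\partial^{\fun}$; second, invoke the oscillation multiscale LSI on $F^p$ and reproduce the differential-inequality-for-moments argument (Herbst-type) that underlies \eqref{dec.2bis}, \eqref{dec.2ter}, now with the Gaussian-type weight; third, perform the scale optimization, which yields $\pi_*(r)=r^d$ and the claimed $\mu_*$ since the standard LSI is always in force; fourth, read off the modified stretched-exponential exponents by carrying the super-algebraic cutoff through the optimization. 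The main obstacle I anticipate is purely technical rather than conceptual: making the domination between the two derivatives rigorous for the two-scale expansion error $z_\e$ and the non-stationary corrector, where $F$ depends on $a$ through the solution of a PDE and one must be careful that a bounded (not infinitesimal) perturbation of $a$ on $B_\ell(x)$ still produces a controlled change — this requires the a~priori estimates (energy estimates, large-scale regularity, uniqueness up to constants) to be stable under such perturbations, which they are, but the statement has to be set up so that constants do not degenerate. The optimization of exponents is then a routine (if slightly tedious) computation of the same flavour as in the earlier theorems.
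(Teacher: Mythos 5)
Your overall architecture (re-derive the sensitivity estimate for the oscillation, then rerun the concentration and scale-optimization with the Gaussian weight) is the same as the paper's, but the step you base the whole reduction on is logically backwards. The inequality $|\partial^{\fun}_{x,\ell}F|\lesssim\partial^{\osc{}{}}_{\ell,x}F$ controls the \emph{smaller} object by the \emph{larger} one; since the multiscale LSI of Theorem~\ref{t4} has the oscillation on its right-hand side, what you need is an \emph{upper} bound on $\partial^{\osc{}{}}_{\ell,x}F$ itself, and this does not follow ``verbatim'' from the estimates on $\partial^{\fun}_{x,\ell}F$: those are linearizations at $a$, whereas the oscillation compares correctors of coefficient fields differing from $a$ by perturbations of size $O(1)$ on $B_\ell(x)$. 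The paper's Step~1 supplies exactly the missing ingredient: writing $a_{x,\ell}=a+(a'-a)\mathds 1_{B_\ell(x)}$ and $\delta_{x,\ell}\phi=\phi_{x,\ell}-\phi$, the difference solves $-\nabla\cdot a\nabla\delta_{x,\ell}\phi=\nabla\cdot\delta_{x,\ell}a(\nabla\phi_{x,\ell}+e)$, cf.~\eqref{s.1-5osc}, so the duality argument with the dual solution $\nabla v$ produces the \emph{perturbed} corrector gradient on the right-hand side, cf.~\eqref{e.duality-osc}; the genuinely new deterministic step is then the uniform-in-$a'$ energy estimate $\sup_{a'}\int_{B_\ell(x)}|\nabla\phi_{x,\ell}+e|^2\lesssim\int_{B_\ell(x)}|\nabla\phi+e|^2$, obtained from the equivalent equation $-\nabla\cdot a_{x,\ell}\nabla\delta_{x,\ell}\phi=-\nabla\cdot\delta_{x,\ell}a(\nabla\phi+e)$, after which \eqref{e.carre-du-champ-generic} transfers to $\|\partial^{\osc{}{}}F\|_\ell$, cf.~\eqref{e.t4-1}. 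You allude to stability under bounded perturbations as a ``technicality'', but as proposed your reduction does not yield any upper bound on the oscillation, so this is a genuine gap rather than bookkeeping.

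Two further points in the stochastic step are glossed over and are both needed to land on the stated $\alpha_i$. First, the oscillation-based LSI gives \emph{weaker} concentration than the functional-derivative one: the moment inequality used in the paper (\cite[Proposition~3.1(ii)]{DG1}) carries a prefactor $q$ rather than $\sqrt q$, and this is precisely the origin of the leading ``$1$'' in $\alpha_1^{-1}=1+\frac{d}{2\beta}+\frac{d}{2(\beta\wedge d)}$; reproducing the Herbst-type argument underlying \eqref{dec.2bis} with a $\sqrt q$ prefactor would give incorrect (too strong) exponents. Second, the $\beta\wedge d$ in the exponents does not come from the scale cutoff in the weight --- with $\pi(\ell)\sim\exp(-\frac1C\ell^\beta)$ that cutoff only contributes the factor $q^{d/\beta}$ via $\big(\int_1^\infty\ell^{dq}\exp(-\frac1C\ell^\beta)\,d\ell\big)^{1/q}\lesssim q^{d/\beta}$ --- but from the stochastic integrability of the minimal radius under the oscillation assumption, $\expec{\exp(\frac1C r_*^{\beta\wedge d})}\le2$ (\cite[Theorem~4]{GNO-reg}), which yields $\expec{r_*^{dq}}^{1/q}\lesssim q^{d/(\beta\wedge d)}$. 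Your $\beta\to\infty$ sanity check is consistent with the stated exponents, but the optimization you describe only produces them once these two inputs are made explicit.
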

\begin{remark}
Poisson random inclusions of fixed size satisfy the assumptions of Theorem~\ref{t4} with a compactly supported weight, while random Poisson tessellations (Voronoi or Delaunay)
satisfy the  assumptions of Theorem~\ref{t4} for $\beta=d$.
Similar results hold if the multiscale LSI is replaced by a multiscale spectral gap or covariance inequality, as in \cite{GNO-reg}, and therefore
cover all the examples considered in \cite{Torquato-02}, cf.~\cite{DG2,DG1}.
Poisson random inclusions of fixed size are also treated in \cite{GO16} and \cite{AKM2}, in which case \eqref{e.st-int-osc} is proved to hold for all $\alpha <2$.
The advantage of the present approach (although it does not yield optimal stochastic integrability) is that it applies to both Poisson-based models (representative of $\alpha$-mixing statistics) and Gaussian-based models (representative of ``nonlinear mixing statistics'').
\qed
\end{remark}

\subsection{Structure of the proofs and auxiliary results}

We combine the large-scale regularity developed in \cite{GNO-reg}
with the sensitivity calculus provided by the functional inequality.
In the following lemma we recall the large scale regularity results we shall use, namely the mean-value property, and large-scale $C^{1,0}$ and Calder\'on-Zygmund estimates. 
\begin{lemma}\label{lem:reg}
Assume that $\langle\cdot\rangle$ is stationary and satisfies the multiscale logarithmic Sobolev inequality \eqref{e:def-MLSI}
for $\pi(\ell)= (\ell+1)^{-\beta-1}$ and $\beta>0$, and let $\pi_*$ be as in Theorem~\ref{t1}.
There exists a stationary, $\frac 18$-Lipschitz continuous random field $r_*\ge 1$ (the \emph{minimal radius}),  satisfying for some constant $C$,
\begin{equation}\label{e.int-sto-r*}
\expec{\exp\Big(\frac1C \pi_*(r_*)\Big)}\le2,
\end{equation}
such that the following properties hold $\expec{\cdot}$-a.s.,
\begin{enumerate}[(a)]
\item \emph{Mean-value property:} For any $a$-harmonic function $u$ in $B_R$ (that is, $-\nabla \cdot a\nabla u=0$ in $B_R$), we have for all radii $r_*(0)\leq r\leq R$,
\begin{align}\label{eq:mean-value}
\fint_{B_r} |\nabla u|^2 \,\lesssim\, \fint_{B_R}|\nabla u|^2.
\end{align}
\item \emph{Large-scale $C^{1,0}$-estimate:} For any function $u$ and vector fields $g$, $h$ in $B_R$ related via $-\nabla \cdot a(\nabla u+h)=\nabla \cdot g$, 
we have for all $r_*(0)\le r \le R$
\begin{align}\label{eq:SE:2}
 \fint_{B_{r}}|\nabla u+h|^2 \,\lesssim\, 
\fint_{B_{R}}|\nabla u+h|^2 + R^2 \sup_{B_R} ( |\nabla g|^2 +|\nabla h|^2).
\end{align}
\item \emph{Large-scale Calder\'on-Zygmund estimates:} Set $B_*(x):=B_{r_*(x)}(x)$.
For all $1<p<\infty$, for any (sufficiently fast) decaying scalar field $u$ and vector field $g$ related in $\R^d$ by
\begin{equation*} 
-\nabla \cdot a \nabla  u=\nabla \cdot g,
\end{equation*}
we have
\begin{equation}\label{I1-no-weight}
\int  \Big(\fint_{B_*(x)}|\nabla u|^2\Big)^\frac{p}{2} dx\,\lesssim_p\, \int  \Big(\fint_{B_*(x)}|g|^2\Big)^\frac{p}{2} dx.
\end{equation}
\end{enumerate}
\qed
\end{lemma}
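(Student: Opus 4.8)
The lemma records the output of the large-scale regularity theory of \cite{GNO-reg}; here is the route I would follow. The organizing idea is to quantify the \emph{sublinearity of the extended corrector}: define the minimal radius $r_*(x)$ as (a Lipschitz regularization of) the smallest $r\geq 1$ such that
\[
\frac{1}{R^2}\fint_{B_R(x)}\Big|(\phi,\sigma) - \fint_{B_R(x)}(\phi,\sigma)\Big|^2 \,\leq\, \delta \quad\text{for all }R\geq r,
\]
where $\delta=\delta(d,\lambda)>0$ is a threshold fixed below. Qualitative sublinearity of $(\phi,\sigma)$ (Definition~\ref{si}) makes $r_*$ a.s.\ finite, and replacing it by $x\mapsto\sup_y\big(r_*(y)-\tfrac18|x-y|\big)$ enforces the $\tfrac18$-Lipschitz property while only increasing $r_*$, hence preserving all the estimates below.

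\textbf{Deterministic step: Campanato iteration above $r_*$.} For the mean-value property and the $C^{1,0}$-estimate I would run an excess-decay iteration over dyadic scales $R\geq r_*(0)$. The one-step estimate compares an $a$-harmonic $u$ on $B_R$ with the $a_\ho$-harmonic $v$ sharing its coarse boundary data; using the two-scale ansatz $v+\phi_i\partial_i v$ together with the skew-symmetric potential $\sigma$ to render the flux divergence-free, the comparison error is controlled by $\sup_{B_{R/2}}|\nabla v|$ times the corrector oscillation on $B_R$, hence by $\delta^{1/2}\big(\fint_{B_R}|\nabla u|^2\big)^{1/2}$. Combined with the classical interior $C^{1,\alpha}$-estimate for the constant-coefficient operator $-\nabla\cdot a_\ho\nabla$, this gives, for a fixed small $\theta\in(0,1)$ and $\delta$ small enough,
\[
\fint_{B_{\theta R}}\Big|\nabla u-\fint_{B_{\theta R}}\nabla u\Big|^2 \,\leq\, \tfrac12\fint_{B_R}\Big|\nabla u-\fint_{B_R}\nabla u\Big|^2 .
\]
Iterating from scale $R$ down to $r$ and summing the geometric series yields \eqref{eq:mean-value}; carrying the inhomogeneity $\nabla\cdot g$ and the flux shift $h$ through the same iteration yields \eqref{eq:SE:2}, the extra term $R^2\sup_{B_R}(|\nabla g|^2+|\nabla h|^2)$ being the usual constant-coefficient contribution.

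\textbf{Deterministic step: large-scale Calder\'on--Zygmund.} I would deduce \eqref{I1-no-weight} from \eqref{eq:SE:2} by a real-variable argument of Shen type applied to the macroscopic quantity $x\mapsto\big(\fint_{B_*(x)}|\nabla u|^2\big)^{1/2}$: on each ball interior to the domain, \eqref{eq:SE:2} furnishes a good/bad decomposition of $\nabla u$ --- a piece with an $L^\infty$-bound on local averages plus a piece controlled by $g$ in $L^2$ --- and a stopping-time (good-$\lambda$) argument upgrades this to the $L^p$-bound for every $1<p<\infty$; equivalently, one perturbs off the Calder\'on--Zygmund estimate for $-\nabla\cdot a_\ho\nabla$.

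\textbf{Stochastic step and main obstacle.} Finally, $\{r_*(x)>r\}$ is contained in the event that the normalized corrector excess on $B_R$ exceeds $\delta$ for some dyadic $R\geq r$, so a union bound over these scales reduces \eqref{e.int-sto-r*} to stretched-exponential moment bounds on $R^{-2}\fint_{B_R}|(\phi,\sigma)-\fint_{B_R}(\phi,\sigma)|^2$ with decay rate $\pi_*(R)^{-1}$. This last point is the hard part: it is precisely corrector sublinearity with the sharp rate $\pi_*$ and stretched-exponential integrability, obtained from the sensitivity calculus for $F=\int g\cdot\nabla(\phi,\sigma)$ combined with the multiscale LSI, as in the proof of Theorem~\ref{t2}. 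To avoid circularity --- the sensitivity estimate itself invokes large-scale regularity --- one works, as in \cite{GNO-reg}, with a bootstrap: a weak a priori version of \eqref{eq:mean-value}--\eqref{eq:SE:2} suffices to close the sensitivity estimate, which then feeds back the corrector bounds needed for \eqref{e.int-sto-r*} with the stated $\pi_*$. The Campanato and Calder\'on--Zygmund steps, in contrast, are standard once the large-scale smallness of the corrector is available.
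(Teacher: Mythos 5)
The paper itself does not prove Lemma~\ref{lem:reg}: it is recalled verbatim from the companion article \cite{GNO-reg}, so the comparison is with that proof. Your plan reproduces its architecture --- minimal radius defined through quantified sublinearity of $(\phi,\sigma)$, a Lipschitz upper-envelope regularization, an Avellaneda--Lin type iteration above scale $r_*$, a Shen-type real-variable argument for \eqref{I1-no-weight}, and stochastic integrability \eqref{e.int-sto-r*} obtained by feeding the sensitivity calculus and the multiscale LSI into a buckling argument (you rightly identify the circularity there and the bootstrap that resolves it).

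There is, however, one step that fails as written: the one-step Campanato contraction
$\fint_{B_{\theta R}}|\nabla u-\fint_{B_{\theta R}}\nabla u|^2\le\tfrac12\fint_{B_R}|\nabla u-\fint_{B_R}\nabla u|^2$
for $a$-harmonic $u$ at scales $R\ge r_*(0)$. The plain mean oscillation of $\nabla u$ cannot decay below the level of the corrector-gradient fluctuations: take $u(x)=x_1+\phi_1(x)$, which is $a$-harmonic by \eqref{f.2}; then $\fint_{B_\rho}|\nabla u-\fint_{B_\rho}\nabla u|^2=\fint_{B_\rho}|\nabla\phi_1-\fint_{B_\rho}\nabla\phi_1|^2$, which by stationarity and ergodicity tends as $\rho\uparrow\infty$ to $\expec{|\nabla\phi_1-\expec{\nabla\phi_1}|^2}>0$ for any genuinely heterogeneous ensemble, so no fixed contraction factor can hold along dyadic scales above $r_*(0)$. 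Your own comparison step in fact shows $\nabla u\approx\nabla v+\partial_i v\,\nabla\phi_i$ on $B_{R/2}$: after subtracting the affine part of $v$, what is small is the distance of $\nabla u$ to gradients of \emph{$a$-linear functions} $x\mapsto\xi\cdot x+\phi_\xi(x)$ (with $\phi_\xi:=\xi_i\phi_i$), not to constants. The quantity to iterate is therefore the tilted excess $\inf_\xi\fint_{B_\rho}|\nabla u-(\xi+\nabla\phi_\xi)|^2$, which is exactly what \cite{GNO-reg} does; your harmonic-approximation step yields the one-step improvement for this excess, and \eqref{eq:mean-value}, \eqref{eq:SE:2} then follow by combining its decay with the equivalence $\fint_{B_\rho}|\xi+\nabla\phi_\xi|^2\sim|\xi|^2$ for $\rho\ge r_*(0)$ (Caccioppoli plus the smallness of $(\phi,\sigma)$ built into the definition of $r_*$), together with the usual bookkeeping for $g$ and $h$ in (b). With this correction the remaining steps (the Lipschitz regularization, whose moment bound survives the supremum by a union bound since $\pi_*$ grows algebraically; the good-$\lambda$ argument for \eqref{I1-no-weight}; the buckling argument for \eqref{e.int-sto-r*}) are sound and coincide with the companion paper's route.
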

The minimal radius $r_*$ quantifies the sublinearity of the extended corrector at infinity. Throughout the paper we write $r_*$ for $r_*(0)$ when no confusion occurs.
\begin{remark}
Since transposition is a local operation that does not change
the statistical properties of coefficient fields, the large-scale regularity theory of Lemma~\ref{lem:reg} also holds 
for the adjoint operator $-\nabla \cdot a^* \nabla$.
\qed
\end{remark}

\medskip

The second main ingredient is a sensitivity calculus.  
In line with Definition~\ref{def:WFI}, given a weight 
$\pi(\ell)= \ell^{-\beta-1}$ with $\beta>0$, we define the carr\'e-du-champ of the 
functional derivative of a random field $F$ by
\begin{equation}\label{e.carre}
\Norm{\partial^{\fun} F}_\pi^2\,:=\, \int_1^\infty \|\partial^{\fun} F\|_{\ell}^2 \pi(\ell)d\ell,
\end{equation}
where
\begin{equation}\label{e.carre-brick}
 \|\partial^\fun F\|_{\ell}^2\,:=\, \ell^{-d} \int |\partial^{\fun}_{x,\ell} F|^2dx.
\end{equation}
Note that \eqref{e.carre} is simply the RHS of \eqref{e:def-MLSI}.
The following sensitivity estimate is the main ingredient to the proof of Theorem~\ref{t1}.
\begin{proposition}[Sensitivity estimate]\label{s.bis}Assume that $\langle\cdot\rangle$ is stationary and satisfies the multiscale logarithmic Sobolev inequality \eqref{e:def-MLSI}
for $\pi(\ell)= (\ell+1)^{-\beta-1}$ and $\beta>0$.
Let $\nabla \phi$ and $q=a(\nabla \phi+e)-a_\ho e$ denote the gradient and the flux of the corrector of Definition~\ref{si} in some unit direction $e$, and $r_*\ge 1$ be the minimal radius of Lemma~\ref{lem:reg}.
Then $\expec{\cdot}$-a.s. and for all $r\geq 2$ the following holds:
\begin{enumerate}[(a)]
\item Let $g_1$ denote an averaging field $g_1$ satisfying 
  \begin{equation*}
    |g_1(x)|\leq \frac{1}{(|x|+r)^d},\qquad|\nabla g_1(x)| \leq \frac{1}{(|x|+r)^{d+1}},
  \end{equation*}
and such that the
  random variable
  \begin{equation*}
    F_1(a):=\int \big(\nabla\phi(a,x)\cdot g_1(x),q(a,x)\cdot g_1(x)\big)\,dx
  \end{equation*}
has finite second moment. Then the carr\'e-du-champ of $F_1$  satisfies  
  \begin{equation}
    \Norm{\partial^\fun F_1}^2_\pi \,\lesssim \,\Big(\frac{r_*(0)}{r}\vee 1 \Big)^{d}
    \Big( \int  \frac{r_*^d(x)\log^2(\frac{|x|}{r}+2)}{(|x|+r)^{2d}} dx+\frac1{\pi_*(r)}\Big),
    \label{e.prop:sensitivity}
  \end{equation}
  where $\pi_*$ is defined as in Theorem~\ref{t1}.
\item Let $g_2$ denote an averaging field satisfying
  \begin{equation*}
    \supp{g_2}\subset B_r,\quad |g_2(x)|\le \frac{1}{(|x|+1)^{d-1}},\quad |\nabla g_2(x)| \le \frac{1}{(|x|+1)^{d}},
  \end{equation*}
and define for $\alpha>0$
{\color{black}
  \begin{equation}\label{e.mu-deff}
    \mu_{\alpha,d}(r):=
    \begin{cases}
      (r+1)^{1-\frac \alpha 2}&\alpha<2,d\ge 2,\\
      \log^\frac12(r+2)&\alpha=2,d>2, \text{ or }\alpha>2,d=2,\\
      \log (r+2)&\alpha=2=d,\\
      1&\alpha>2,d> 2
    \end{cases}
  \end{equation}
  (this is a more explicit version of $\mu_*$, cf.~\eqref{e.def-Gdbeta}).
}
  Then the carr\'e-du-champ of the random variable
  \begin{equation}\label{e.F2-def}
    F_2(a):=\int \big(\nabla\phi(a,x)\cdot g_2(x), q(a,x)\cdot g_2(x)\big)\,dx
  \end{equation}
  satisfies
  \begin{equation}\label{e.prop:sensitivity-LSI2}
    \begin{aligned}
      \Norm{\partial^\fun F_2}^2_\pi \,\lesssim \,&
      r_*^{d-2}(0){\mu_{d,d}^2(r_*(0))} \Big(\frac{r_*(0)}{r}\vee 1\Big)^d \\
      &\times \Big(\int r_*^d(x)\frac{r^2}{(|x|+r)^2(|x|+1)^{2(d-1)}} dx +{\color{black} \mu_{\beta,d}^2(r)}\Big).
    \end{aligned}
  \end{equation}
\end{enumerate}
\qed
\end{proposition}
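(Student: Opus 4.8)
\textbf{Proof plan for Proposition~\ref{s.bis}.}
The overall strategy is the classical sensitivity computation: pass the functional derivative $\partial^\fun_{x,\ell}$ through the defining equations \eqref{f.2}, \eqref{f.5} for $(\phi,\sigma)$ and through the formula \eqref{si.6} for the flux $q$, express the resulting derivative of $F_i=\int(\nabla\phi\cdot g_i,q\cdot g_i)$ as a single integral against a suitable dual (Green-function-type) field, and then use the large-scale regularity of Lemma~\ref{lem:reg} to control that dual field by quantities involving the minimal radius $r_*$. More precisely, I would first compute $\frac{\partial F_i}{\partial a}(y)$. Differentiating \eqref{f.2} gives $-\nabla\cdot a\nabla(\partial\phi)=\nabla\cdot(\delta a\,(\nabla\phi+e))$, and differentiating the flux and the $\sigma$-equation similarly produces source terms localized at $y$. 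Pairing with $g_i$ and introducing the solution $u_i$ (resp.\ a vector potential for the $\sigma$-contribution) of the \emph{adjoint} problem $-\nabla\cdot a^*\nabla u_i=\nabla\cdot(g_i\text{-dependent data})$, one arrives at a pointwise identity of the schematic form
\[
\Big|\frac{\partial F_i}{\partial a}(y)\Big|\,\lesssim\,\big(|\nabla u_i(y)|+|g_i(y)|+\dots\big)\,\big(|\nabla\phi(y)+e|+\dots\big),
\]
so that $|\partial^\fun_{x,\ell}F_i|=\int_{B_\ell(x)}|\frac{\partial F_i}{\partial a}|$ is bounded by an $\ell$-dependent average of this product over $B_\ell(x)$.

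The second block of the proof is the deterministic estimate of the adjoint solutions $\nabla u_i$. For $F_1$ the data is the far-field averaging kernel $g_1$ with the stated decay; one expects $|\nabla u_1|$ to behave like $(|x|+r)^{-d}$ up to logarithmic corrections, and the large-scale $C^{1,0}$-estimate \eqref{eq:SE:2} together with the mean-value property \eqref{eq:mean-value}, applied on dyadic annuli and summed, should yield exactly the weight $\frac{r_*^d(x)\log^2(\frac{|x|}{r}+2)}{(|x|+r)^{2d}}$ appearing in \eqref{e.prop:sensitivity}, with the prefactor $(\frac{r_*(0)}{r}\vee1)^d$ coming from the innermost scale $r\vee r_*(0)$ where one can no longer invoke the regularity theory and must use the Caccioppoli/energy estimate instead. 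For $F_2$, whose kernel $g_2$ is supported in $B_r$ with the weaker (borderline non-integrable) decay $(|x|+1)^{-(d-1)}$, the corresponding Calderón–Zygmund estimate \eqref{I1-no-weight} is what converts the $L^2$-type bound on the data into an $L^2$-type bound on $\nabla u_2$; the extra factors $r_*^{d-2}(0)\mu_{d,d}^2(r_*(0))$ and $\mu_{\beta,d}^2(r)$ in \eqref{e.prop:sensitivity-LSI2} are precisely the logarithmic/algebraic divergences one picks up at the small scale $r_*(0)$ and at the large scale $r$ respectively when integrating a kernel of this borderline homogeneity (this is where the $d=2$ criticality of the Helmholtz projection enters). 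Finally, assembling the carré-du-champ $\Norm{\partial^\fun F_i}_\pi^2=\int_1^\infty\ell^{-d}\int|\partial^\fun_{x,\ell}F_i|^2\,dx\,\pi(\ell)\,d\ell$ requires: (i) bounding the spatial average of the squared product by the squared deterministic weight times an $\ell$-averaged corrector energy $\fint_{B_\ell(x)}|\nabla\phi+e|^2$, which by the mean-value property is controlled by $(\frac{r_*(x)}{\ell}\vee1)^d$ times a unit-scale quantity, and then absorbed into the $r_*$-weights; (ii) performing the $\ell$-integral against $\pi(\ell)=\ell^{-\beta-1}$, which is exactly the step that produces the three regimes $\beta<d$, $\beta=d$, $\beta>d$ in $\pi_*(r)$ (for part (a)) and the $\mu_{\beta,d}(r)$ trichotomy (for part (b)), since $\int_1^r \ell^{k-1-\beta}\,d\ell$ changes character according to the sign of $k-\beta$.

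I expect the main obstacle to be the careful multiscale bookkeeping in the deterministic estimates of $\nabla u_i$, i.e.\ tracking how the minimal radius $r_*$ enters at \emph{every} dyadic scale — not just at the base scale — so that the final bounds are genuinely of the form $\int r_*^d(x)(\text{kernel})(x)\,dx$ rather than a cruder $\sup r_*$-type bound; getting the random field $r_*(x)$ (and not merely $r_*(0)$) into the integrand is what later makes the stochastic integrability in Theorem~\ref{t1} work out, via \eqref{e.int-sto-r*} and a convolution/Jensen argument. A secondary delicate point, specifically for part (b), is handling the borderline homogeneity of $g_2$: one must be slightly generous and accept the logarithmic factors $\mu_{d,d}(r_*(0))$ and $\mu_{\beta,d}(r)$ rather than hoping for a clean power, and one must check that the Calderón–Zygmund estimate \eqref{I1-no-weight} is applicable to the (adjoint) equation with the relevant right-hand side and decay, possibly after a duality/weighted reformulation. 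The skew-symmetry $\sigma_{ijk}=-\sigma_{ikj}$ and the explicit form \eqref{si.5} of the $\sigma$-equation are used to reduce the $\sigma$-contribution to the same type of adjoint estimate (its source is $\partial_j q_{ik}-\partial_k q_{ij}$, i.e.\ again built from the flux), so no genuinely new estimate is needed for $\sigma$ beyond what is done for $\phi$.
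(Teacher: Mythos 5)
Your first block — differentiating the corrector equation, pairing with $g_i$, and representing the derivative through the adjoint fields so that $|\partial^\fun_{x,\ell}F_i|\le\int_{B_\ell(x)}(|\nabla v|+|g_i|)|\nabla\phi+e|$ — is exactly the paper's Step~1, and your point (i) correctly identifies that the $r_*^d(x)$ weight enters through the corrector energy $\fint_{B_\ell(x)}|\nabla\phi+e|^2\lesssim(\frac{r_*(x)}{\ell}\vee1)^d$ rather than through the estimate of $\nabla v$ itself (which carries only $r_*(0)$ as a prefactor, via Lemma~\ref{LEas}). The genuine gap is in your assembly steps (i)--(ii): if one only uses Cauchy--Schwarz at scale $\ell$ together with that corrector-energy bound, the resulting brick contains a term of order $(\ell/r)^d$ (this is the paper's \eqref{e.generic-L2-L2}, leading to \eqref{e.carre-du-champ-generic}), and the $\ell$-integration $\int_1^\infty(\ell/r)^d\,\ell^{-\beta-1}\,d\ell$ \emph{diverges} whenever $\beta\le d$ --- precisely the slowly correlated regime the proposition is designed for. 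The paper therefore needs a second, refined estimate in the generic case $r_*(0)\le\ell$, its $L^\infty$--$L^1$ bound \eqref{e.specific-L1-Linfty}: the near-field contribution $|x|\lesssim\ell$ is estimated in $L^1$ \emph{before} squaring, exploiting that $|\nabla v|+|g|$ itself (not its square) is integrable on scales $\gg r$. Only this improves the large-$\ell$ behaviour to $\log^4(2+\frac\ell r)$ for $g_1$, resp.\ to $\ell^2\mu_{d,d}(\frac r\ell)$ and $r^2\log^2(\frac\ell r+2)$ for $g_2$ (the paper's \eqref{Aug6.3}), so that the $\ell$-integral against $\pi(\ell)=(\ell+1)^{-\beta-1}$ produces $\frac1{\pi_*(r)}$ and $\mu_{\beta,d}^2(r)$. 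As written, your plan proves the proposition only for $\beta>d$ (equivalently for the standard LSI); this near-field $L^1$ device is a missing idea, not mere bookkeeping.

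A secondary issue concerns part (b). The tool you invoke, the large-scale Calder\'on--Zygmund estimate \eqref{I1-no-weight}, is unweighted and returns an $L^p$-in-$x$ bound; what the argument actually requires is the pointwise-in-$x$ weighted decay $\fint_{B_*(x)}(|\nabla v|+|g_2|)^2\lesssim C_*\,\varphi_r(x)$ with $\varphi_r(x)=\frac{r^2}{(|x|+r)^2(|x|+1)^{2(d-1)}}$ and $C_*=r_*^{d-2}(0)\mu_{d,d}^2(r_*(0))\big(\frac{r_*(0)}{r}\vee1\big)^d$, which the paper obtains from the Schauder-type Lemmas~\ref{LEao} and~\ref{LEap}; the latter (dipole decay $\sim r^{d-\gamma}/|x|^d$ outside $B_r$, proved via the extension-plus-duality construction) is what gives the decay of $\varphi_r$ outside the support of $g_2$ and hence the finiteness and structure of $\int r_*^d\varphi_r$. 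A weighted large-scale CZ estimate would indeed be a viable alternative route (the authors say so, pointing to \cite{DGO}), but \eqref{I1-no-weight} as stated does not deliver the kernel $\varphi_r$ or the prefactor $C_*$, so you would have to prove either the weighted CZ version or the dipole-decay lemma separately; your hedge ``after a duality/weighted reformulation'' is exactly where that nontrivial work sits.
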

\begin{remark}\label{R:1}
  In the proofs of the main theorems we apply the sensitivity estimate to averages of $\sigma$ of the form
  $F_3:=\int\nabla\sigma_{ijk}\cdot g$ where $g$ is assumed to be a gradient field, i.e., $g=\nabla\theta$ for some potential $\theta$. The latter property allows us to reformulate the average of $\nabla \sigma$ as an average of $q$. Indeed, by appealing to \eqref{si.5} we have
  \begin{equation*}
    F_3=
  \int \nabla\sigma_{ijk}\cdot g=\int q\cdot S g\quad\text{with the skew-symmetric matrix } S=e_j\otimes e_k-e_k\otimes e_j.
  \end{equation*}
  The averaging field $Sg$ (which is typically not a gradient field) inherits the decay properties of $g$, and thus the estimates of Proposition~\ref{s.bis} apply verbatim to the random variable $F_3$. 
  \qed
\end{remark}
\begin{remark}\label{R:hole}
By the hole-filling argument, \eqref{e.prop:sensitivity} and \eqref{e.prop:sensitivity-LSI2} can be slightly improved to 
  \begin{equation*}
    \Norm{\partial^\fun F_1}^2_\pi \,\lesssim \,\Big(\frac{r_*(0)}{r}\vee 1 \Big)^{d}
    \Big( \int  \frac{r_*^{d(1-\epsilon)}(x)\log^2(\frac{|x|}{r}+2)}{(|x|+r)^{2d}} dx+\frac1{\pi_*(r)}\Big),
  \end{equation*}
and
  \begin{equation*}
    \begin{aligned}
      \Norm{\partial^\fun F_2}^2_\pi \,\lesssim \,&
      r_*^{d-2}(0){\mu_{d,d}(r_*(0))} \Big(\frac{r_*(0)}{r}\vee 1\Big)^d \\
      &\times \Big(\int r_*^{d(1-\epsilon)}(x)\frac{r^2}{(|x|+r)^2(|x|+1)^{2(d-1)}} dx + {\color{black}\mu_{\beta,d}^2(r)}\Big),
    \end{aligned}
  \end{equation*}
where $\epsilon=\epsilon(d,\lambda)>0$ is the hole-filling exponent. 
This unimportant detail will only be used to avoid an arbitrarily small loss of stochastic integrability
in critical cases, so that we can keep statements neat.
  \qed
\end{remark}
The main progress over the sensitivity estimates of \cite{Gloria-Otto-09,Gloria-Neukamm-Otto-14,Gloria-Otto-14} are the following three lemmas --- the only (specific) PDE ingredients of this work.
The first, simpler, Lemma~\ref{LEas} provides the main estimate
for $g_1$; the second more subtle Lemma~\ref{LEap} (based on Lemma~\ref{LEao})
provides the main estimate for $g_2$.
Both solely rely on large-scale Schauder theory, which is valid
from scales $r_*$ onwards. For both lemmas, we consider square integrable
vector fields $\nabla v$ and $g$ related by either
\begin{align}\label{wg16}
\nabla\cdot (a\nabla v+g)=0\quad\mbox{or}\quad\nabla\cdot a(\nabla v+g)=0.
\end{align}
(The square-integrability is an assumption in Lemma~\ref{LEao} below in the case $\gamma<\frac d2$.)
\begin{lemma}\label{LEas}
Suppose
\begin{align}\label{wg02}
|g(x)|\le\frac{1}{(|x|+r)^d},\quad
|\nabla g(x)|\le\frac{1}{(|x|+r)^{d+1}}\quad\mbox{for all}\;x\in\mathbb{R}^d.
\end{align}
Then for all $x\in\mathbb{R}^d$
\begin{align}\label{wg01}
\big(\fint_{B_*(x)}|\nabla v|^2\big)^\frac{1}{2}
\lesssim (\frac{r_*(0)}{r}\vee 1)^\frac{d}{2}
\;\frac{\log(\frac{|x|}{r}+2)}{(|x|+r)^d}.
\end{align}
\qed
\end{lemma}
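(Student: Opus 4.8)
\textbf{Proof proposal for Lemma~\ref{LEas}.}

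The plan is to decouple the two sources of decay in the averaging field $g$ — its overall magnitude $(|x|+r)^{-d}$ and its integrable tail — and to convert them into an $L^2$-type estimate on $\nabla v$ via the large-scale Schauder theory of Lemma~\ref{lem:reg}. The natural first move is to dyadically decompose $g$ around the observation point $x$: write $g=\sum_{n\geq 0} g_n$ where $g_n$ is (a smooth version of) $g$ restricted to the annulus $\{2^n\rho \leq |y-x| \leq 2^{n+1}\rho\}$ with $\rho:=\max(|x|+r, r_*(x))$ playing the role of the relevant base scale, plus one interior piece supported in $B_\rho(x)$. By linearity $\nabla v=\sum_n \nabla v_n$ with $\nabla\cdot(a\nabla v_n+g_n)=0$ (or the other form of \eqref{wg16}), and one estimates $\big(\fint_{B_*(x)}|\nabla v_n|^2\big)^{1/2}$ separately for each $n$.

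For the far-away pieces $g_n$ (supported at distance $\sim 2^n\rho$ from $x$), the key is that on the ball $B_{2^{n-1}\rho}(x)$ the field $v_n$ is $a$-harmonic, so the mean-value property \eqref{eq:mean-value} lets one transfer $\fint_{B_*(x)}|\nabla v_n|^2$ to $\fint_{B_{2^{n-1}\rho}(x)}|\nabla v_n|^2$ — this is legitimate because $r_*(x)\leq\rho$. Then an energy estimate (testing the equation for $v_n$ with $v_n$ itself, using \eqref{f.40}, together with sublinearity/decay so that the boundary term drops) bounds the latter by a constant times $\int |g_n|^2 \lesssim (2^n\rho)^d\cdot (2^n\rho)^{-2d}\cdot\ldots$; the Lipschitz and size bounds \eqref{wg02} on $g$ give $\sup|g_n|\lesssim (2^n\rho)^{-d}$, hence $\fint_{B_{2^{n-1}\rho}(x)}|\nabla v_n|^2 \lesssim (2^n\rho)^{-d}\cdot(2^n\rho)^{-d}$, i.e.\ $\big(\fint_{B_*(x)}|\nabla v_n|^2\big)^{1/2}\lesssim (2^n\rho)^{-d} (\tfrac{r_*(0)}{r}\vee 1)^{d/2}$ after accounting for the mean-value step (this is where the prefactor $(\tfrac{r_*(0)}{r}\vee 1)^{d/2}$ enters, since one may need $r_*(0)$-scale information at the origin when $|x|\lesssim r$). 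For the interior piece, one uses the large-scale $C^{1,0}$-estimate \eqref{eq:SE:2} down to the scale $r_*(0)$ with $h=0$, producing the term with $\sup_{B_\rho}|\nabla g|^2 \cdot \rho^2 \lesssim \rho^{-2d}$, again of order $(|x|+r)^{-2d}$.

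The point that produces the \emph{logarithm} is the summation $\sum_{n\geq 0}(2^n\rho)^{-d}$ — wait, that sum converges without a log, so in fact the log must come from a more careful accounting: the annuli that actually matter range over $n$ with $2^n\rho\lesssim |x|$ versus the "background" contribution from the region near the origin where $|g|\sim r^{-d}$ is roughly constant over a ball of radius $\sim|x|$; integrating the constant density $r^{-2d}$ over scales from $r$ (or $r_*$) up to $|x|$ gives $\int_r^{|x|}\frac{d\rho}{\rho}\sim\log(\frac{|x|}{r}+2)$ worth of energy, i.e.\ $\fint_{B_*(x)}|\nabla v|^2\lesssim (\tfrac{r_*(0)}{r}\vee 1)^d\,\frac{\log^2(\frac{|x|}{r}+2)}{(|x|+r)^{2d}}$ — more precisely one should organize the decomposition so that the slowly-varying "bulk" of $g$ near the origin is treated as a single block whose energy, propagated to $x$ by a sequence of mean-value/Caccioppoli steps across dyadic annuli, accumulates the $\log$. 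Taking square roots yields \eqref{wg01}.

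The main obstacle, I expect, is the bookkeeping in the intermediate regime $r \lesssim |x|$: one must simultaneously (i) use $r_*(x)\lesssim \rho$ to justify each application of \eqref{eq:mean-value}–\eqref{eq:SE:2} on balls centered away from the origin — this is fine by stationarity and the Lipschitz bound on $r_*$, but the estimate is stated in terms of $r_*(0)$, so one needs $r_*(x)\lesssim r_*(0)+|x|$ and then to absorb the $|x|$-part into the already-present $(|x|+r)^{-d}$ decay — and (ii) keep the power of $(\tfrac{r_*(0)}{r}\vee 1)$ at exactly $d/2$ rather than something larger, which forces one to apply the large-scale regularity at scale exactly $\max(r,r_*(0))$ at the origin and not at a larger scale. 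The skew-symmetric/second form of \eqref{wg16} is handled identically since the energy estimate and the large-scale regularity of Lemma~\ref{lem:reg} apply to both, and no pointwise ellipticity is used.
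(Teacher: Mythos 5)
The far-field half of your scheme is sound: for the pieces of $g$ supported in annuli around $x$ at distance $\gtrsim |x|+r$, harmonicity of $v_n$ near $x$, the mean-value property at scale $r_*(x)\le\rho$, and the plain energy estimate do give a geometric sum of order $(|x|+r)^{-d}$, and the ungeneric case $r_*(x)\gtrsim|x|$ can be absorbed because the Lipschitz continuity of $r_*$ then forces $r_*(0)\gtrsim|x|$, so the prefactor $(\frac{r_*(0)}{r}\vee1)^{d/2}$ covers the loss. The genuine gap is the interior block $g\mathds 1_{B_\rho(x)}$ with $\rho\sim|x|+r$, which contains the origin and hence the bulk of $g$ --- and this is exactly where the content of the lemma lies. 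Your application of \eqref{eq:SE:2} to it rests on the false bound $\rho^2\sup_{B_\rho(x)}|\nabla g|^2\lesssim\rho^{-2d}$: since $B_\rho(x)\ni 0$, the hypothesis \eqref{wg02} only gives $\sup_{B_\rho(x)}|\nabla g|\lesssim r^{-(d+1)}$, so this term is of size $(|x|+r)^2r^{-2(d+1)}$, far above the target when $|x|\gg r$; and the companion energy term only yields $\fint_{B_\rho(x)}|\nabla v_{\mathrm{int}}|^2\lesssim\rho^{-d}r^{-d}$, i.e.\ a bound of order $\big(r(|x|+r)\big)^{-d/2}$, missing \eqref{wg01} by a factor $\big(\frac{|x|+r}{r}\big)^{d/2}$ up to logarithms. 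Your fallback --- treating the near-origin ``bulk'' as one block and ``propagating its energy to $x$ by mean-value/Caccioppoli steps across dyadic annuli'' --- is not an argument: mean-value plus a global energy estimate can only give, per scale $\rho$, bounds of the type $|x|^{-d/2}\rho^{-d/2}$, and no iteration of Caccioppoli-type steps improves the power of $|x|$ from $d/2$ to $d$.

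What is missing is a \emph{dipole} estimate for compactly supported right-hand sides, and this requires a duality argument rather than an energy estimate. Concretely, for a piece $g_\rho$ supported in $B_\rho$ (centered at the \emph{origin}) with $\sup|g_\rho|\lesssim\rho^{-d}$ and $\sup|\nabla g_\rho|\lesssim\rho^{-d-1}$, one needs the log-free bound $\big(\fint_{B_*(x)}|\nabla v_\rho|^2\big)^{1/2}\lesssim(\frac{r_*(0)}{\rho}\vee1)^{d/2}(|x|+\rho)^{-d}$, i.e.\ \eqref{wg04}. This is obtained by testing with an arbitrary $h$ supported in $B_R^c$, solving the adjoint problem $\nabla\cdot(a^*\nabla w+h)=0$, using $\int h\cdot\nabla v_\rho=\int g_\rho\cdot\nabla w$ together with the mean-value property \eqref{eq:mean-value} for the $a^*$-harmonic $w$ near the origin and the energy estimate for $w$; this gives the annealed-in-space decay $\int_{B_R^c}|\nabla v_\rho|^2\lesssim(\frac{r_*(0)\vee\rho}{R})^d\rho^{-d}$, which is then localized at $x$ by one more application of \eqref{eq:mean-value} (plus \eqref{eq:SE:2} for the near-field $|x|\lesssim\rho$, where the gradient bound on $g_\rho$ is now the correct $\rho^{-d-1}$). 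With this per-scale estimate in hand, the right decomposition is dyadic \emph{around the origin}: the $\sim\log(\frac{|x|}{r}+2)$ scales between $r$ and $|x|$ each contribute $\sim(\frac{r_*(0)}{\rho}\vee1)^{d/2}(|x|+r)^{-d}$, and summing them produces the logarithm in \eqref{wg01} with exactly the first power. Without the duality step, your decomposition centered at $x$ cannot reach the exponent $d$ in the decay, so the proposal as written does not prove the lemma.
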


\begin{lemma}\label{LEao}
Suppose that for some exponent $\gamma\in(0,d)$ we have
\begin{align}\label{wg13}
|g(x)|\le\frac{1}{(|x|+1)^\gamma},
\quad |\nabla g(x)|\le\frac{1}{(|x|+1)^{\gamma+1}} \quad\mbox{for all}\;x\in\mathbb{R}^d.
\end{align}
Then for all $x\in\mathbb{R}^d$
\begin{align}\label{wg09}
\big(\fint_{B_*(x)}|\nabla v|^2\big)^\frac{1}{2}
\lesssim\left\{\begin{array}{ccc}
1&\mbox{for}&\gamma<\frac{d}{2}\\
\log(r_*(0)+2)&\mbox{for}&\gamma=\frac{d}{2}\\
r_*^{\gamma-\frac{d}{2}}(0)+1&\mbox{for}&\gamma>\frac{d}{2}
\end{array}\right\}\times
\frac{1}{(|x|+1)^\gamma}.
\end{align}
\qed
\end{lemma}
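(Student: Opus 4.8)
\textbf{Proof proposal for Lemma~\ref{LEao}.}

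The plan is to reduce the estimate to an application of the large-scale $C^{1,0}$-estimate \eqref{eq:SE:2} (or, in the borderline case $\gamma<\frac d2$, the mean-value property \eqref{eq:mean-value} plus a Caccioppoli-type energy bound), exploiting that $g$ decays like $(|x|+1)^{-\gamma}$ with $|\nabla g|$ controlled accordingly. First I would localize: fix $x\in\R^d$ and set $R:=|x|\vee 1$, so that on the ball $B_R(0)$ we have $\sup_{B_R}(|g|^2+|\nabla g|^2)\lesssim R^{-2\gamma}$ when $|x|$ is of order $R$ (more carefully, one works on a dyadic annulus $B_{2R}\setminus B_{R/2}$ around $x$, or simply on $B_R(x)$ with $R\sim|x|+1$, after noting $|g|$ is comparable on such a ball). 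Applying \eqref{eq:SE:2} on $B_R(x)$ with $h=0$ (for the first equation in \eqref{wg16}, i.e.\ $\nabla\cdot(a\nabla v+g)=0$, one uses the version with $g$ in the role of the inhomogeneity; for the second, $\nabla\cdot a(\nabla v+g)=0$, one takes $h=g$) yields, for $r_*(0)\le r_*(x)\le R$,
\[
\fint_{B_*(x)}|\nabla v|^2\ \lesssim\ \Big(\tfrac{r_*(x)}{R}\Big)^{d}\fint_{B_R(x)}|\nabla v|^2 + R^2\sup_{B_R(x)}(|\nabla g|^2+|\nabla h|^2),
\]
where the first term uses the mean-value property to pass from scale $R$ down to scale $r_*(x)$ and we have used $r_*(x)\le\frac18|x|+r_*(0)\lesssim R$ from the Lipschitz bound on $r_*$.

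The crux is then to bound the global energy $\fint_{B_R(x)}|\nabla v|^2$, equivalently $\int_{B_{2R}}|\nabla v|^2$, by a quantity of size $R^{d-2\gamma}$ (for $\gamma>\frac d2$ this is a decaying power, for $\gamma=\frac d2$ a constant up to the logarithm, for $\gamma<\frac d2$ a growing power which is exactly why square-integrability of $\nabla v$ must be assumed). I would obtain this by testing the equation \eqref{wg16} against $v$ itself (suitably cut off), using uniform ellipticity \eqref{f.40} and boundedness \eqref{f.56}: $\int|\nabla v|^2\lesssim\int g\cdot\nabla v\lesssim(\int|g|^2)^{1/2}(\int|\nabla v|^2)^{1/2}$, so $\int|\nabla v|^2\lesssim\int|g|^2$, and $\int_{\R^d}|g|^2\lesssim\int(|x|+1)^{-2\gamma}dx$, which is finite iff $2\gamma>d$. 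For $\gamma>\frac d2$ this gives a global bound $\int|\nabla v|^2\lesssim 1$; feeding this back, the first term above is $\lesssim R^{-d}r_*^{d}(x)$, while the second is $\lesssim R^{2-2(\gamma+1)}=R^{-2\gamma}$. Optimizing the split scale (the mean-value property lets us stop the downscaling at $r_*(x)$ but we may also stop earlier), one finds $\fint_{B_*(x)}|\nabla v|^2\lesssim (r_*^{2\gamma-d}(x)+1)\,R^{-2\gamma}$, and since $r_*(x)\lesssim r_*(0)+R$ one separates the two regimes $R\lesssim r_*(0)$ and $R\gtrsim r_*(0)$ to arrive at the factor $(r_*^{\gamma-d/2}(0)+1)^2(|x|+1)^{-2\gamma}$; the case $\gamma=\frac d2$ produces a logarithm $\log^2(r_*(0)+2)$ from the scale-by-scale summation of the (now marginally non-summable) annular contributions, and $\gamma<\frac d2$ uses instead the assumed $L^2$-bound on $\nabla v$ directly in place of the energy estimate, giving the constant bound. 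Taking square roots yields \eqref{wg09}.

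The main obstacle I anticipate is the bookkeeping in the intermediate regime $|x|\sim r_*(0)$ and, more substantively, making the dyadic/annular summation genuinely rigorous: the large-scale regularity in Lemma~\ref{lem:reg} only gives the mean-value property and $C^{1,0}$-estimate from scale $r_*$ upward, so one must carefully track how $r_*(x)$ varies across the annuli (using its $\frac18$-Lipschitz continuity) and ensure that the Caccioppoli/energy estimate, which is purely deterministic and scale-free, correctly furnishes the global energy control that the large-scale theory alone cannot. In particular, for $\gamma$ close to $\frac d2$ the logarithmic borderline requires summing $\sum_{2^k\lesssim |x|}$ of terms of size $2^{-k(2\gamma-d)}$ against the correct weights, and one should be attentive that the logarithm lands on $r_*(0)$ (the obstacle to sublinearity) rather than on $|x|$, which is the content of \eqref{wg09} in the critical case.
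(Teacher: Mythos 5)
There is a genuine gap, and it sits exactly where your argument claims to be done "by optimizing the split scale''. After applying the large-scale $C^{1,0}$-estimate \eqref{eq:SE:2} on a ball $B_R(x)$ with $R\sim|x|$ (note, incidentally, that you must keep $R\le\frac{|x|}2$, otherwise $B_R(x)$ reaches the origin, $\sup_{B_R(x)}|\nabla g|^2\sim1$ and the inhomogeneity term degenerates to $R^2$), the only control you have on the first right-hand side term $\fint_{B_R(x)}|\nabla v|^2$ is the \emph{global} energy estimate $\int|\nabla v|^2\lesssim\int|g|^2$. That bound carries no spatial decay: for $\gamma>\frac d2$ it gives $\fint_{B_R(x)}|\nabla v|^2\lesssim R^{-d}\sim(|x|+1)^{-d}$, which is strictly weaker than the claimed $(r_*^{2\gamma-d}(0)+1)(|x|+1)^{-2\gamma}$ as soon as $|x|\gg r_*(0)$; no choice of the intermediate radius in \eqref{eq:SE:2} can create the missing decay, since you only ever feed the estimate the total energy. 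The asserted intermediate bound $\fint_{B_*(x)}|\nabla v|^2\lesssim(r_*^{2\gamma-d}(x)+1)R^{-2\gamma}$ therefore does not follow from your two ingredients. For $\gamma\le\frac d2$ the scheme fares worse: $\int|g|^2$ diverges ($\gamma<\frac d2$) or is only borderline, and the standing square-integrability of $\nabla v$ is a purely qualitative assumption, so any constant extracted from it depends on $\|\nabla v\|_{L^2}$ and is not admissible in \eqref{wg09}. Also, your one-ball argument has no mechanism that makes the logarithm (at $\gamma=\frac d2$) and the power $r_*^{\gamma-\frac d2}$ attach to $r_*(0)$ rather than to $|x|$ — there is no scale-by-scale summation in it to produce them.

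What is missing is precisely a statement giving decay of the \emph{local} energy of $\nabla v$ away from the support of (the relevant part of) $g$. The paper obtains this by decomposing $g=\sum_{\rho\ \mathrm{dyadic}}g_\rho$ with $g_\rho$ supported in $B_\rho$ and invoking, for each piece, the dipole estimate \eqref{wg04} from Step~1 of the proof of Lemma~\ref{LEas}; that estimate in turn rests on a duality argument against the adjoint problem $\nabla\cdot(a^*\nabla w+h)=0$ combined with the mean-value property, which is the PDE mechanism producing the $(|x|+\rho)^{-d}$ decay per scale. Summing over dyadic $\rho$ then yields the prefactors $1$, $\log(r_*(0)+2)$, $r_*^{\gamma-\frac d2}(0)$ exactly from the scales $\rho<r_*(0)$ where the factor $(\frac{r_*(0)}{\rho})^{\frac d2}$ is active, while the regime $|x|\lesssim r_*(0)$ is handled separately through a near-field remainder controlled by the plain energy estimate. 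To repair your proposal you would have to import this duality/dipole ingredient (or an equivalent annulus-by-annulus energy decay estimate); the $C^{1,0}$-estimate plus the global energy bound alone cannot reach \eqref{wg09}.
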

\begin{lemma}\label{LEap}
Suppose that in addition of the assumptions of Lemma~\ref{LEao}, for some $r\ge1$
\begin{align}\label{wg26}
\supp g\subset B_r.
\end{align}
Then for all $x\in \R^d\setminus B_r$
\begin{equation}\label{wg34}
{\big(\fint_{B_*(x)}|\nabla v|^2\big)^\frac{1}{2}}\,\lesssim\,\left\{\begin{array}{ccc}
1&\mbox{for}&\gamma<\frac{d}{2}\\
\log(r_*(0)+2)&\mbox{for}&\gamma=\frac{d}{2}\\
r_*^{\gamma-\frac{d}{2}}(0)+1&\mbox{for}&\gamma>\frac{d}{2}
\end{array}\right\}\times
(\frac{r_*(0)}{r}\vee 1)^\frac{d}{2}\frac{r^{d-\gamma}}{|x|^d}.
\end{equation}
\qed
\end{lemma}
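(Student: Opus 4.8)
The plan is to combine the interior decay estimate of Lemma~\ref{LEao} — which already gives the right $r_*$-dependent prefactor — with a far-field multipole-type estimate that exploits the compact support assumption \eqref{wg26}. Since $g$ is supported in $B_r$, the equation $\nabla\cdot(a\nabla v+g)=0$ (or $\nabla\cdot a(\nabla v+g)=0$) shows that $v$ is $a$-harmonic on $\R^d\setminus B_r$; so on the complement the only mechanism by which $\nabla v$ can be large at scale $|x|\gg r$ is through the "charge'' that $g$ deposits in $B_r$. The target RHS $(\tfrac{r_*(0)}r\vee 1)^{d/2}\,r^{d-\gamma}/|x|^d$ is precisely the strength one expects: $\int_{B_r}|g|\lesssim r^{d-\gamma}$ by \eqref{wg13}, and an $a$-harmonic function generated by a source of total mass $m$ confined to $B_r$ has gradient decaying like $m/|x|^d$ away from $B_r$ (this is the large-scale analogue of the gradient of the fundamental solution, licensed by the large-scale $C^{1,0}$ estimate \eqref{eq:SE:2} of Lemma~\ref{lem:reg} from scales $r_*$ onwards).

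First I would fix $x\in\R^d\setminus B_r$ and set $R:=\tfrac12|x|$, so that $B_R(x)$ does not meet $B_r$ (up to adjusting constants; if $|x|\le C r$ the estimate \eqref{wg34} follows directly from Lemma~\ref{LEao}, since then $r^{d-\gamma}/|x|^d \gtrsim (|x|+1)^{-\gamma}$, so assume $|x|\gg r$). On $B_R(x)$ the field $v$ is $a$-harmonic, so the mean-value property \eqref{eq:mean-value} (applied at the point $x$ with radii $r_*(x)\le \e R\le R$ for a small dimensional $\e$) gives
\[
\fint_{B_*(x)}|\nabla v|^2\,\lesssim\,\fint_{B_R(x)}|\nabla v|^2.
\]
It remains to bound $\fint_{B_R(x)}|\nabla v|^2$ by $(\tfrac{r_*(0)}r\vee 1)^d\,r^{2(d-\gamma)}/|x|^{2d}$ times the stated $r_*(0)$-prefactor squared. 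For this I would use a Caccioppoli/energy argument on annuli centered at the origin: since $v$ is $a$-harmonic on $\R^d\setminus B_r$, testing the equation with $v\eta^2$ for suitable cutoffs $\eta$ supported away from $B_r$ and using \eqref{f.40}, \eqref{f.56} yields that $\fint_{B_{2\rho}\setminus B_\rho}|\nabla v|^2$ is controlled by $\rho^{-2}\fint_{B_{4\rho}\setminus B_{\rho/2}}|v|^2$; iterating this dyadically together with a Poincaré inequality in annuli (to convert oscillation into gradient) and using square-integrability of $\nabla v$ to kill the boundary term at infinity, one propagates the decay inward from where it is known — namely near $\partial B_r$, where Lemma~\ref{LEao} supplies $(\fint_{B_*(y)}|\nabla v|^2)^{1/2}\lesssim (\text{$r_*(0)$-factor})\,r^{-\gamma}$ for $|y|\sim r$. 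The decay rate $|x|^{-d}$ for $|\nabla v|$ (equivalently $|x|^{1-d}$ for $v$, modulo an additive constant which must be normalized away using sublinearity/square-integrability) is exactly the homogeneous $a_\ho$-harmonic decay, and the large-scale $C^{1,0}$-estimate is what upgrades the $a$-harmonicity to this sharp algebraic rate at scales $\ge r_*$.

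The main obstacle I expect is the far-field decay step: one must show $|\nabla v|\lesssim m/|x|^d$ rather than the naive $|\nabla v|\lesssim m/|x|^{d-1}$ that a crude energy estimate would give, and this gain of one power requires genuinely using that the \emph{net} effect of $g$ is captured by a scalar (the total mass $\int_{B_r}g\cdot\nabla(\cdot)$ against affine test functions), i.e.\ a first-moment cancellation, made rigorous through the large-scale regularity theory \eqref{eq:SE:2} exactly as in the proof of Lemma~\ref{LEas}. A clean way to organize this is to compare $v$ on the exterior domain with the solution of $-\nabla\cdot a\nabla w = \nabla\cdot g$ on all of $\R^d$ (which decays like the $a$-Green's function gradient, controlled via the dual-equation method of \cite{GNO-reg}) and to absorb the difference, which is $a$-harmonic in the exterior, via maximum-principle-free energy estimates as above. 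Tracking the $r_*(0)$-dependence through Lemma~\ref{LEao} and the $(\tfrac{r_*(0)}r\vee1)^{d/2}$ factor coming from the passage between $B_*(x)$-averages and $B_R(x)$-averages (as in \eqref{wg01}) then yields \eqref{wg34}; the three-case split in $\gamma$ is inherited verbatim from Lemma~\ref{LEao} and needs no separate treatment.
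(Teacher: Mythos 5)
You correctly reduce matters to the far-field regime $|x|\gg r$ (the case $|x|\lesssim r$ indeed follows from Lemma~\ref{LEao}) and, via the mean-value property \eqref{eq:mean-value}, to bounding $\fint_{B_R(x)}|\nabla v|^2$ with $R=\frac12|x|$; you also correctly identify the crux, namely gaining the dipole rate $|x|^{-d}$ rather than the monopole rate $|x|^{-(d-1)}$. The gap is that the mechanism you offer for this gain is not an argument. The Caccioppoli--Poincar\'e iteration on dyadic annuli uses only that $v$ is $a$-harmonic outside $B_r$ together with $\nabla v\in L^2$, and this information is consistent (for $d\ge 3$) with a monopole behaviour $\nabla v\sim |x|^{1-d}$, which is square-integrable at infinity; hence no such energy iteration can do better than $\int_{B_R^c}|\nabla v|^2\lesssim(\frac rR)^{d-2}\int_{B_{4r}\setminus B_r}|\nabla v|^2$, a full power short of what is needed. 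The vanishing of the net flux must be injected, and neither of your two suggestions supplies it: the remark about testing against affine functions is not developed into an estimate, and the proposed comparison with ``the solution of $-\nabla\cdot a\nabla w=\nabla\cdot g$ on all of $\R^d$'' is circular for the first alternative in \eqref{wg16} (that solution \emph{is} $v$), while for the second alternative it would require precisely the quenched Green-function gradient bounds that this paper's method is designed to avoid.

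The paper closes this gap with two steps you are missing. First, an extension construction: one builds $(\nabla\bar v,\bar g)$ solving the same equation with $\nabla\bar v=\nabla v$ outside $B_{2r}$, $\supp{\bar g}\subset B_{4r}$ and $\int|\bar g|^2\lesssim\int_{B_{4r}\setminus B_r}|\nabla v|^2$ (a cut-off plus an auxiliary Neumann problem, cf.~\eqref{wg28}). Second, a duality argument with the adjoint problem $\nabla\cdot(a^*\nabla w+h)=0$ for $h$ supported in $B_R^c$, combined with the mean-value property for $w$ exactly as in Substep~1.1 of Lemma~\ref{LEas}; because the effective right-hand side $\bar g$ is in divergence form and compactly supported, the absence of a monopole is automatic and one obtains the sharp dipole decay \eqref{wg32}. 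A further covering argument (Step~3 of the paper) converts the annulus energy into $B_*$-averages so that Lemma~\ref{LEao} can be inserted. Finally, your claim that no case distinction is needed is not accurate: the dipole-decay and covering steps require $r_*(0)\lesssim r$, so the regimes $4r\le 2r_*(0)\le\frac12|x|$ and $2r_*(0)\ge\frac12|x|$ are treated separately via the plain energy estimate \eqref{wg41}; this is also where the prefactor $(\frac{r_*(0)}{r}\vee 1)^{\frac d2}$ actually originates, not in the passage from $B_R(x)$- to $B_*(x)$-averages as you suggest.
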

Let us first comment on Lemma~\ref{LEas}.
The logarithm in (\ref{wg01}) is unavoidable even in the case of $a=\Id$
(and thus $r_*\equiv 0$), as we presently argue: In this homogeneous case, we may
w.~l.~o.~g.~assume that $r=1$ and take $g=\tilde g e_1$ with the scalar 
$\tilde g=\frac{1}{|x|^d}\wedge 1$, so that (\ref{wg02}) is satisfied
up to a constant. 
For $g$ of this form we have $v=\partial_1\tilde v$ where 
$-\triangle\tilde v=\tilde g$. Since with $\tilde g$ also $\tilde v$ is
a function of $\rho:=|x|$ only, the two are related by
$\frac{1}{\rho^{d-1}}\frac{d}{d\rho}\rho^{d-1}\frac{d\tilde v}{d\rho}$
$=\tilde g$, which is in view of $\frac{d\tilde v}{d\rho}(0)=0$
is explicitly solved by 
$\frac{d\tilde v}{d\rho}=\frac{\frac{1}{d}+\log\rho}{\rho^{d-1}}$ for $\rho\ge 1$, so that indeed
$\nabla v$ $=\frac{d\tilde v}{d\rho} e_1$ $=-(d-1)\frac{\log\rho}{\rho^d}+O(\frac{1}{\rho^d})$.
We shall comment on the appearance of $r_*$ in the context of Lemma~\ref{LEao}.

\medskip

Let us now comment on Lemma~\ref{LEao}.
In the constant-coefficient case, this lemma contains the (scale-free) statement
\begin{align}\label{ao10}
\Big(|g|\le\frac{1}{|x|^\gamma}\quad\mbox{and}\quad|\nabla g|\le\frac{1}{|x|^{\gamma+1}}\Big)
\quad\Longrightarrow\quad
|\nabla v|\lesssim\frac{1}{|x|^\gamma}.
\end{align}
In the variable-coefficient case, the radius $r_*$ intervenes in two ways: 
As to be expected and as for Lemma~\ref{LEas}, the LHS~of the conclusion of (\ref{ao10})
is replaced by its $L^2$-average on (local) scale $r_*$. However, since because of its
power-law estimate, $g$ potentially contains all scales, $r_*$ enters in a more
subtle way, producing the prefactor on the RHS~of (\ref{wg09}). 
The cross-over at $\gamma=\frac{d}{2}$ is exactly when the bound \eqref{wg13} on $g$ is borderline non-$L^2$-integrable (although
$g$ is assumed to be square-integrable).

\medskip

We finally comment on Lemma \ref{LEap}: It upgrades Lemma \ref{LEao} under the assumption
of bounded support of the RHS $g$. As to be expected for a RHS~in divergence
form, $\nabla v$ decays like the gradient of a dipole, i.~e.~like $\frac{1}{|x|^d}$. 
The prefactor $r^{d-\gamma}$ (times the term in the parentheses)
is such that it crosses over to (\ref{wg09}) for $|x|\sim r$. The additional prefactor
$(\frac{r_*(0)}{r}\vee 1)^\frac{d}{2}$ is as in (\ref{wg01}) and only is active
in the (ungeneric) case that $r_*(0)$ is larger than the localizing scale $r$.

\medskip

Loosely speaking, these lemmas replace 
De Giorgi's $C^{0,\e}$-theory used in our earlier works. 
The price to pay is the appearance of the minimal radius $r_*$ on the RHS. Combined with moment bounds on $r_*$,  Proposition~\ref{s.bis} yields Theorem~\ref{t1}.
As an alternative approach we could appeal to large-scale weighted Calder\'on-Zygmund estimates instead of the large-scale
Schauder estimates, which is a more flexible tool (for it does not require the RHS to be smooth) --- see the corresponding arguments in \cite{DGO}.



\section{Proofs of the quantitative homogenization results}

\subsection{Proof of Lemma~\ref{LEas}: Pointwise decay of the Helmholtz projection I}

We split the proof into two steps. In the first step we assume that $g$ is supported in $B_r$.  We relax this condition in the second step.

\medskip

\step1 We claim that if we combine (\ref{wg02}) with the additional condition $\supp g\subset B_r$,
which implies
\begin{align}\label{wg03}
\supp  g\subset B_r,\quad
\sup|g|\le \frac{1}{r^d},\quad\sup|\nabla g|\le\frac{1}{r^{d+1}},
\end{align}
then we may avoid the logarithm in (\ref{wg01}):
\begin{align}\label{wg04}
\Big(\fint_{B_*(x)}|\nabla v|^2\Big)^\frac{1}{2}
\lesssim (\frac{r_*(0)}{r}\vee 1)^\frac{d}{2}
\frac{1}{(|x|+r)^d}
\quad\mbox{for all}\;x\in\mathbb{R}^d.
\end{align}

\medskip

Here comes the argument:
By measuring lengths (including $r_*$) in units of $r$,
and $g$ in units of $\frac{1}{r^d}$, we may assume $r=1$ -- which is done for convenience.
The first ingredient for (\ref{wg04})
is an immediate consequence of the plain energy estimate for (\ref{wg16})
based on the first two items of (\ref{wg03}):
\begin{align}\label{ao06}
\int|\nabla v|^2\lesssim 1.
\end{align}
The second ingredient is still a consequence of just the first two items of (\ref{wg03})
in conjunction with the mean-value property (\ref{eq:mean-value}):
\begin{align}\label{wg18}
\fint_{B_*(x)}|\nabla v|^2\lesssim(r_*(0)\vee 1)^d \frac{1}{|x|^{2d}}
\quad\mbox{provided}\quad2(r_*(x)\vee 1)\le |x|;
\end{align}
we will give the argument in Substep~1.1 below.
The last ingredient is a consequence of the last item of (\ref{wg03}) in
conjunction with estimate (\ref{eq:SE:2}):
\begin{align}\label{ao07}
\fint_{B_*(x)}|\nabla v|^2\lesssim
\fint_{B_R(x)}|\nabla v|^2+R^2\quad\mbox{for}\;r_*(x)\le R;
\end{align}
we again postpone the argument to Substep~1.2.

\medskip

Equipped with these three estimates, we now derive (\ref{wg04}). We distinguish the 
far-field case $|x|\ge 2$ and the near-field case $|x|<2$. In 
the far-field case of $|x|\ge 2$, we further distinguish the
(generic) case of $2r_*(x)\le |x|$, in which case we are done by (\ref{wg18}),
and the (ungeneric) case of $2r_*(x)>|x|$,
which by the Lipschitz continuity of $r_*$ with constant $<\frac{1}{4}$ implies
\begin{align}\label{wg19}
r_*(0)\sim r_*(x)\gtrsim |x|.
\end{align}
In this case, we directly obtain from (\ref{ao06})
\begin{align*}
\fint_{B_*(x)}|\nabla v|^2\lesssim\frac{1}{r_*^d(x)}\stackrel{(\ref{wg19})}{\lesssim}
r_*^d(0)\frac{1}{|x|^{2d}}.
\end{align*}
In the near-field case of $|x|<2$, we likewise distinguish the (generic) case of $r_*(x)\le 2$,
for which we are done by (\ref{ao07}) with $R=2$ (into which we insert (\ref{ao06})), 
and the (ungeneric) case of $r_*(x)>2$, for which we have
\begin{align*}
\fint_{B_*(x)}|\nabla v|^2\stackrel{(\ref{ao06})}{\lesssim}\frac{1}{r_*^d(x)}\lesssim1.
\end{align*}

\medskip

\substep{1.1} Argument for (\ref{wg18}). We first give a duality argument for
\begin{align}\label{ao08}
\int_{B_R^c}|\nabla v|^2\lesssim\Big(\frac{r_*(0)\vee 1}{R}\Big)^d,
\quad\mbox{for}\;R\ge 1;
\end{align}
where $B_R^c$ is the complement of $B_R$. 
For simplicity, we focus on the first alternative in (\ref{wg16}): 
For an arbitrary but momentarily fixed
square-integrable vector field $h$ that is supported in $\{|x|>R\}$
we consider the Lax-Milgram solution $\nabla w$ of
$\nabla\cdot(a^*\nabla w+h)=0$; it follows from the (weak) definition of the two solutions
that $\int h\cdot\nabla v=\int g\cdot\nabla w$. In view of the first two items
in (\ref{wg03}) this yields $|\int h\cdot\nabla v|$ 
$\lesssim\big(\int_{B_1}|\nabla w|^2\big)^\frac{1}{2}$. By the mean-value property~(\ref{eq:mean-value})
applied to $w$, which is $a^*$-harmonic in $B_R$, and noting that in view of
(\ref{ao06}) we may w.~l.~o.~g.~assume in \eqref{ao08} that $r_*(0)\vee 1\le R$, this yields
\begin{align*}
\Big|\int h\cdot\nabla v\Big|\lesssim 
\Big((\frac{r_*(0)\vee 1}{R})^d\int_{B_R}|\nabla w|^2\Big)^\frac{1}{2},
\end{align*}
so that (\ref{ao08}) follows via the plain energy estimate for $w$
in form of $\int|\nabla w|^2$ $\lesssim\int|h|^2$ from the arbitrariness of $h$.

\medskip

We now upgrade (\ref{ao08}) to (\ref{wg18}) by another application of the
mean-value property (\ref{eq:mean-value}): Setting $R:=\frac{1}{2}|x|\ge r_*(x)\vee 1$
we have that in view of the first item in (\ref{wg03}),
$v$ is $a$-harmonic in $B_R(x)\subset B_R^c$, so that
\begin{align*}
\fint_{B_*(x)}|\nabla v|^2\lesssim\fint_{B_R(x)}|\nabla v|^2
\lesssim\frac{1}{R^d}\int_{B_R^c}|\nabla v|^2.
\end{align*}
Now (\ref{wg18}) follows from inserting (\ref{ao08}) into this (and recalling
$R=\frac{1}{2}|x|$).

\medskip

\substep{1.2} Argument for (\ref{ao07}). This is an immediate consequence of
(\ref{eq:SE:2}) and the estimate
\begin{align*}
\sup_{r_*(x)\le\rho\le R}(\frac{R}{\rho})^2\inf_c\fint_{B_\rho(x)}|g-c|^2
\le R^2\sup|\nabla g|^2\stackrel{(\ref{wg03})}{\le} R^2.
\end{align*}

\medskip

\step2 Conclusion, i.~e.~proof of (\ref{wg01}). Again, measuring lengths in units of $r$,
and $g$ in units of $\frac{1}{r^{d}}$, we may assume $r=1$. Using a 
Lipschitz partition
of unity subordinate to dyadic annuli, we may write $g=\sum_{\rho\ge 1\;\mbox{dyadic}}g_\rho$,
where $g_\rho$ satisfies (\ref{wg03}) (for ``$r=\rho$'', up to a multiplicative constant $\lesssim 1$).
Hence if $\nabla v_\rho$ denotes the corresponding Lax-Milgram solution, cf.~(\ref{wg16}),
we have (\ref{wg04}).
Since by uniqueness $\nabla v=\sum_{\rho\ge 1\;\mbox{dyadic}}\nabla v_\rho$, we obtain
from the triangle inequality
\begin{align*}
\Big(\fint_{B_*(x)}|\nabla v|^2\Big)^\frac{1}{2}
&\lesssim\sum_{\rho\ge 1\;\mbox{dyadic}} (\frac{r_*(0)}{\rho}\vee 1)^\frac{d}{2}
\frac{1}{(|x|+\rho)^d}\nonumber\\
&\le (r_*(0)\vee 1)^\frac{d}{2}
\sum_{\rho\ge 1\;\mbox{dyadic}}\frac{1}{(|x|+\rho)^d}\nonumber\\
&\le (r_*(0)\vee 1)^\frac{d}{2}\Big(\frac{1}{(|x|+1)^d}
\sum_{1\le \rho< |x|\;\mbox{dyadic}}1
+\sum_{\rho\ge |x|\vee 1\;\mbox{dyadic}}\frac{1}{\rho^d}\Big).
\end{align*}
Since the first RHS~sum contains $\lesssim \log(|x|+2)$ summands
and the second (geometric) sum is $\lesssim\frac{1}{(|x|+1)^d}$,
this implies (\ref{wg01}) (with $r=1$).
\qed

\subsection{Proof of Lemma~\ref{LEao}: Pointwise decay of the Helmholtz projection II}

We may assume w.~l.~o.~g.~that $r_*(0)\ge 2$.

\medskip

\step1 We first establish (\ref{wg09}) in the ``generic case'' of
\begin{align}\label{wg07}
r_*(0)\le|x|.
\end{align}
We proceed as in Step 2 of the proof of Lemma \ref{LEas}: With the same partition of
unity we have $g=\sum_{r\ge 1\;\mbox{dyadic}}g_r$,
where $g_r$ satisfies (\ref{wg03}) up to a multiplicative constant $\lesssim r^{d-\gamma}$.
Hence if $\nabla v_r$ denotes the corresponding Lax-Milgram solution, we obtain
by homogeneity from (\ref{wg04}) that
\begin{align}\label{wg08}
\Big(\fint_{B_*(x)}|\nabla v_r|^2\Big)^\frac{1}{2}\lesssim(\frac{r_*(0)}{r}\vee 1)^\frac{d}{2}
\frac{r^{d-\gamma}}{(|x|+r)^d}.
\end{align}
In view of (\ref{wg07}) we distinguish three ranges of the dyadic length scale $r$.
In the first range of $1\le r<r_*(0)$ we estimate the RHS~of (\ref{wg08})
by $(\frac{r_*(0)}{r})^\frac{d}{2}\frac{r^{d-\gamma}}{(|x|+1)^d}$
$\stackrel{(\ref{wg07})}{\le}$ 
$r_*^{\gamma-\frac{d}{2}}(0)r^{\frac{d}{2}-\gamma}\frac{1}{(|x|+1)^\gamma}$,
so that we obtain for this contribution the desired
\begin{equation*}
{\sum_{1\le r<r_*(0)\;\mbox{dyadic}}
\Big(\fint_{B_*(x)}|\nabla v_r|^2\Big)^\frac{1}{2}}
\,\lesssim\left\{\begin{array}{ccc}
1&\mbox{for}&\gamma<\frac{d}{2}\\
\log r_*(0)&\mbox{for}&\gamma=\frac{d}{2}\\
r_*^{\gamma-\frac{d}{2}}(0)&\mbox{for}&\gamma>\frac{d}{2}
\end{array}\right\}\frac{1}{(|x|+1)^\gamma}.
\end{equation*}
In the second range of $r_*(0)\le r<|x|$, we estimate the RHS~of (\ref{wg08})
by $\frac{r^{d-\gamma}}{(|x|+1)^d}$, so that because of $d-\gamma>0$
we obtain for this contribution
\begin{align*}
\sum_{r_*(0)\le r<|x|\;\mbox{dyadic}}
\Big(\fint_{B_*(x)}|\nabla v_r|^2\Big)^\frac{1}{2}
\lesssim |x|^{d-\gamma}\frac{1}{(|x|+1)^d}\le\frac{1}{(|x|+1)^\gamma},
\end{align*}
which is dominated by the RHS~of (\ref{wg09}). In the third and last range
of $r\ge |x|\vee 1$, the RHS~of (\ref{wg08}) is estimated by $\frac{1}{r^\gamma}$
so that because of $\gamma>0$ we obtain also for this contribution
\begin{align*}
\sum_{r\ge |x|\vee 1\;\mbox{dyadic}}
\Big(\fint_{B_*(x)}|\nabla v_r|^2\Big)^\frac{1}{2}
\lesssim\frac{1}{(|x|+1)^\gamma}.
\end{align*}

\medskip

\step2 We now establish (\ref{wg09}) in the ``ungeneric case'' of
\begin{align}\label{wg10}
r_*(0)\ge|x|\quad\mbox{and thus because of $r_*(0)\ge 2$ also}\quad r_*(0)\gtrsim|x|+1,
\end{align}
which by the Lipschitz continuity of $r_*$ implies 
\begin{align}\label{wg12}
r_*(x)\sim r_*(0).
\end{align}
In this case, we carry out the dyadic decomposition of $g$ of Step 1 only from
$r\ge r_*(0)$ onwards, 
so that in addition we will have to deal with the ``near-field'' remainder $g_{near}$.
As in Step 1, the far-field contribution is estimated on the basis of (\ref{wg08}):
\begin{align*}
\sum_{r\ge r_*(0)\;\mbox{dyadic}}
\Big(\fint_{B_*(x)}|\nabla v_r|^2\Big)^\frac{1}{2}
\lesssim\frac{1}{r_*^\gamma(0)}\stackrel{(\ref{wg10})}{\lesssim}\frac{1}{(|x|+1)^\gamma},
\end{align*}
as desired.
We now turn to the near-field part $g_{near}$ of the RHS,  
which satisfies the same bounds (\ref{wg13}) as $g$ and is supported in $B_{2r_*(0)}$. Thus,
 in particular
\begin{align*}
\Big(\int|g_{near}|^2\Big)^\frac{1}{2}
\lesssim 
\left\{\begin{array}{ccc}
r_*^{\frac{d}{2}-\gamma}(0)&\mbox{for}&\gamma<\frac{d}{2}\\
\log^\frac{1}{2} r_*(0)&\mbox{for}&\gamma=\frac{d}{2}\\
1&\mbox{for}&\gamma>\frac{d}{2}
\end{array}\right\}.
\end{align*}
Let $\nabla v_{near}$ denote the corresponding Lax-Milgram solution, cf.~(\ref{wg16}).
In view of (\ref{wg12}), we have $\big(\fint_{B_*(x)}|\nabla v_{near}|^2\big)^\frac{1}{2}$
$\lesssim$ $\frac{1}{r_*^\frac{d}{2}(0)}(\int|\nabla v_{near}|^2\big)^\frac{1}{2}$,
so that we obtain from the plain energy estimate
\begin{align*}
\Big(\fint_{B_*(x)}|\nabla v_{near}|^2\Big)^\frac{1}{2}
\lesssim 
\left\{\begin{array}{ccc}
r_*^{-\gamma}(0)&\mbox{for}&\gamma<\frac{d}{2}\\
r_*^{-\frac{d}{2}}(0)\log r_*^\frac{1}{2}(0)&\mbox{for}&\gamma=\frac{d}{2}\\
r_*^{-\frac{d}{2}}(0)&\mbox{for}&\gamma>\frac{d}{2}
\end{array}\right\},
\end{align*}
which in turn is dominated by the RHS~of (\ref{wg09}) 
in view of (\ref{wg10}).
\qed

\subsection{Proof of Lemma \ref{LEap}: Pointwise decay of the Helmholtz projection III}

We split the proof into six steps.

\medskip

\step1 Extension. 

We claim that there exist $\nabla\bar v$ and $\bar g$
related by (\ref{wg16}) such that
\begin{align}\label{wg28}
\supp{\nabla\bar v-\nabla v}\subset B_{2r}, \quad \supp{\bar g}\subset B_{4r},\quad
\int|\bar g|^2\lesssim\int_{B_{4r}\setminus B_{r}}|\nabla v|^2.
\end{align}
For the convenience of the reader, we give a (slightly shorter) proof
of this tool introduced in \cite[Lemma 5]{BGO}; w.~l.~o.~g.~we may
assume $r=1$ and focus on the first alternative in (\ref{wg16}). 
Letting $\eta$ (momentarily) denoting
a (smooth) cut-off for $B_1$ in $B_2$ we set
\begin{align*}
\bar v=(1-\eta)(v-c)\quad\mbox{where}\quad c:=\fint_{B_2\setminus B_1}v,
\end{align*}
so that by $\nabla\bar v=(1-\eta)\nabla v-\nabla\eta(v-c)$ we obtain
the first item of (\ref{wg28}). Furthermore, by Poincar\'e's estimate 
in $B_2\setminus B_1$ we have
\begin{align}\label{wg31}
\int_{B_4}|\nabla\bar v|^2\lesssim\int_{B_4\setminus B_1}|\nabla v|^2.
\end{align}
We now let $\eta$ denote a cut-off for $B_2$ in $B_4$ and solve the
following Neumann problem for the Poisson equation on $B_4$ for $\nabla w$:
\begin{align}\label{wg29}
\int_{B_4}\nabla\zeta\cdot\nabla w=-\int\nabla(\eta\zeta)\cdot a\nabla\bar v
\quad\mbox{for all smooth}\;\zeta.
\end{align}
It is solvable since the RHS~vanishes for $\zeta=1$ by the already established
first item of (\ref{wg28}):
\begin{align}\label{wg30}
-\int\nabla\eta\cdot a\nabla\bar v\stackrel{(\ref{wg26})}{=}
-\int\nabla\eta\cdot(a\nabla v+g)\stackrel{(\ref{wg16})}{=}0.
\end{align}
Moreover, the linear form on $\zeta$'s defined by the RHS~of (\ref{wg29}) 
is seen to be bounded on $\dot H^1(B_4)$.  Indeed, for $c=\fint_{B_4}\zeta$ by Poincar\'e's estimate
(on $B_4$):
\begin{align*}
\lefteqn{\Big|\int\nabla(\eta\zeta)\cdot a\nabla\bar v\Big|
\stackrel{(\ref{wg30})}{=}\Big|\int\nabla(\eta(\zeta-c))\cdot a\nabla\bar v\Big|}\nonumber\\
&\lesssim\Big(\int_{B_4}(|\nabla\zeta|^2+(\zeta-c)^2)\int_{B_4}|\nabla\bar v|^2\Big)^\frac{1}{2}
\stackrel{(\ref{wg31})}{\lesssim}
\Big(\int_{B_4}|\nabla\zeta|^2\int_{B_4\setminus B_1}|\nabla v|^2\Big)^\frac{1}{2}.
\end{align*}
Hence denoting by $\bar g$ the trivial extension of $\nabla w$ from $B_4$ to the
whole space, we obtain by choosing $\zeta=w$ in (\ref{wg29})
\begin{align*}
\int|\bar g|^2=\int_{B_4}|\nabla w|^2\lesssim\int_{B_4\setminus B_1}|\nabla v|^2.
\end{align*}
Now on the one hand, we have by (\ref{wg29}) and the definition of $\bar g$
\begin{align*}
\int(\nabla(\eta\zeta)\cdot a\nabla\bar v+\nabla\zeta\cdot\bar g)=0.
\end{align*}
On the other hand, like for (\ref{wg30}), we have for any smooth and compactly supported $\zeta$,
\begin{align*}
-\int\nabla(\zeta(1-\eta))\cdot a\nabla\bar v\stackrel{(\ref{wg26})}{=}
-\int\nabla(\zeta(1-\eta))\cdot(a\nabla v+g)\stackrel{(\ref{wg16})}{=}0.
\end{align*}
The combination 
of the last two distributional identities yields the desired $\nabla\cdot(a\nabla\bar v+\bar g)=0$.

\medskip

\step2 Dipole decay. 

We claim that provided $R\ge 4r\ge r_*(0)$ we have
\begin{align}\label{wg32}
\int_{B_R^c}|\nabla v|^2\lesssim(\frac{r}{R})^d\int_{B_{4r}\setminus B_r}|\nabla v|^2.
\end{align}
Here comes the argument: W.~l.~o.~g.~we focus on the first alternative in (\ref{wg16})
and may assume $r=1$. We proceed by duality as in Substep 1.1 in the proof of Lemma 
\ref{LEas}: Given a square integrable
vector field $h$ supported in $B_R^c$ we denote by $\nabla w$ the Lax-Milgram solution
of $\nabla\cdot(a^*\nabla w+h)=0$. Appealing to the Lax-Milgram solution $(\nabla\bar v,\bar g)$ 
of (\ref{wg16}) constructed in Step 1 we have 
$\int h\cdot\nabla\bar v=\int \bar g\cdot\nabla w$. By the support properties,
cf.~(\ref{wg28}), this turns
into $\int h\cdot\nabla v=\int_{B_{4r}}\bar g\cdot\nabla w$, so that by the estimate in
(\ref{wg28}) we obtain $|\int h\cdot\nabla v|$ $\lesssim(\int_{B_{4r}\setminus B_r}|\nabla v|^2
\int_{B_{4r}}|\nabla w|^2)^\frac{1}{2}$. We combine this with the mean-value property (\ref{eq:mean-value})
applied to $w$ (which is $a^*$-harmonic in $B_R$) in form of
$\int_{B_{4r}}|\nabla w|^2$ $\lesssim (\frac{r}{R})^d\int_{B_R}|\nabla w|^2$ followed
by the plain energy estimate on $w$ to obtain
\begin{align*}
\Big|\int h\cdot\nabla v\Big|\lesssim\Big(\int_{B_{4r}\setminus B_r}|\nabla v|^2
(\frac{r}{R})^d\int|h|^2\Big)^\frac{1}{2},
\end{align*}
which yields (\ref{wg32}) by choosing $h=\mathds1_{B_R^c}\nabla v$.

\medskip

\step3 Provided $r_*(0)\le 2r$ we claim the auxiliary statement
\begin{align}\label{wg37}
\fint_{B_{4r}\setminus B_{r}}|\nabla v|^2\lesssim\fint_{B_{4r}\setminus B_{r}}\fint_{B_*(x)}|\nabla v|^2dx.
\end{align}
This statement is a minor modification of \cite[Lemma 6.5]{DO}, we give the
proof for the reader's convenience. Writing $f:=|\nabla v|^2\ge 0$ and $R:=4r$ we have to show
\begin{align*}
\int_{B_{R}\setminus B_{r}} f\lesssim\int_{B_{R}\setminus B_{r}}\fint_{B_*(x)} fdx,
\end{align*}
which by exchanging the order of integration on the RHS~reduces to 
\begin{align*}
\mathds1_{r<|y|<R}\lesssim\int_{B_{R}\setminus B_r}\frac{1}{r_*^d(x)}\mathds 1_{|x-y|<r_*(x)}dx.
\end{align*}
By the $\frac{1}{2}$-Lipschitz continuity of $r_*$, $|y-x|<r_*(x)$ implies 
$r_*(x)\le 2r_*(y)$ so that it is enough to show
\begin{align*}
\mathds1_{r<|y|<R}\lesssim\frac{1}{r_*^d(y)}\int_{B_{R}\setminus B_r}\mathds 1_{|x-y|<r_*(x)}dx.
\end{align*}
Likewise, $|x-y|<\frac{r_*(y)}{2}\le r_*(y)$ implies
$r_*(y)\le 2r_*(x)$ and thus $|y-x|<r_*(x)$ so that it is enough to show
\begin{align*}
\mathds1_{r<|y|<R}\lesssim\frac{1}{r_*(y)}\int_{B_{R}\setminus B_r}\mathds1_{|x-y|<\frac{1}{2}r_*(y)}dx.
\end{align*}
This is true since the assumption $r_*(0)\le 2r=\frac{1}{2}R$ and the 
$\frac{1}{2}$-Lipschitz continuity of $r_*$ ensure that $r_*(y)\le R$ for $y\in B_R$.

\medskip

\step4 Proof of (\ref{wg34}) in the generic case of
\begin{align}\label{wg35}
R:=\frac{1}{2}|x|\ge 4r\ge 2r_*(0).
\end{align}
(Note that in view of Lemma \ref{LEao}, we may w.~l.~o.~g.~sharpen the restriction
$|x|>r$ to the above $|x|>8r$.) 
Since $r_*$ is $\frac{1}{4}$-Lipschitz (\ref{wg35}) implies in particular $r_*(x)\le R$
and thus $B_*(x)\subset B_R(x)\subset B_R^c$ so that we have by the mean-value property \eqref{eq:mean-value}
followed by (\ref{wg32}) (which we may apply by (\ref{wg35})):
\begin{align}\label{wg38}
\Big(\fint_{B_*(x)}|\nabla v|^2\Big)^\frac{1}{2}
&\lesssim\Big(\frac{1}{R^d}(\frac{r}{R})^d\int_{B_{4r}\setminus B_r}|\nabla v|^2\Big)^\frac{1}{2}\nonumber\\
&\stackrel{(\ref{wg35})}{\sim}\frac{r^d}{|x|^d}\Big(\fint_{B_{4r}\setminus B_r}|\nabla v|^2\Big)^\frac{1}{2}.
\end{align}
We conclude  (\ref{wg34}) by appealing to (\ref{wg37}) from Step 3, which we combine with (\ref{wg09}) of Lemma~\ref{LEao}.

\medskip

\step5 Proof of (\ref{wg34}) in the ungeneric case of
\begin{align}\label{wg39}
\frac{1}{2}|x|\ge 2r_*(0)\ge 4r.
\end{align}
In this case the argument from Step 4, however with 
$\frac{r_*(0)}{2}$ playing the role of $r$,  leads to (\ref{wg38}) and we obtain
\begin{align*}
\Big(\fint_{B_*(x)}|\nabla v|^2\Big)^\frac{1}{2}
\lesssim\frac{r_*^d(0)}{|x|^d}
\Big(\fint_{B_{2r_*(0)}\setminus B_{\frac{r_*(0)}{2}}}|\nabla v|^2\Big)^\frac{1}{2}
\lesssim\frac{r_*^\frac{d}{2}(0)}{|x|^d}
\Big(\int|\nabla v|^2\Big)^\frac{1}{2}.
\end{align*}
We combine this with the plain energy estimate:
\begin{align}\label{wg41}
\Big(\int|\nabla v|^2\Big)^\frac{1}{2}
\stackrel{(\ref{wg16})}{\lesssim} 
\Big(\int|g|^2\Big)^\frac{1}{2}
\stackrel{(\ref{wg13})}{\lesssim} 
\left\{\begin{array}{ccc}
r^{\frac{d}{2}-\gamma}&\mbox{for}&\gamma<\frac{d}{2}\\
\log^\frac{1}{2}(r+2)&\mbox{for}&\gamma=\frac{d}{2}\\
1&\mbox{for}&\gamma>\frac{d}{2}
\end{array}\right\}
\end{align}
to
\begin{align*}
\Big(\fint_{B_*(x)}|\nabla v|^2\Big)^\frac{1}{2}
\lesssim\frac{r^{d-\gamma}}{|x|^d}(\frac{r_*^d(0)}{r})^\frac{d}{2}
\left\{\begin{array}{ccc}
1&\mbox{for}&\gamma<\frac{d}{2}\\
\log^\frac{1}{2}(r+2)&\mbox{for}&\gamma=\frac{d}{2}\\
r^{\gamma-\frac{d}{2}}&\mbox{for}&\gamma>\frac{d}{2}
\end{array}\right\}.
\end{align*}
This yields (\ref{wg34}) in the regime $r\le r_*(0)$, cf.~(\ref{wg39}).

\medskip

\step6 Proof of (\ref{wg34}) in the ``very ungeneric'' case of
\begin{align}\label{wg40}
2r_*(0)\ge\frac{1}{2}|x|\ge 4r.
\end{align}
By the Lipschitz continuity of $r_*$, this implies $r_*(x)\sim r_*(0)$, so
that we have
\begin{align*}
\Big(\fint_{B_*(x)}|\nabla v|^2\Big)^\frac{1}{2}\lesssim
\frac{1}{r_*^\frac{d}{2}(0)}\Big(\int|\nabla v|^2\Big)^\frac{1}{2}
\stackrel{(\ref{wg41})}{\lesssim}
\frac{1}{r_*^\frac{d}{2}(0)}
\left\{\begin{array}{ccc}
r^{\frac{d}{2}-\gamma}&\mbox{for}&\gamma<\frac{d}{2}\\
\log^\frac{1}{2}(r+2)&\mbox{for}&\gamma=\frac{d}{2}\\
1&\mbox{for}&\gamma>\frac{d}{2}
\end{array}\right\}.
\end{align*}
This yields (\ref{wg34}) in the regime (\ref{wg40}); indeed, we have
\begin{align*}
\mbox{for}\;\gamma<\frac{d}{2}:\quad&\frac{1}{r_*^\frac{d}{2}(0)}r^{\frac{d}{2}-\gamma}
\lesssim(\frac{r_*(0)}{r})^\frac{d}{2}\frac{r^{d-\gamma}}{|x|^d},\\
\mbox{for}\;\gamma>\frac{d}{2}:\quad&\frac{1}{r_*^\frac{d}{2}(0)}
\lesssim r_*^{\gamma-\frac{d}{2}}(0)(\frac{r_*(0)}{r})^\frac{d}{2}\frac{r^{d-\gamma}}{|x|^d},\\
\mbox{for}\;\gamma=\frac{d}{2}:\quad&\frac{1}{r_*^\frac{d}{2}(0)}\log^\frac{1}{2}(r+2)
\lesssim \log(r_*(0)+2)(\frac{r_*(0)}{r})^\frac{d}{2}\frac{r^{d-\gamma}}{|x|^d}.
\end{align*}
\qed

\subsection{Proof of Proposition~\ref{s.bis}: Sensitivity estimate}

We follow the basic strategy of obtaining sensitivity estimates without explicit Green's function
estimates as in \cite[Step~4, Proof of Lemma~3]{Gloria-Otto-14}.
The new additional ingredient are the PDE estimates of Lemmas~\ref{LEas},~\ref{LEao}, and~\ref{LEap}.
We split the proof into six steps. In the first step we 
give a duality argument. In Step~2, we reformulate the carr\'e-du-champ using either globally and locally Cauchy-Schwarz' inequality.
In Step~3, we control the carr\'e-du-champ for the averaging function $g_1$, and prove the corresponding sensitivity estimate in Step~4.
We finally quickly treat the case of $g_2$ in Steps~5 \&~6.

\medskip

\step{1} Duality argument.
\nopagebreak

We set
\begin{equation*}
  F:=\Big(\int\nabla\phi\cdot  g,\int q \cdot  g \Big),
\end{equation*}
and claim that for all $\ell$ we have $\expec{\cdot}$-a.s.
\begin{equation}\label{s.S2}
    \|{\partial^\fun F}\|_\ell^2\,\lesssim\, \ell^{d} \int  \Big(\fint_{B_\ell(x)} ( |\nabla v|+|g|)|\nabla \phi+e|\Big)^2dx,
\end{equation}
where  $v=(\tilde v,\bar v)$ denotes the unique  Lax-Milgram solutions of 
\begin{eqnarray}\label{s.1-0}
  \nabla \cdot (a^*\nabla \tilde v+g)&=&0,\\
  \label{s.1-0bar}
  \nabla \cdot a^*(\nabla \bar v+g)&=&0.
\end{eqnarray}
To be able to define the functional derivative, we need to consider a subset of $\Omega$ of full measure
that is stable by compactly supported perturbations on which $\nabla \phi$ is well-defined.
Denote by $\Omega'\subset \Omega$ the set of all coefficient fields $a\in\Omega$ such that \eqref{f.2}--\eqref{si.5} admits a sublinear solution (which is then unique up to an additive spatial constant), which one might construct
by considering the stationary extension of the random variable $\nabla \phi(0)$.
If $a,\tilde a\in \Omega'$ are such that $a-\tilde a$ is compactly supported, the sublinearity at infinity of $\phi(a)$ and $\phi(\tilde a)$ ensures
that $\nabla (\phi(a)-\phi(\tilde a))\in L^2(\R^d)$ as a consequence of the relation $- \nabla \cdot a \nabla  (\phi(a)-\phi(\tilde a))= \nabla \cdot (a-\tilde a) (\nabla \phi(\tilde a) +e)$. We might now consistently and uniquely extend $\nabla \phi$ on the set $\Omega'':=\{a'' \in \Omega \,|\,a''=a'+\delta a, a'\in \Omega', \delta a \text{ compactly supported}\}$ by setting $\nabla \phi(a''):=\nabla \phi(a')+\nabla \psi(a',\delta a)$, where $\psi(a',\delta a)$ is the unique Lax-Milgram solution of $-\nabla \cdot (a'+\delta a) \nabla  \psi(a',\delta a) = \nabla \cdot \delta a (\nabla \phi(a')+e)$.

Let $a\in\Omega''$ be fixed from now on.  The functional derivative of $F$ can be characterized as follows: For any Lebesgue measurable, compactly supported, and bounded $\delta a:\R^d\to\R^{d\times d}$ we have 
\begin{equation}\label{s.1-0-0}
  \int \frac{\partial F}{\partial a}(a,y):\delta a(y)\,dy=\Big(\int\nabla\delta\phi\cdot g,\int \delta q\cdot  g\Big),
\end{equation}
where we denote by $\nabla \delta \phi$  the unique Lax-Milgram solution of
\begin{eqnarray}
-\nabla\cdot a\nabla \delta \phi&=&\nabla\cdot \delta a (\nabla\phi+e),\label{s.1-5}
\end{eqnarray}
and $\delta q=\delta a(\nabla \phi+e)+a \nabla \delta \phi$.
We rewrite the RHS of \eqref{s.1-0-0} by appealing to \eqref{s.1-0}. We start with the RHS term that involves $\delta\phi$. 
By definition \eqref{s.1-0} of $\tilde v$, followed by  \eqref{s.1-5},
\begin{equation*}
\int \nabla \delta \phi\cdot  g
\,=\,-\int \nabla \tilde v \cdot a \nabla \delta \phi
\,=\, \int \nabla \tilde v \cdot \delta a(\nabla \phi+e).
\end{equation*}
 For the flux we first note that by \eqref{s.1-0bar} and \eqref{s.1-5},
\begin{eqnarray*}
  \int a \nabla \delta \phi\cdot g&=&\int \nabla \delta \phi\cdot a^*g\stackrel{\eqref{s.1-0bar}}{=}-\int \nabla \delta \phi\cdot a^*\nabla\bar v\\
  &\stackrel{\eqref{s.1-5}}{=}&\int \nabla \bar v\cdot \delta a(\nabla \phi+e).
\end{eqnarray*}
We thus have
\begin{align*}
  \int  \delta q \cdot  g \,=\, \int (g+\nabla\bar v)\cdot (\delta a)(\nabla \phi+e).
\end{align*}
In conclusion, we have shown that for any bounded, compactly supported $\delta a$, we have
\begin{equation*}
  \int \frac{\partial F}{\partial a}(a,x):\delta a(x)\,dx= \Big(\int \nabla\tilde v \cdot \delta a(\nabla\phi+e),\int \big( g+\nabla\bar v\big)\cdot \delta a(\nabla\phi+e)\Big).
\end{equation*}
Hence, by duality for any bounded $D\subset\R^d$ we have
\begin{equation*}
  \int_D|\frac{\partial F}{\partial a}|\leq\int_D(|\nabla v|+|g|)|\nabla\phi+e|,
\end{equation*}
and the claim follows by the definition \eqref{e.carre-brick} of $\|\cdot\|_\ell$.

\medskip

\step2 $L^2-L^2$ and $L^\infty-L^1$-estimates.

We first claim that for all $\ell$ we have
\begin{equation}\label{e.generic-L2-L2}
  \|{\partial^\fun F}\|_\ell^2\,\lesssim\, \int (r_*(x)\vee \ell)^d \fint_{B_*(x)} (|\nabla v|+|g|)^2 dx.
\end{equation}
In addition, in the generic case of $r_*(0)\le \ell$, this can be refined as
\begin{equation}\label{e.specific-L1-Linfty}
\|{\partial^\fun F}\|_\ell^2\,\lesssim \, \int_{|x|\ge \ell} (r_*(x)\vee \ell)^d \fint_{B_*(x)} (|\nabla v|+|g|)^2dx
+ \Big(\int_{|x|<7\ell} \Big(\fint_{B_*(x)} (|\nabla v|+|g|)^2\Big)^\frac12dx\Big)^2.
\end{equation}
We first recall the following result of \cite[Proof of Corollary~4, Step~5]{GNO-reg} based on the $\frac18$-Lipschitz continuity of $r_*$: For all non-negative $h$,
\begin{equation}\label{e.equiv-B*}
\int h(x)dx \,  \sim \, \int \fint_{B_*(x)} h(y)dy dx.
\end{equation}
We start with the proof of \eqref{e.generic-L2-L2}, and use Cauchy-Schwarz' inequality on \eqref{s.S2} to the effect of
\begin{equation}\label{e.zz1}
\|{\partial^\fun F}\|_\ell^2\,\lesssim\,\ell^d \int  \Big( \fint_{B_\ell(x)} |\nabla \phi+e|^2  \fint_{B_\ell(x)} (|\nabla v|+|g|)^2\Big) dx.
\end{equation}
By the mean-value property for the corrector gradient and the $\frac18$-Lipschitz continuity of $r_*$, we control the first  factor 
of the integrand by
$$
\fint_{B_\ell(x)} |\nabla \phi+e|^2\,\lesssim \, \Big(\frac{r_*(x)}{\ell}\vee 1\Big)^d  \,\lesssim\, \Big( \frac{\inf_{B_\ell(x)} r_*+\frac18\ell}{\ell} \vee 1\Big)^d \,\lesssim \Big( \frac{\inf_{B_\ell(x)} r_*}{\ell} \vee 1\Big)^d,
$$
so that, in combination with \eqref{e.equiv-B*}, \eqref{e.zz1} turns into 
\begin{eqnarray*}
  \|{\partial^\fun F}\|_\ell^2&\lesssim&\ell^d \int  \fint_{B_\ell(x)}\Big( \frac{r_*}{\ell} \vee 1\Big)^d{(|\nabla v|+|g|)^2} dx
\\
&=&  \int   ( {r_*} \vee \ell)^d{(|\nabla v|+|g|)^2} 
\\
&\stackrel{\eqref{e.equiv-B*}}{\lesssim}& \int  \fint_{B_*(x)}( r_* \vee \ell)^d (|\nabla v|+|g|)^2 dx.
\end{eqnarray*}
The estimate \eqref{e.generic-L2-L2} now follows by the Lipschitz property of $r_*$
in form of $\sup_{B_*(x)} r_* \lesssim r_*(x)$. 
(If for $r_*(x)\ge \ell$, before using the mean-value property one appeals to the hole-filling estimate in form of 
$\fint_{B_\ell(x)} |\nabla \phi+e|^2\lesssim \Big(\frac{r_*(x)}{\ell} \Big)^{d(1-\e)} \fint_{B_{r_*(x)}(x)} |\nabla \phi+e|^2$, 
one may upgrade \eqref{e.generic-L2-L2} to 
\begin{equation*} 
  \|{\partial^\fun F}\|_\ell^2\,\lesssim\, \int (r_*\vee \ell)^{d(1-\e)}\ell^{d\e} \fint_{B_*(x)} (|\nabla v|+|g|)^2 dx,
\end{equation*}
which is the additional ingredient needed to obtain the estimates of Remark~\ref{R:hole} --- we leave the details to the reader).

\smallskip

 Next, we derive  \eqref{e.specific-L1-Linfty} from \eqref{e.zz1} in the generic case $r_*(0)\le \ell$. We split the integral in \eqref{e.zz1} into  the far-field contribution $|x|\ge 4\ell$ and the near-field contribution $|x|<4\ell$. 
For the far-field contribution, we use Cauchy-Schwarz' inequality and the mean-value property as above, followed by  \eqref{e.equiv-B*}, in form of
\begin{eqnarray*}
 \lefteqn{\int_{|x|\ge 4\ell} \fint_{B_\ell(x)}   |\nabla \phi+e|^2 \fint_{B_\ell(x)}{\ (|\nabla v|+|g|)^2}dx}
  \\
  &\lesssim &  \int_{|x|\ge 4\ell} \fint_{B_\ell(x)}  \Big( \frac{r_*}{\ell} \vee 1\Big)^d { (|\nabla v|+|g|)^2} dx
  \\
  &\le &  \int_{|x|\ge3 \ell}    \Big( \frac{r_*}{\ell} \vee 1\Big)^d{ (|\nabla v|+|g|)^2} dx
  \\
  &\stackrel{\eqref{e.equiv-B*}}\lesssim & \int \fint_{B_*(x)} \Big( \frac{r_*(y)}{\ell} \vee 1\Big)^d { (|\nabla v|+|g|)^2 \mathds 1_{|y|\ge 3 \ell}}dy dx.
\end{eqnarray*}
By the $\frac18$-Lipschitz continuity of $r_*$ and the assumption $r_*(0)\le \ell$,
for $|x|<\ell$ one has $r_*(x) \le r_*(0)+\frac18 |x| < 2\ell$, and therefore the property
$|x|<\ell \,\implies \, B_*(x) \subset B_{3\ell}(0)$. Hence the above inequality reduces to 
\begin{equation}\label{e.a.+1}
  \begin{aligned}
    &\ell^d \int_{|x|\ge 4\ell}    \Big(\fint_{B_\ell(x)} {(|\nabla v|+|g|)}|\nabla \phi+e| \Big)^2dx \\
    &\,\lesssim\, \int_{|x|\ge \ell} (r_*(x) \vee \ell)^d
    \fint_{B_*(x)}{(|\nabla v|+|g|)^2} dx.
  \end{aligned}
\end{equation}
We finally treat the near-field contribution $|x|<4\ell$, and start with the trivial estimate
$$
\ell^d \int_{|x|< 4\ell}    \Big(\fint_{B_\ell(x)} {(|\nabla v|+|g|)}|\nabla \phi+e| \Big)^2dx \,\lesssim \ \ell^{2d}   \Big(\fint_{|x|<5\ell} {(|\nabla v|+|g|)}|\nabla \phi+e| \Big)^2.
$$
With \eqref{e.equiv-B*} we obtain
$$
\int_{|x|<5\ell} {(|\nabla v|+|g|)}|\nabla \phi+e| \lesssim \int \fint_{B_*(x)}  {(|\nabla v|+|g|)}|\nabla \phi+e|\mathds 1_{|y|<5\ell} dydx.
$$
From the Lipschitz-continuity of $r_*$ and the assumption $r_*(0)\le \ell$ in form of $r_*(x)\le \frac18|x| +\ell$, we infer that  $|x|>{7}\ell \implies B_*(x) \cap B_{5\ell}(0)=\emptyset$,
and therefore 
$$
\int_{|x|<5\ell} {(|\nabla v|+|g|)}|\nabla \phi+e| \lesssim \int_{|x|<7\ell} \fint_{B_*(x)}  {(|\nabla v|+|g|)}|\nabla \phi+e|dx.
$$
Using now Cauchy-Schwarz' inequality and the mean-value property $\fint_{B_*(x)} |\nabla \phi+e|^2 \lesssim 1$, we obtain
$$
\ell^d \int_{|x|< 4\ell}    \Big(\fint_{B_\ell(x)} {(|\nabla v|+|g|)}|\nabla \phi+e| \Big)^2dx \,\lesssim \,   \Big(\int_{|x|<7\ell} \Big(\fint_{B_*(x)}{(|\nabla v|+|g|)}^2\Big)^\frac12\Big)^2,
$$
and  \eqref{e.specific-L1-Linfty} follows in combination with \eqref{e.a.+1}.

\medskip

\step3 Estimates of the carr\'e-du-champ for $g_1$.

We claim that for all $\ell$ and $r$ we have
\begin{equation}\label{e.carre-du-champ-generic}
\|{\partial^\fun F_1}\|_\ell^2 \,\lesssim\, \Big(\frac{r_*(0)}{r}\vee 1\Big)^d\Big( \int   \frac{r_*^d(x)\log^2(\frac{|x|}{r}+2)}{(|x|+r)^{2d}}  dx+\Big(\frac \ell r\Big)^d\Big),
\end{equation}
whereas in the regime $\ell \ge r$ we have
\begin{equation}\label{e.carre-du-champ-large}
\|{\partial^\fun F_1}\|_\ell^2 \,\lesssim\, \Big(\frac{r_*(0)}{r}\vee 1\Big)^d\Big( \int   \frac{r_*^d(x)\log^2(\frac{|x|}{r}+2)}{(|x|+r)^{2d}}  dx+\log^{4}(2+\frac \ell r)\Big).
\end{equation}
(Note that for the standard LSI, only \eqref{e.carre-du-champ-generic} is needed.)
We start with the proof of \eqref{e.carre-du-champ-generic} and then turn to the proof of \eqref{e.carre-du-champ-large}.  We first note that the PDE estimate  \eqref{wg01} also holds with $\nabla v$ replaced by  $g_1$ as a direct computation shows. 
By \eqref{wg01} and  $r_*\vee\ell \le r_*+\ell$,   \eqref{e.generic-L2-L2} 
yields
\begin{equation*}
\|{\partial^\fun F_1}\|_\ell^2 \,\lesssim\, \Big(\frac{r_*(0)}{r}\vee 1\Big)^d\Big( \int  \frac{r_*^d(x)\log^2(\frac{|x|}{r}+2)}{(|x|+r)^{2d}}  dx+\ell^d  \int  \frac{\log^2(\frac{|x|}{r}+2)}{(|x|+r)^{2d}}dx\Big),
\end{equation*}
from which \eqref{e.carre-du-champ-generic} follows.

\smallskip

For the proof of \eqref{e.carre-du-champ-large} we distinguish between the generic case  $r_*(0)\le \ell$ and the ungeneric case $r_*(0)> \ell$.
In the generic case $r_*(0)\le \ell$, we appeal to \eqref{e.specific-L1-Linfty} and~\eqref{wg01},  again yielding
\begin{eqnarray*} 
\|{\partial^\fun F_1}\|_\ell^2&\lesssim & \Big(\frac{r_*(0)}{r}\vee 1\Big)^d
\Big( \int  \frac{r_*^d(x)\log^2(\frac{|x|}{r}+2)}{(|x|+r)^{2d}}  dx
+\ell^d  \int_{|x|\ge \ell}  \frac{\log^2(\frac{|x|}{r}+2)}{(|x|+r)^{2d}} dx
\\
&&\qquad \qquad\qquad \qquad \qquad\qquad \qquad \qquad + \Big(\int_{|x|<7\ell}  \frac{{\log}(\frac{|x|}{r}+2)}{(|x|+r)^{d}}dx\Big)^2\Big)
\\
&\lesssim& \Big(\frac{r_*(0)}{r}\vee 1\Big)^d\Big( \int  \frac{r_*^d(x)\log^2(\frac{|x|}{r}+2)}{(|x|+r)^{2d}}  dx+\log^4(2+\frac \ell r)\Big).
\end{eqnarray*}
In the ungeneric case of $r_*(0) > \ell$, we use the $\frac18$-Lipschitz regularity of $r_*$ in form of
 $r_*(x)\vee \ell \lesssim r_*(x)$ for all $|x|\le \ell$, so that the combination of \eqref{e.generic-L2-L2} and the PDE ingredient  \eqref{wg01} now takes the form
\begin{equation*}
\|{\partial^\fun F_1}\|_\ell^2 \,\lesssim\, \Big(\frac{r_*(0)}{r}\vee 1\Big)^d\Big( \int  \frac{r_*^d(x)\log^2(\frac{|x|}{r}+2)}{(|x|+r)^{2d}}  dx+\ell^d  \int_{|x|\ge \ell}  \frac{\log^2(\frac{|x|}{r}+2)}{(|x|+r)^{2d}}\Big),
\end{equation*}
from which \eqref{e.carre-du-champ-large} follows.
\medskip

\step{4} Proof of  \eqref{e.prop:sensitivity}. 
\nopagebreak

Recall that 
$$
\Norm{\partial^{\fun} F_1}_\pi^2\,=\,\int_1^\infty \|{\partial^\fun F_1}\|_{\ell}^2\pi(\ell)d\ell
$$
with $\pi(\ell)= \ell^{-\beta-1}$ for some $\beta>0$ (for the standard LSI, it is enough to consider $\pi$ supported in $[1,2]$).
Using \eqref{e.carre-du-champ-generic} for $\ell \le r$ and \eqref{e.carre-du-champ-large} for $\ell>r$, we obtain 
\begin{eqnarray*} 
\lefteqn{\Norm{\partial^{\fun} F_1}_\pi^2}
\\
&\lesssim & \Big(\frac{r_*(0)}{r}\vee 1\Big)^d \Big( \int  \frac{r_*^d(x)\log^2(\frac{|x|}{r}+2)}{(|x|+r)^{2d}}  dx
+r^{-d} \int_1^r \ell^{d-\beta-1} d\ell+ \int_r^\infty \log^{4}(\frac \ell r+2) \ell^{-\beta-1}d\ell\Big) 
\\
&\lesssim&  \Big(\frac{r_*(0)}{r}\vee 1\Big)^d \Big( \int  \frac{r_*^d(x)\log^2(\frac{|x|}{r}+2)}{(|x|+r)^{2d}}  dx+\frac1{\pi_*(r)}\Big),
\end{eqnarray*}
as desired.

\medskip

\step5 Estimate of the carr\'e-du-champ for $g_2$ and proof of \eqref{e.prop:sensitivity-LSI2}.

\nopagebreak

We introduce the shorthand notation
\begin{equation}\label{Aug6.5}
  \varphi_r(x):=\frac{r^2}{(|x|+r)^2(|x|+1)^{2(d-1)}},\qquad
  C_*:=r_*^{d-2}(0)\mu_{d,d}^2(r_*(0))\Big(\frac{r_*(0)}{r}\vee 1\Big)^d,
\end{equation}
and claim that for all $\ell\geq 1$,
\begin{equation}  \label{e.sen-estim-gen-ell42d-0}
    \|\partial^\fun F_2\|_\ell^2 \,\lesssim \, C_*\,\Big(\int r_*^d(x)\varphi_r(x)\, dx +
    {\color{black}\ell^d\mu_{d,d}(r)}\Big).
\end{equation}
Estimate \eqref{e.sen-estim-gen-ell42d-0} is sufficient for proving \eqref{e.prop:sensitivity-LSI2} in the case of the standard LSI (for which we may assume that $1\le \ell \le 2$). However, the scaling in $\ell$ is not sufficient to treat the case of the general multiscale LSI. For the latter we shall use for all $\ell\geq 1$ the refined estimate:
\begin{equation}
  \label{Aug6.3}
    \|\partial^\fun F_2\|_\ell^2 \,\lesssim \, C_*\,\Big(\int r_*^d(x)\varphi_r(x)\, dx +
    \left\{
   \begin{array}{ll}
   \ell\le r: &\ell^2 \mu_{d,d}(\frac r\ell) \\
   \ell \ge r:&r^2\log^2(\frac\ell r+2)
   \end{array}
   \right\}
   \Big).
\end{equation}
Indeed, we then get
\begin{align*}
  &\int_1^\infty \|\partial^\fun F_2\|_\ell^2\,\ell^{-1-\beta}\,d\ell\\
  &\qquad \lesssim\, C_*\Big(\int r_*^d(x)\varphi_r(x)\,dx+\int_1^r{\color{black}\mu_{d,d}(\frac r\ell) }\ell^{2-1-\beta} \,d\ell+r^2\int_r^\infty \log^2(\frac{\ell}r+2)\ell^{-1-\beta}\,d\ell\Big)\\
  &\qquad \lesssim\, C_*\Big(\int r_*^d(x)\varphi_r(x)\,dx+{\color{black}\mu_{\beta,d}^2(r)}\Big),
\end{align*}
and thus \eqref{e.prop:sensitivity-LSI2} by the definition \eqref{e.carre} of the carr\'e-du-champ.

\smallskip
For the proof of \eqref{e.sen-estim-gen-ell42d-0} and \eqref{Aug6.3} we note that the two PDE estimates \eqref{wg09} and \eqref{wg34} applied with $g=g_2$ and $\gamma=d-1$ combine to
\begin{equation}
  \label{Aug6.1}
  \fint_{B_*(x)}(|\nabla v|+|g_2|)^2\lesssim C_*\varphi_r(x)\qquad\text{for all }x\in\R^d.
\end{equation}
In view of 
\begin{equation}\label{e.bd-phir}
\int \varphi_r \sim \mu_{d,d}(r),
\end{equation}
(cf.~\eqref{e.mu-deff}) and \eqref{Aug6.5}, the simpler estimate \eqref{e.sen-estim-gen-ell42d-0} now follows by combining  \eqref{e.generic-L2-L2} with  \eqref{Aug6.1}:
\begin{equation*} 
  \begin{aligned}
    \|\partial^\fun F_2\|_\ell^2\,\stackrel{\eqref{e.generic-L2-L2}}\lesssim\,&\int r_*^d(x)\fint_{B_*(x)}(|\nabla v|+|g_2|)^2\,dx + \ell^d\int \fint_{B_*(x)}(|\nabla
    v|+|g_2|)^2\,dx\\
    \stackrel{  \eqref{Aug6.1}}\lesssim\, &C_*    \big(\int r_*^d(x)\varphi_r(x)\,dx+ {\color{black}\ell^d\mu_{d,d}(r)}\big) .
  \end{aligned}
\end{equation*}

\smallskip
Next, we prove \eqref{Aug6.3} and first consider the regime $\ell\leq r$. 
In the generic case $r_*(0)\leq\ell$ the $L^1-L^\infty$-estimate \eqref{e.specific-L1-Linfty} of Step~2 is available, which in combination with \eqref{Aug6.1} and the definition of $\varphi_r$ yields
\begin{eqnarray*}
  \|\partial^\fun F_2\|_\ell^2\,
  &\lesssim & C_*\Big( \int_{|x|\ge \ell}(r_*^d(x)+\ell^d)\varphi_r(x) dx + \big(\int_{|x|\le {7\ell}}\varphi_r^\frac12(x)\,dx\big)^2\Big)\lesssim\text{[RHS of \eqref{Aug6.3}]}.
 \end{eqnarray*}
In the ungeneric case $\ell\leq r_*(0)$ the Lipschitz-continuity of $r_*$ implies that
\begin{equation}
  \label{Aug6.4}
  |x|\le  \ell\quad\Rightarrow\quad r_*(x)\geq r_*(0)-\frac18\ell\gtrsim\ell\quad\Rightarrow\quad r_*(x)\vee \ell\lesssim r_*(x).
\end{equation}
Thus, \eqref{e.generic-L2-L2} combined with \eqref{Aug6.1} yields as well
\begin{equation}\label{e.sen-estim-gen-ell42d-01a}
  \begin{aligned}
    \|\partial^\fun F_2\|_\ell^2\,\stackrel{\eqref{Aug6.4}}\lesssim\,&\int_{B_\ell} r_*^d(x)\fint_{B_*(x)}(|\nabla v|+|g_2|)^2\,dx + \int_{|x|\ge \ell}(r_*^d(x)+\ell^d)\fint_{B_*(x)}(|\nabla
    v|+|g_2|)^2\,dx\\
  \stackrel{\eqref{Aug6.1} }  \lesssim\, &C_*\,\Big(\int r_*^d(x)\varphi_r(x)\,dx +\ell^d\int_{|x|\ge \ell}\varphi_r(x)\,dx\Big)\\
    \lesssim\, &\text{[RHS of \eqref{Aug6.3}]}.
  \end{aligned}
\end{equation}
We finally prove \eqref{Aug6.3} in the regime $\ell\geq r$. In the generic case $\ell\geq r_*(0)$ we proceed as before and combine \eqref{e.specific-L1-Linfty}, \eqref{Aug6.1}, and the definition of $\varphi_r$ to the effect of
\begin{eqnarray*}
  \|\partial^\fun F_2\|_\ell^2\,
  &\lesssim & C_*\Big( \int_{|x|\ge \ell}(r_*^d(x)+\ell^d)\varphi_r(x) dx + \big(\int_{|x|\le {7\ell}} \varphi_r^\frac12(x)\,dx\big)^2\Big)\\
  &\lesssim & C_*\Big( \int r_*^d(x)\varphi_r(x)\,dx+r^2\log^2(\frac\ell r+2)\Big) \lesssim\text{[RHS of \eqref{Aug6.3}]}.
 \end{eqnarray*}
In the ungeneric case $\ell \le r_*(0)$, we then proceed as for~\eqref{e.sen-estim-gen-ell42d-01a}.
\qed
\color{black}

\subsection{Proof of Theorem~\ref{t1}: Averages of the
  corrector gradient}

We first treat the case $\beta \ne d$.

\medskip

We define the  random variable
\begin{equation*}
  \mathcal C_{g,r}:=\pi_*^\frac12(r)\int(\nabla\phi(y)\cdot g(y),\nabla\sigma(y)\cdot g(y))\,dy.
\end{equation*}
Note that there exists a universal $C'<\infty$ such that for any non-negative random
variable $\calZ$ we have  the equivalence
\begin{equation}\label{L:exp:1}
\expec{\exp( \calZ)}\le 2\, \iff \,  \forall q\ge1, \expec{\calZ^q}^{\frac{1}{q}}\leq q C' .
\end{equation} 
We split the proof into two steps. In the first step we use nonlinear concentration of measures to control high moments of $\mathcal C_{g,r}$, which we then combine with the control of the tail provided by the stretched exponential moment bound on $r_*$.

\medskip

\step1 Concentration of measures.

For the minimal radius $r_*$, from \eqref{L:exp:1} and the bound  $\exp(\frac1 C\pi_*(r_*))\leq 2$, we have for all $q\geq 1$
\begin{equation}\label{Aug6.7}
  \expec{r_*^{dq}}^\frac1q\,\lesssim\,q^{\frac d{\beta\wedge d}} .
\end{equation}
By \cite[Proposition~3.1]{DG1},  the multiscale logarithmic-Sobolev inequality with the functional derivative yields, cf.~Definition~\ref{def:WFI}, for any random variable  $\calY$ and all $q\ge 1$,
$$
\expec{\calY^{2q}}^\frac1{2q} \,\lesssim\, \sqrt{q} \expec{\Norm{\partial^\fun \calY}_\pi^{2q}}^\frac1{2q}.
$$
Hence, by Proposition~\ref{s.bis} (a) (in connection with Remark~\ref{R:1}) we can apply this result to $\mathcal C_{g,r}$ and get
$$
\expec{\mathcal C_{g,r}^{2q}}^\frac1{2q} \,\lesssim\, \sqrt{q} \expec{ \Big(\frac{r_*(0)}{r}\vee 1 \Big)^{qd}
  \Big( \int r_*^d(x)\frac{\pi_*(r)\log^2(\frac{|x|}{r}+2)}{(|x|+r)^{2d}} dx+1\Big)  ^{q}}^\frac1{2q}.
$$
By an application of the triangle inequality, Jensen's inequality (using that $\int\frac{\log^2(\frac{|x|}{r}+2)}{(|x|+r)^{2d}} dx\lesssim r^{-d}$), and in view of \eqref{Aug6.7} we get
\begin{eqnarray}\notag
  \expec{\mathcal C_{g,r}^{2q}}^\frac1{2q} 
  &\lesssim& \sqrt{q}  \Big( 1+ r^{-\frac d2}\pi_*^\frac12(r)\expec{r_*^{dq}}^\frac1{2q}+r^{-d}\pi_*^\frac12(r) \expec{r_*^{2qd}}^\frac1{2q}\Big)\\
  \label{e.optim1}
  &\lesssim& \sqrt{q}  \Big( 1+ r^{-\frac{d-\beta\wedge d}{2}}{q}^{\frac d{2 (\beta \wedge d)}}+r^{-d+\frac{\beta\wedge d}{2}} {q}^{\frac d{\beta \wedge d}}\Big).
\end{eqnarray}

\medskip

 \step2 Control of the tail.

 By assumption~\eqref{e.assump-Jensen} and the mean-value property, cf.~\eqref{eq:mean-value}, we have  for all $q\ge 1$
 and all $r\ge 1$
 $$
 \expec{\Big|\int(\nabla\phi\cdot g,\nabla\sigma\cdot g)\Big|^{2q}}^\frac1{2q}\,\lesssim\,  \expec{r_*^{dq}}^\frac1{2q},
 $$
 so that by \eqref{Aug6.7}
 \begin{equation}\label{e.optim2}
 \expec{\mathcal C_{g,r}^{2q}}^\frac1{2q}\,\lesssim\, r^\frac{\beta \wedge d}2 {q}^{\frac{d}{2(\beta\wedge d)}},
 \end{equation}
 where the multiplicative constants do not depend on $q$.
 The cross-over between the third RHS term of \eqref{e.optim1} and \eqref{e.optim2} takes place at $r=r_q:= q^{\frac{\beta \wedge d+d}{2(\beta \wedge d)d}}$.
 Using \eqref{e.optim1} for $r \ge r_q$ and \eqref{e.optim2} for $r\le r_q$,
 we then obtain for all $r\ge 1$ and $q\ge 1$ 
 \begin{equation}\label{e.optim4}
 \expec{\mathcal C_{g,r}^{2q}}^\frac1{2q} \,\lesssim\, q^{\frac d{2(\beta \wedge d)}+ \frac{\beta\wedge d+d}{4d}}=q^{\frac {2d^2+(\beta \wedge d) d+(\beta \wedge d)^2}{4 \beta d}},
 \end{equation}
 from which the claimed stretched exponential bound follows using \eqref{L:exp:1}.

\medskip

 In the case $\beta=d$, for which $\pi_*$ displays a logarithmic correction, we may use additional deterministic regularity to avoid a loss in the exponent: By the hole-filling argument (cf.~\cite[Proof of Lemma~3, Step~5]{GNO-reg}), there exists $0<\epsilon=\epsilon(\lambda,d)\le 1$ such that 
 $$
 \expec{\Big(\int_B |(\nabla\phi, \nabla\sigma)|^2\Big)^p}^\frac1{2p}\,\lesssim\,  \expec{r_*^{d(1-\e)p}}^\frac1{2p}
 $$
 so that  for $\beta=d$, \eqref{e.optim2} takes the form
 \begin{equation*} 
 \expec{\mathcal C_{g,r}^{2q}}^\frac1{2q}\,\lesssim\,  \sqrt{q}r^\frac{d}2.
 \end{equation*}
 Likewise, by Remark~\ref{R:hole}, \eqref{e.optim1} takes the form  
\begin{eqnarray*}
  \expec{\mathcal C_{g,r}^{2q}}^\frac1{2q} 
  &\lesssim& \sqrt{q}  \Big( 1+\sqrt q+r^{-\frac d2} {q} \Big).
\end{eqnarray*}
This yields  the claimed stretched exponential bound as above.
 

\subsection{Proof of Theorem~\ref{t2}: Growth of the extended corrector}

In this proof $\calC$ (with various subscripts) denotes a generic random variable or field (that may change from line to line but which~\emph{uniformly} satisfies the stretched exponential moment bound~\eqref{dec.2ter}). Likewise, we write $C$  for a generic (deterministic) constant that might change from line to line, but can be chosen depending only on $d$, $\lambda$, and~$\beta$. It suffices to consider a single component of the extended corrector, say $\phi_i$ and $\sigma_{ijk}$  (with $i,j,k$ fixed). To ease the reading we simply write $(\phi,\sigma)$ instead of $(\phi_i,\sigma_{ijk})$. 

\medskip

\step{1} Structure of the proof.
\nopagebreak

Let $x \in \R^d$ and set $r:=|x|+1$.
The starting point of our argument for \eqref{eq:pr-corr-4} is Poincar\'e's inequality
\begin{eqnarray}
  \Big(\fint_{B( x)}|(\phi,\sigma)|^2\Big)^{\frac12}
  &\leq&
  \Big|\fint_{B( x)}(\phi,\sigma) \Big|+
  \Big(\fint_{B( x)}|(\phi,\sigma)-\fint_{B( x)}(\phi,\sigma)|^2\Big)^\frac12\nonumber \\
  &\lesssim&
         \Big|\fint_{B( x)}(\phi,\sigma)\Big|+\Big(\fint_{B( x)}|(\nabla\phi,\nabla\sigma)|^2\Big)^\frac12.\label{e.ste0}
\end{eqnarray}
 We estimate the first RHS term starting with the  triangle inequality 
\begin{equation}\label{e.ste1}
  \begin{aligned}
    &\Big|\fint_{B( x)}(\phi,\sigma)-\fint_{B}(\phi,\sigma)\Big| \leq \Big|\fint_{B_r( x)}(\phi,\sigma)-\fint_{B_r}(\phi,\sigma)\Big|\\
    &\qquad\qquad+\Big|\fint_{B( x)}(\phi,\sigma)-\fint_{B_r( x)}(\phi,\sigma)\Big|
    +\Big|\fint_{B}(\phi,\sigma)-\fint_{B_r}(\phi,\sigma)\Big|,
  \end{aligned}
\end{equation}
where we recall that $B$ and $B_r$ are centered at the origin. With the notation
\begin{eqnarray*} 
  F_1&:=& \fint_{B_r(x)}(\phi,\sigma)-\fint_{B_r}(\phi,\sigma) ,\\
  F_2( x)&:=& \fint_{B(x)}(\phi,\sigma)-\fint_{B_r( x)}(\phi,\sigma) ,
\end{eqnarray*}
we may rewrite the RHS of \eqref{e.ste1} as $|F_1|+|F_2(x)|+|F_2(0)|$. 
In view of \eqref{e.ste0} and since  $1+\pi_*^{-\frac12}(r)r+{\color{black} \mu_{\beta,d}}\lesssim\mu_{*}(r)$ it suffices to  establish the following estimates
\begin{eqnarray}
  \label{e.steest0}
  \fint_{B(x)}|\nabla (\phi,\sigma)|^2\,&\leq& C \,r_*^d(x),\\
  \label{e.steest1}
  |F_1|&\le&\mathcal C_{r, x}\pi_*^{-\frac12}(r)r,\\
  \label{e.steest2}
  |F_2( x)|+|F_2(0)| &\le&(\calC_r(a(\cdot+x))+\calC_r)\color{black} \mu_{\beta,d}(r),
\end{eqnarray}
where we recall that $\calC_{r,x}$ and $\calC_{r}$ denote random variables with the  stochastic integrability  \eqref{dec.2bis} and \eqref{dec.2ter}, respectively.
The first estimate \eqref{e.steest0} is a direct consequence of the mean-value property of Lemma~\ref{lem:reg}. The second and third estimates are established in the following steps.

\medskip

\step 2 Proof of \eqref{e.steest1}.

As we shall presently argue, $F_1$ can be represented in two ways,
either using the fundamental theorem of calculus in form of
\begin{eqnarray} 
  F_1&=&r  \int_0^1 \fint_{B_r} \nabla(\phi,\sigma)(tx+y)\cdot \frac{x}{|x|}\,dy\,dt
  \label{e.stF1b-comp},
\end{eqnarray}
or using a PDE argument in form of 
\begin{eqnarray}\label{e.stF1b}
 F_1&=&\int (\nabla\phi\cdot g_1,\nabla\sigma\cdot g_1),
\end{eqnarray}
where $g_1$ is a $C^{1,1}$ gradient field that satisfies
\begin{equation}\label{dipol2}
  |g_1(y)|\leq C(d)r(|y|+r)^{-d},\quad |\nabla g_1(y)|\leq C(d)r(|y|+r)^{-d-1}.
\end{equation}
With the latter representation of $F_1$ and in view of  Remark~\ref{R:1}, we are in the position to apply the sensitivity estimate \eqref{e.prop:sensitivity} to $\frac1r F_1$,
and the conclusion follows exactly as in the proof of Theorem~\ref{t1} provided we show that the formulation \eqref{e.stF1b-comp} 
implies the assumption~\eqref{e.assump-Jensen}.
Indeed, by the triangle and Cauchy-Schwarz'  inequalities,
$$
\frac1r |F_1| \,\le \, \int_0^1 \fint_{B_r} |\nabla (\phi,\sigma)|(tx+y)dydt \,\le \,C_d  \int_0^1 \fint_{B_{r+1}} \Big(\fint_{B(y)}  |\nabla (\phi,\sigma)|^2(tx+\cdot) \Big)^\frac12 dydt,
$$
so that by Jensen's inequality and stationarity, we have for all $p\ge 1$
\begin{eqnarray*}
\expec{\Big|\frac1r F_1\Big|^p}^\frac1p &\le& C_d \int_0^1 \fint_{B_{r+1}} \expec{\Big(\fint_{B(y)}  |\nabla (\phi,\sigma)|^2(tx+\cdot) \Big)^\frac p2}^\frac1pdydt \\
&=&C_d \expec{\Big(\fint_{B}  |\nabla (\phi,\sigma)|^2  \Big)^\frac p2}^\frac1p.
\end{eqnarray*}
The proof of \eqref{e.stF1b-comp} is elementary, and we only focus on \eqref{e.stF1b}. To that end let $\nabla h$ denote the unique decaying solution of
\begin{equation}\label{dipol0}
  -\triangle h=\frac1{|B|}1_{B},
\end{equation}
and define $g_1(y)=r^{1-d}\Big(\nabla h(\frac{y-x}r)-\nabla h(\frac yr)\Big)$.
Since the RHS of \eqref{dipol0} is radially symmetric, $h$ is a function of $\rho:=|x|$ only, and since for radial functions $\triangle=\rho^{1-d}\partial_\rho(\rho^{d-1}\partial_\rho)$, we conclude that
\begin{equation*}
  \partial_\rho(\rho^{d-1}\partial_\rho h)(\rho)=
\frac{\rho^{d-1}}{|B|}\times\begin{cases}
  1&\text{for }\rho\leq 1,\\
  0&\text{for }1<\rho.
  \end{cases}
\end{equation*}
Integrating this identity once from $0$ to $\rho$ yields
\begin{equation*}
  \partial_\rho h(\rho)=\frac{\rho}{d|B|}\times\begin{cases}
  1&\text{for }\rho\leq 1,\\
  \frac 1 {\rho^d}&\text{for }1<\rho.
  \end{cases}
\end{equation*}
Since $\nabla h(y)=\partial_\rho h(\rho)\frac y\rho$ and $\nabla^2 h(y)=(\triangle h-\frac d\rho\partial_\rho h)\frac{y}{\rho}\otimes\frac{y}{\rho}+\frac1\rho\partial_\rho h\mathrm{Id}$, we conclude that
for all $y \in \R^d$
\begin{equation}\label{dipol0.1}
|\nabla h(y)|\leq C(d)(|y|+1)^{-d+1} \quad \text{ and }\quad |\nabla^2 h(y)|\leq C(d)(|y|+1)^{-d}.
\end{equation}
Likewise, for all $|y|>1$, we obtain
\begin{equation}\label{dipol0.2}
|\nabla^3 h(y)|\leq  C(d)(|y|+1)^{-d-1}.
\end{equation}
From \eqref{dipol0.1} we deduce that for all $x,y\in \R^d$, 
\begin{equation}\label{dipol0.5}
|\nabla h(y-x)-\nabla h(y)|+|\nabla^2 h(y-x)-\nabla^2 h(y)|\lesssim 1,
\end{equation}
whereas for $|x|\le 1$ and $|y|\gg 1$ we obtain the improvement
\begin{eqnarray}
|\nabla h(y-x)-\nabla h(y)|&\le &\int_0^{1} |\nabla^2 h(y-t x)| |x| dt  \,\stackrel{\eqref{dipol0.1}}\lesssim \, (|y|+1)^{-d}\label{dipol0.3},
\\
|\nabla^2 h(y-x)-\nabla^2 h(y)|&\le &\int_0^{1} |\nabla^3 h(y-t x)| |x| dt  \,\stackrel{\eqref{dipol0.2}}\lesssim \, (|y|+1)^{-d-1}\label{dipol0.4}.
\end{eqnarray}
By definition of $g_1$, \eqref{dipol0.5},  \eqref{dipol0.3}, and  \eqref{dipol0.4} yield \eqref{dipol2} by scaling.
It remains to note that by construction, $g_1$ is a gradient field that satisfies
$$
-\nabla \cdot g_1\,=\, \frac1{|B_r|}(1_{B_r(x)}-1_{B_r}),
$$
so that \eqref{e.stF1b} follows by integration by parts (which is admissible in view
of the dipole decay of $g_1$, cf.~\eqref{dipol0.3}, and of
the sublinear growth of $(\phi,\sigma)$, cf.~Definition~\ref{si}).

\medskip

\step 3 Proof of \eqref{e.steest2}

It suffices to establish the estimate for $F_2:=F_2(0)$, since the estimate for $F_2(x)$ then follows by stationarity of $(\nabla\phi,\nabla\sigma)$. 
As we shall argue in Step~4 below, $F_2$ can be represented in the form of
\begin{equation}\label{e.ant*5}
  F_2=\int (\nabla\phi\cdot g_2,\nabla\sigma\cdot g_2),
\end{equation}
where $g_2:\R^d\to\R^d$ is a $C^{1,1}$ gradient field that satisfies 
\begin{equation}\label{dipol2b}
  \begin{aligned}
    &\supp{g_2}\subset {B}_{r},\quad |g_2(y)|\leq
    C(d)(1+|y|)^{1-d},\quad |\nabla g_2(y)|\leq C(d)(1+|y|)^{-d}.
      \end{aligned}
\end{equation}
With this representation of $F_2$  and in view of  Remark~\ref{R:1}, we are in the position to apply the sensitivity estimate~\eqref{e.prop:sensitivity-LSI2}.
In the rest of this proof we ignore logarithmic corrections at the level of the stochastic integrability (these can indeed 
be avoided by using the hole-filling argument as in the proof of Theorem~\ref{t1}  --- we leave the details to the reader).
Arguing as in the proof of Theorem~\ref{t1} we shall show that the random variable
$$\calC_r:=\frac{1}{\color{black} \mu_{\beta,d}(r)}\int (\nabla\phi\cdot g_2,\nabla\sigma\cdot g_2)$$ satisfies the claimed stochastic integrability. For the argument we recall the notation $C_*$ and $\varphi_r$, see~\eqref{Aug6.5}, and note that
\begin{equation*}
  \int\frac{\varphi_r(y)}{\mu_{d,d}(r)}\,dy\sim 1.
\end{equation*}
On the one hand, as in Step~1 of the proof of Theorem~\ref{t1}, by the multiscale logarithmic Sobolev inequality, the sensitivity estimate~\eqref{e.prop:sensitivity-LSI2}, the 
triangle inequality, Jensen's inequality for  $f\mapsto \int f(y)\frac{\varphi_r(y)}{\mu_{d,d}(r)}\,dy$,  H\"older's inequality, 
and the inequality $\mu_{\beta,d}^2 \ge \mu_{d,d}$ for all $\beta>0$ and $d$, we deduce that  for all $q,r\ge 1$, 
\begin{eqnarray*}
  \frac{1}{\sqrt q}\expec{|\calC_r|^{2q}}^\frac1{2q} &\lesssim& \expec{\Big(\frac{\Norm{\partial^\fun F_2}_\pi}{\color{black} \mu_{\beta,d}(r)}\Big)^{2q}}^\frac1{2q}\\
  &\lesssim&\expec{\Big(\frac{C_*}{\color{black} \mu_{\beta,d}^2(r)}\Big(\int r_*^d(y)\varphi_r(y)\,dy+\mu_{\beta,d}^2(r)\Big)\Big)^q}^\frac1{2q}\\
  &\lesssim&\expec{\Big(C_*\int r_*^d(y)\frac{\varphi_r(y)}{\color{black}\mu_{d,d}(r)}\,dy\Big)^q}^\frac1{2q}\,\lesssim\,\expec{C_*^q\int r_*^{dq}(y)\frac{\varphi_r(y)}{\color{black}\mu_{d,d}(r)}\,dy}^\frac1{2q}\\
  &\stackrel{\eqref{Aug6.5}}\lesssim&\Big(\int\expec{r_*^{q(d-2)}(0){\color{black}\mu_{d,d}^{2q}}(r_*(0))r_*^{dq}(y)}\frac{\varphi_r(y)}{\color{black}\mu_{d,d}(r)}\,dy\Big)^\frac1{2q}\\
  && +r^{-\frac d2}\Big(\int\expec{r_*^{q(2d-2)}(0){\color{black}\mu_{d,d}^{2q}}(r_*(0))r_*^{dq}(y)}\frac{\varphi_r(y)}{\color{black}\mu_{d,d}(r)}\,dy\Big)^\frac1{2q}\\
  &\lesssim&\expec{\big(r_*^{d-1}\big)^{2q}\color{black} \mu_{d,d}^{2q}(r_*)}^\frac1{2q}+r^{-\frac d2}\expec{\big(r_*^{\frac32d-1}\big)^{2q}\color{black} \mu_{d,d}^{2q}(r_*)}^\frac1{2q}.
\end{eqnarray*}
The stochastic integrability of $r_*$, cf. \eqref{Aug6.7} and recalling
that we ignore logarithms there, thus yields
\begin{equation}\label{Aug6.6a}
  \expec{|\calC_r|^{2q}}^\frac1{2q}\,\lesssim\, q^{\frac12+\frac{d-1}{\beta\wedge d}}(1+ r^{-\frac d2} q^\frac{d}{2 (\beta \wedge d)}). 
\end{equation}
On the other hand, by the properties of $g_2$ (in particular $\int \frac1r |g_2|\,dy\lesssim 1$), the mean-value property for $\nabla(\phi,\sigma)$, Jensen's inequality, stationarity of $r_*$, and the stochastic integrability of $r_*$,
we have for all $q,r\ge 1$
\begin{eqnarray}
  \expec{\calC_r^{2q}}^\frac1{2q} &=& \frac{r}{\color{black} \mu_{\beta,d} (r)}\expec{\Big(\int r_*^{\frac d2}(y)\frac1r|g_2(y)|\,dy\Big)^{2q}}^\frac{1}{2q}\lesssim \frac{ r}{\color{black} \mu_{\beta,d} (r)}\int \expec{ r_*^{d{q}}(y)}^\frac{1}{2q}\frac1r|g_2(y)|\,dy\nonumber \\
  &\lesssim& \frac{ r}{\color{black} \mu_{\beta,d} (r)}\expec{r_*^{d{q}}}^\frac{1}{2q}\,\lesssim\,\frac{ r}{\color{black} \mu_{\beta,d} (r)} q^{\frac12\frac{d}{\beta\wedge d}}\label{Aug6.6b}.
\end{eqnarray}
By using \eqref{Aug6.6a} for $q\leq r^{\beta \wedge d}$ and \eqref{Aug6.6b} for $q\geq r^{\beta \wedge d}$, we deduce that
$$
  \expec{\calC_r^{2q}}^\frac1{2q}\,\lesssim\,q^{\frac12+\frac{d-1}{\beta\wedge d}}.
$$
Thus, $\calC_r$ satisfies the claimed stretched exponential moment bound.

\medskip

\step 4 Proof of \eqref{e.ant*5} and \eqref{dipol2b}.
 
Let $\nabla h$ denote the Lax-Milgram solution of
\begin{equation}\label{dipol1}
  -\triangle h=\frac1{|B|}1_{B}-\frac1{|B_r|}1_{B_r}.
\end{equation}
As in Step~2, $h$ is a function of $\rho:=|y|$ only, and 
\begin{equation*}
  \partial_\rho(\rho^{d-1}\partial_\rho h)(\rho)=
\frac{\rho^{d-1}}{|B_r|}\times\begin{cases}
  1-r^d&\text{for }\rho\leq 1,\\
  1&\text{for }1<\rho\leq r,\\
  0&\text{for }r<\rho.
  \end{cases}
\end{equation*}
Integrating this identity once from $0$ to $\rho$ yields
\begin{equation*}
  \partial_\rho h(\rho)=\frac{\rho}{d|B_r|}\times\begin{cases}
  1-r^d&\text{for }\rho\leq 1,\\
  1-(\frac r \rho)^d&\text{for }1<\rho\leq r,\\
  0&\text{for }r<\rho,
  \end{cases}
\end{equation*}
and we conclude that
\begin{equation}\label{e.ste21}
  \supp{\nabla h}\subset  B_r\qquad\text{ and }\qquad|\nabla h(y)|\leq C(d)(|y|+1)^{1-d}
\end{equation}
and
\begin{eqnarray}
    |\nabla^2h(y)|&\leq&  
    \begin{cases}
 \frac{1}{|B|}(\frac 1{r^{d}}+1)&\text{for }|y|\leq 1,\\
 C(d)(|y|+1)^{-d}&\text{for }1<|y|\leq r,\\
  0&\text{for }r<|y|,
  \end{cases}\nonumber
\\
&  \leq&  C(d)(|y|+1)^{-d}.\label{e.ste22}
\end{eqnarray}
With $g_2=\nabla h$, \eqref{dipol2b}  is a consequence of \eqref{e.ste21} and \eqref{e.ste22}, and \eqref{e.ant*5} follows by an integration by parts, which is admissible for the same reason as in Step~2 above.

\medskip

\step{5} Existence of stationary correctors for $\beta>2$ and $d>2$.
\nopagebreak

\noindent Provided one can define stationary correctors $(\overline \phi,\overline \sigma)$ 
with finite second moments, the exponential moment bounds are proven as above.
Let us quickly argue for uniqueness. Let  $(\overline \phi,\overline \sigma)$ and  $(\tilde \phi,\tilde \sigma)$
be two stationary extended correctors with vanishing expectation. Since the gradients of correctors are uniquely defined, $C=(\overline \phi,\overline \sigma)-(\tilde \phi,\tilde \sigma)$ is a random vector. By construction this random vector is shif-invariant, and therefore deterministic by ergodicity. Hence $C=\expec{(\overline \phi,\overline \sigma)-(\tilde \phi,\tilde \sigma)}=0$, and uniqueness is proved.
It remains to prove existence.
We only address the case of $\overline \phi$ since the proof for $\overline \sigma$ is similar.
We shall construct $\overline \phi$ as the weak limit as $T\uparrow \infty$ of the stationary approximate corrector $\phi_T$ defined as the unique solution of 
$$
\frac1T \phi_T -\nabla \cdot a(\nabla \phi_T+e)\,=\,0 \quad \text{ in }\R^d
$$
in the class $\{\chi\in H^1_\loc(\R^d)\, | \, \sup_{x\in \R^d} \fint_{B(x)} \chi^2+|\nabla \chi|^2<\infty\}$, see e.g.~\cite[Lemma~2.2]{Gloria-Otto-10b}.
To this aim, we only have to prove that the second moment $\expec{\phi_T^2}$ remains bounded with respect to $T$, and argue by weak 
compactness in the probability space.
Let $\phi$ denote the unique non-stationary corrector that satisfies $\fint_B \phi=0$ almost surely.
The uniform bound on $\expec{\phi_T^2}$ shall follow from the bound $\sup_{x\in\R^d} \expec{\fint_{B(x)} \phi^2}<\infty$ established above, cf.~\eqref{eq:pr-corr-4}.
Indeed, substracting the equation for $\phi_T$ and $\phi$ yields
\begin{equation}\label{e.ant*7}
\frac1T (\phi_T-\phi) -\nabla \cdot a \nabla(\phi_T-\phi)\,=\,-\frac1T \phi \quad \text{ in }\R^d.
\end{equation}
For all $z\in \R^d$, set $\eta_{z,T}:x\mapsto \exp(-c\frac{|x-z|}{\sqrt{T}})$ (where $c>0$ will be fixed later), which satisfies $|\nabla \eta_{z,T}|^2=c^2 \frac1T \eta_{z,T}^2$.
On the one hand, testing \eqref{e.ant*7} with $\eta_{z,T}^2 (\phi_T-\phi)\in H^1(\R^d)$ yields the energy estimate (for $c>0$ small enough)
$$
\frac1T \int \eta_{z,T}^2 (\phi_T-\phi)^2 +\int \eta_{z,T}^2 |\nabla (\phi_T-\phi)|^2
\,\lesssim \, \frac1T \int  \eta_{z,T}^2 \phi^2,
$$
so that by taking the expectation,
\begin{equation}\label{eq:pr-corr.5}
 \expec{\int\eta_{z,T}^2 (\phi_T-\phi)^2} 
\,\lesssim \,  \expec{\int  \eta_{z,T}^2 \phi^2} .
\end{equation}
On the other hand, by the triangle inequality and stationarity of $\phi_T$, we have
\begin{eqnarray*}
\sqrt{T}^d \expec{\phi_T^2}\lesssim \int\eta_{z,T}^2 \expec{\phi_T^2}&= &  \expec{\int \eta_{z,T}^2\phi_T^2}
\\
&\lesssim &\expec{\int\eta_{z,T}^2\phi^2}
+\expec{\int\eta_{z,T}^2(\phi_T-\phi)^2}.
\end{eqnarray*}
Combined with \eqref{eq:pr-corr.5}, this yields the desired estimate: 
\begin{equation*}
\sqrt{T}^d \expec{\phi_T^2} \,\lesssim\  \expec{\int\eta_{z,T}^2\phi^2}\, \lesssim\, \sup_{x\in\R^d}\expec{\fint_{B(x)}\phi^2} \int\sup_{B(y)} \eta_{z,T}^2 dy
\, \lesssim\,\sqrt{T}^d.
\end{equation*}
\qed


\subsection{Proof of Theorem~\ref{t3} and Proposition~\ref{p3}: Quantitative two-scale expansion}

By scaling it is enough to prove the claim for $\e=1$ (and we drop the subscript $\e=1$ except for local averages $(\cdot)_1$).
We split the proof into two steps.
In the first step we derive a representation formula for $z:=u-(u_{\ho,1}+\phi_i \partial_i u_{\ho,1})$ based on the extended corrector,
and then apply the large-scale $L^p$-estimates in the second step.

\medskip

\step1 Proof of the representation formula 
\begin{equation}\label{e.t3.5}
-\nabla \cdot a \nabla z\,=\,\nabla \cdot \big(g-g_1+(a\phi_i-\sigma_i) \nabla \partial_i u_{\ho,1}\big).
\end{equation}
By definition,
$$
a\nabla z\,=\, a\nabla u-\partial_i u_{\ho,1}  a(\nabla \phi_i+e_i)-a \phi_i \nabla \partial_i u_{\ho,1},
$$
so that by \eqref{e.t3-1}, the property \eqref{f.5} of $\nabla \cdot \sigma_i$, and the equation \eqref{f.2} for $\phi_i$, 
\begin{eqnarray*}
\lefteqn{-\nabla \cdot a \nabla z}
\\
&=&
\nabla \cdot g+\nabla \cdot (a\phi_i \nabla \partial_i u_{\ho,1})+\nabla \cdot a_\ho \nabla u_{\ho,1}+\nabla \cdot \Big((a(\nabla \phi_i+e_i)-a_\ho e_i)\partial_i u_{\ho,1}\Big) 
\\
&=& \nabla\cdot (g-g_1) +\nabla \cdot(a\phi_i \nabla \partial_i u_{\ho,1}) +\nabla \partial_i u_{\ho,1} \cdot (\nabla \cdot \sigma_i).
\end{eqnarray*}
It remains to note that, by
the skew-symmetry of $\sigma_i$,
$$
\nabla \partial_i u_{\ho,1} \cdot (\nabla \cdot \sigma_i)\,=\,-\nabla \cdot (\sigma_i \nabla \partial_i u_{\ho,1}).
$$
This yields \eqref{e.t3.5}.

\medskip

\step2 Proof of  \eqref{e.t3-3} and \eqref{e.t3-4}.

The large-scale Calder\'on-Zygmund estimates \eqref{I1-no-weight} applied to \eqref{e.t3.5} yield for all $1<p<\infty$
$$
\int  \Big(\fint_{B_*(x)} |\nabla z|^2\Big)^\frac p2dx  \,\lesssim\, \int  \Big(\fint_{B_*(x)} |g-g_1|^2\Big)^\frac p2 dx+\int   \Big(\fint_{B_*(x)} |(\phi,\sigma)|^2|\nabla^2 u_{\ho,1}|^2\Big)^\frac p2dx.
$$
For the first RHS integrand we appeal to the Poincar\'e inequality with mean-value zero on unit balls, which yields (since $r_*(x)\ge 1$)
$$
\fint_{B_*(x)} |g-g_1|^2\,\lesssim\, \fint_{B_{2*}(x)} |\nabla g|^2  ,
$$
where $B_{2*}(x):=B_{2r_*(x)}(x)$.
For the second RHS term, we take advantage of the average on $u_{\ho}$ in the form $|\nabla^2u_{\ho,1}|^2(y)\le \fint_{B(y)} |\nabla^2 u_\ho|^2$, so that
$$
\fint_{B_*(x)} |(\phi,\sigma)|^2|\nabla^2 u_{\ho,1}|^2\,\lesssim\, \fint_{B_{2*}(x)} \Big(\fint_{B(y)}|(\phi,\sigma)|^2\Big) |\nabla^2 u_{\ho}|^2dy.
$$
We may then appeal to Theorem~\ref{t2} and obtain
\begin{multline}
\int  \Big(\fint_{B_*(x)} |\nabla z|^2\Big)^\frac p2dx  \,\lesssim\, \int  \Big(\fint_{B_{2*}(x)} |\nabla g|^2\Big)^\frac p2 dx+\int   \Big(\fint_{B_{2*}(x)} \calC^2
\mu_*^2 |\nabla^2 u_{\ho}|^2\Big)^\frac p2dx  \\
  \lesssim\, \int  \Big(\fint_{B_{*}(x)} |\nabla g|^2\Big)^\frac p2 dx+\int   \Big(\fint_{B_{*}(x)} \calC^2
\mu_*^2 |\nabla^2 u_{\ho}|^2\Big)^\frac p2dx,\label{e.claim_01}
\end{multline}
where we used the notation $\mu_*$ for $\mu_*(|\cdot|)$, and used
the Lipschitz continuity of $r_*$ to cover $B_{2*}(x)$ by a fixed number of $B_*(y)$'s.
For general $1<p<\infty$, this yields \eqref{e.t3-2} after rescaling.

\medskip

Next we show that \eqref{e.claim_01} yields for all $2\le p<\infty$,
\begin{equation}\label{e.claim_00}
\int  \Big(\fint_{B_*(x)} |\nabla z|^2\Big)^\frac p2dx \,\lesssim\, \int |\nabla g |^p +\int \calC^p
\mu_*^p |\nabla^2 u_{\ho}|^p.
\end{equation}
By real interpolation,  it is enough to prove \eqref{e.claim_00} for  $p=2$ and for all $p\ge 4$.
For $p=2$, this coincides with  \eqref{e.claim_01} using property \eqref{e.equiv-B*}, and we thus focus on $p\ge 4$.
Denoting by $\calM$ the maximal function, we may reformulate  \eqref{e.claim_01} as
$$
\int  \Big(\fint_{B_*(x)} |\nabla z|^2\Big)^\frac p2dx   \,\lesssim\, \int \calM(|\nabla g|^2)^\frac p2 +\int \calM(\calC^2
\mu_*^2 |\nabla^2 u_{\ho}|^2)^\frac p2 ,
$$
so that we obtain \eqref{e.claim_00} by the boundedness of the maximal function w.~r.~t.~$L^{q}(\R^d)$ for $1<q<\infty$.
After rescaling, this yields 
$$
\int \Big(\fint_{B_{*,\e}(x)} |\nabla z_\e|^2\Big)^\frac p2 dx \,\lesssim\, \e^p \int |\nabla g|^p +\e^p\mu_{*}^p(\tfrac1\e) \int \calC^{p}_{\frac\cdot\e}
\mu_*^{p} |\nabla^2 u_{\ho}|^{p} .
$$
Set
\begin{equation}\label{e.twoscale.constant}
\calC_{\e,g,p}\,:=\, \bigg(\frac{\int |\nabla g|^p +\int \calC^{p}_{\frac\cdot\e}
\mu_*^{p} |\nabla^2 u_{\ho}|^{p}}{\int  (1+\mu_*^{p}) |\nabla g|^p}\bigg)^\frac1p.
\end{equation}
Then, for all $q\ge p$, by Jensen's inequality   in probability,
$$
\expec{\calC_{\e,g,p}^q}^\frac1q \,\le\,\bigg(\frac{\int |\nabla g|^p +\int \expec{\calC^{q}}^\frac pq 
\mu_*^{p} |\nabla^2 u_{\ho}|^{p}}{\int  (1+\mu_*^{p}) |\nabla g|^p}\bigg)^\frac1p.
$$
By standard weighted Calder\'on-Zygmund estimates for  the (constant-coefficient) homogenized operator $-\nabla \cdot a_\ho \nabla$ (which we may apply since $p(1-\frac{2\wedge \beta}{2})<p<d(p-1)$ for all $p\ge 2$ and $d\ge 2$),
this yields
$$
\expec{\calC_{\e,g,p}^q}^\frac1q \,\lesssim\, \expec{\calC^{q}}^\frac 1q \bigg(\frac{\int |\nabla g|^p +\int  
\mu_*^{p} |\nabla g|^{p}}{\int  (1+\mu_*^{p}) |\nabla g|^p}\bigg)^\frac1p \,\le \,  \expec{\calC^{q}}^\frac 1q ,
$$
as claimed.


\subsection{Proof of Theorem~\ref{t4}: Extension to other multiscale functional inequalities}

We only prove the results corresponding to Theorem~\ref{t1} since the adaptations of the other results are similar.
We split the proof into two steps. In the first step, we show that the arguments developed in the proof of Proposition~\ref{s.bis}  part (a)
carry over to the case of the oscillation. In the second step, we turn to the concentration argument, based on \cite{DG1}.

\medskip

\step1 Sensitivity estimate.
\nopagebreak

In this step we argue that \eqref{e.carre-du-champ-generic} also holds for the oscillation: For all $\ell\ge 1$  
\begin{equation}\label{e.t4-1}
 \|\partial^{\osc{}{}} F\|_\ell^2 \,\lesssim \, \Big(\frac{r_*(0)}{r}\vee 1\Big)^d\Big( \int  \frac{r_*^d(x)\log(\frac{|x|}r +2)}{(|x|+r)^{2d}}  dx+\Big(\frac \ell r\Big)^d\Big).
\end{equation}
To this aim, we only need to argue that \eqref{s.S2} also holds for the oscillation: %
\begin{equation}\label{s.S2osc}
 \|{\partial^{\osc{}{}} F}\|_\ell^2 \,\lesssim\, \ell^{d} \int \Big(\fint_{B_\ell(x)}  (|\nabla v|+|g|)|\nabla \phi+e|\Big)^2dx,
\end{equation}
where $v=(\tilde v,\bar v)$ denotes the unique Lax-Milgram solutions of \eqref{s.1-0} and \eqref{s.1-0bar}.
We presently argue in favor of \eqref{s.S2osc}, and only treat $\nabla \phi$ (the argument for the flux is similar and left to the reader).
Let $a$ and $a'$ be two admissible coefficient fields, and for all $x\in \R^d,\ell\ge 1$ set $\delta_{x,\ell} a:=(a'-a)\mathds{1}_{B_\ell(x)}$ (the indicator function of $B_\ell(x)$), $a_{x,\ell}:=a+\delta_{x,\ell} a$,
and denote by $\phi$ the corrector associated with $a$, by $\phi_{x,\ell}$ the  corrector 
associated with $a_{x,\ell}$, and set $\delta_{x,\ell} \phi:=\phi_{x,\ell}-\phi$.
By definition of the oscillation, we have
$$
 \|{\partial^{\osc{}{}} F}\|_\ell^2 \,\le \, 4\ell^{-d} \int \sup_{a'} |F_1(a_{x,\ell})-F_1(a)|^2dx,
$$
where
$$
F(a_{x,\ell})-F(a)\,=\,\int \nabla \delta_{x,\ell} \phi \cdot  g.
$$
We start by writing down the equation satisfied by $\delta_{x,\ell} \phi$, which is a variant of \eqref{s.1-5}:
\begin{eqnarray}
-\nabla\cdot a\nabla \delta_{x,\ell} \phi&=&\nabla\cdot \delta_{x,\ell} a (\nabla\phi_{x,\ell}+e).\label{s.1-5osc}
\end{eqnarray}
By definition \eqref{s.1-0} of  $\tilde v$ and a duality argument based on \eqref{s.1-5osc}, we get 
\begin{equation}\label{e.duality-osc}
  \|\partial^{\osc{}{}} F\|_\ell^2\,\leq\, 4 \ell^{-d} \int  \sup_{a'} \Big(\int_{B_\ell(x)} |\nabla v||\nabla\phi_{x,\ell}+e|\Big)^2.
\end{equation}
It remains to note that 
$$
\sup_{a'} \int_{B_\ell(x)} |\nabla\phi_{x,\ell}+e|^2 \,\lesssim\,  \int_{B_\ell(x)} |\nabla\phi+e|^2,
$$
which follows from the triangle inequality and the energy estimate 
$$
\int |\nabla \delta_{x,\ell} \phi|^2 \,\lesssim\, \Big|\int \nabla \delta_{x,\ell} \phi \cdot \delta_{x,\ell} a ( \nabla \phi+e)\Big| \,\lesssim\, \Big(\int |\nabla \delta_{x,\ell} \phi|^2\Big)^\frac12 \Big(\int_{B_\ell(x)} |\nabla \phi+e|^2\Big)^\frac12
$$
associated with the following equivalent form of  \eqref{s.1-5osc}
$$
-\nabla \cdot a_{x,\ell} \nabla \delta_{x,\ell} \phi \,=\,-\nabla \cdot \delta_{x,\ell} a(\nabla \phi+e).
$$
We thus conclude that \eqref{s.S2osc} holds.

\medskip

\step2 Concentration of measures.

The starting point is \cite[Proposition~3.1(ii)]{DG1} which yields for all $q\ge 1$,
$$
\expec{F^{2q}}^\frac1{2q} \,\lesssim\, q \expec{\int_1^\infty   \|{\partial^{\osc{}{}} F }\|_{\ell}^{2q} \pi(\ell)d\ell}^\frac1{2q}.
$$
By \eqref{e.t4-1}  and the definition of $\pi$,  this turns into
$$
\expec{(r^\frac d2F)^{2q}}^\frac1{2q} \,\lesssim\,  q \Big(\int_1^\infty \ell^{dq} \exp(-\frac1C \ell^\beta)d\ell\Big)^\frac1{2q} \expec{(1+\frac{r_*}{r})^{dq} r_*^{dq}}^\frac1{2q}.
$$
An elementary calculation yields  
$$
\Big(\int_0^\infty \ell^{dq} \exp(-\frac1C \ell^\beta)d\ell\Big)^\frac1q = \Big( \int_0^\infty t^{\frac{dq+1-\beta}{\beta}} \exp(-\frac1C t)dt\Big)^\frac1q \,\lesssim\, q^{\frac d \beta},
$$
where the multiplicative constant does not depend on $q$, 
whereas \cite[Theorem~4]{GNO-reg} yields under the assumptions of Theorem~\ref{t4}
$$
\expec{\exp(\frac1C r_*^{d\wedge \beta})}<2,
$$
which implies by \eqref{L:exp:1}
$$
\expec{(1+\frac{r_*}{r})^{dq}r_*^{dq}}^\frac1{2q}\,\lesssim\,  q^{\frac d{2(\beta \wedge d)}}{(1+r^{-\frac d2}q^{\frac d{2(\beta \wedge d)}})}.
$$
 For all $q\le r^{\beta \wedge d}$ these estimates combine to
 $$
 \expec{|r^\frac d2F|^{q}}^\frac1{q} \,\lesssim\, q^{1+\frac d{2\beta}+\frac d{2(\beta \wedge d)}},
 $$
 whereas the mean-value property in the form $\expec{|F|^{q}}^\frac1{q} \lesssim \langle{r_*^{\frac {qd}2}\rangle}^\frac1q \lesssim q^{\frac d{2(\beta \wedge d)}}$ yields
 in the remaining range $q\ge r^{\beta\wedge d}$
 $$
 \expec{|r^\frac d2F|^{q}}^\frac1{q} \,\lesssim\, q^{\frac d{2(\beta \wedge d)}}r^\frac d2\,\lesssim\,q^{\frac d{\beta \wedge d}}\,\le\,q^{1+\frac d{2\beta}+\frac d{2(\beta \wedge d)}}.
 $$

This implies the desired stochastic integrability~\eqref{e.st-int-osc} by \eqref{L:exp:1}.


\section*{Acknowledgments}

AG acknowledges financial support from the European Research Council under
the European Community's Seventh Framework Programme (FP7/2014-2019 Grant Agreement
QUANTHOM 335410). SN acknowledges financial support by the DFG (German Research Foundation) in the context of TU Dresden's Institutional
Strategy ``The Synergetic University'' and by project 405009441.
AG and FO acknowledge the hospitality of the Mittag-Leffler Institute and the support of the 
Chaire Schlumberger  at IH\'ES.

\appendix 

\section{Origin of the criticalities}\label{append}

Estimate~\eqref{e.def-Gdbeta} in Theorem~\ref{t2} displays two critical behaviors: a criticality due to randomness at decay $\beta=2$ in all dimensions, and a criticality in dimension $d=2$ for $\beta\ge 2$ due to the Helmholtz projection. As usual in homogenization, the case of small ellipticity ratio, which amounts to replacing $-\nabla \cdot a\nabla$ by $-\triangle$ (see below), allows to use explicit calculations. The aim of this appendix is to
show that the two criticalities mentioned above are unavoidable and that scaling in Theorem~\ref{t2} is optimal. 
For this purpose, we consider the following setting: Let $\{\omega(x)\}_{x\in\R^d}$ denote a scalar, stationary, centered Gaussian random field with a covariance function
\begin{equation}
  \label{e.app2}  c(x):=\cov{\omega(x)}{\omega(0)}=\expec{\omega(x)\omega(0)}.
\end{equation}
We consider the linearized corrector equation
$  -\triangle\phi=\nabla\cdot \omega(x) e_1$,
which  is the limit of the corrector equation for vanishing ellipticity ratio: it is the linearization of the standard corrector equation $-\nabla\cdot a_{h}(\nabla\phi+e_1)=0$ with $a_h(x)=(1+h\omega(x))\Id$ at $h=0$. 

\begin{lemma}[Opimality of the scaling]
  Let $0<\beta\leq 2$. Then there exists a Gaussian, stationary, centered random field whose covariance function satisfies
  \begin{equation*}
    |c(x)|\leq |x|^{-\beta}\qquad\text{for }|x|\geq 1,
  \end{equation*}
  such that for $R\gg 1$ solutions $\phi$ with sublinear growth 
  of the linearized corrector equation 
\begin{equation}\label{e.app1}
  -\triangle\phi=\nabla\cdot \omega(x) e_1
\end{equation}
satisfy
  \begin{equation*}
    \fint_{R<|x|< 2R}\expec{|\phi(x)-\phi(0)|^2}dx\,\gtrsim \,
    \begin{cases}
      R^{2-\beta}&\beta<2,\\
      \log^2R&\beta=2=d,\\
      \log R&\beta=2<d.
    \end{cases}
  \end{equation*}
\end{lemma}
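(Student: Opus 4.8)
The plan is to pass to Fourier space, where for the linearized equation $-\triangle\phi=\partial_1\omega$ the increment $\phi(x)-\phi(0)$ is an explicit linear functional of $\omega$, so that its variance is an explicit integral against the spectral density of $\omega$; the lower bound then reduces to choosing $\omega$ so that this density is as singular at the origin as the constraint $|c(x)|\le|x|^{-\beta}$ permits. Write $f\ge0$ for the spectral density of $\omega$, so $c(x)=\int_{\R^d}e^{ik\cdot x}f(k)\,dk$ and $\int f=c(0)<\infty$. From $-\triangle\phi=\partial_1\omega$ one reads off that the stationary field $\nabla\phi$ has (matrix--valued) spectral density $\frac{k\otimes k\,k_1^2}{|k|^4}f$; in particular $\expec{|\nabla\phi|^2}=\int\frac{k_1^2}{|k|^2}f\le\int f<\infty$ and $\expec{\nabla\phi}=0$, so $\phi$ is determined up to an additive constant (two sublinear solutions differ by a sublinear harmonic function, hence by a constant) and
\[
\expec{|\phi(x)-\phi(0)|^2}=\int_{\R^d}|e^{ik\cdot x}-1|^2\,\frac{k_1^2}{|k|^4}\,f(k)\,dk ,
\]
a finite integral since $|e^{ik\cdot x}-1|^2\le|x|^2|k|^2$ absorbs the $|k|^{-2}$ singularity near the origin and $f\in L^1$ controls the rest.

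For the choice of field I would take, for every $0<\beta\le2$ (and $d\ge2$, hence $\beta\le d$), the covariance $c(x):=(1+|x|^2)^{-\beta/2}$. This is positive definite by the subordination identity $(1+|x|^2)^{-\beta/2}=\frac1{\Gamma(\beta/2)}\int_0^\infty t^{\beta/2-1}e^{-t}e^{-t|x|^2}\,dt$ and the positivity of the Fourier transform of each Gaussian; the same identity gives the spectral density $f(k)=c_{d,\beta}\int_0^\infty t^{\beta/2-d/2-1}e^{-t-|k|^2/(4t)}\,dt>0$. Trivially $c(0)=1$ and $|c(x)|=(1+|x|^2)^{-\beta/2}\le|x|^{-\beta}$ for $|x|\ge1$, so this $c$ meets the hypotheses. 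Splitting the $t$-integral at $t\sim|k|^2$ yields, on a small ball $|k|\le c_0$, the bounds $f(k)\gtrsim|k|^{\beta-d}$ if $\beta<d$ and $f(k)\gtrsim\log\frac1{|k|}$ if $\beta=d$ (this last case is exactly $d=\beta=2$, and this $K_0$-type logarithm is the strongest singularity compatible with $|c(x)|\lesssim|x|^{-2}$).

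Finally I would average over the annulus. By scaling, $\fint_{R<|x|<2R}|e^{ik\cdot x}-1|^2\,dx=2h(|k|R)$ where $h(\mu):=1-\fint_{1<|y|<2}\cos(\mu y_1)\,dy$ is continuous, strictly positive on $(0,\infty)$, and tends to $1$ as $\mu\to\infty$ by Riemann--Lebesgue, so $c_3:=\inf_{\mu\ge1}h(\mu)>0$. By Tonelli,
\[
\fint_{R<|x|<2R}\expec{|\phi(x)-\phi(0)|^2}\,dx\ \ge\ 2c_3\int_{1/R\le|k|\le c_0}\frac{k_1^2}{|k|^4}\,f(k)\,dk ,
\]
and inserting the lower bounds on $f$, together with $\int_{S^{d-1}}k_1^2\,d\sigma=\frac{|S^{d-1}|}{d}|k|^2$, the right-hand side is at least a positive multiple of $\int_{1/R}^{c_0}r^{\beta-3}\,dr$ for $\beta<2$, of $\int_{1/R}^{c_0}r^{-1}\,dr$ for $\beta=2<d$, and of $\int_{1/R}^{c_0}r^{-1}\log\frac1r\,dr$ for $\beta=2=d$; for $R\gg1$ these equal, up to constants, $R^{2-\beta}$, $\log R$, and $\log^2R$ respectively, which is the assertion.

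The only step I expect to require care is the doubly critical case $\beta=d=2$: the extra logarithm is forced by the interplay between the bound $|c(x)|\le|x|^{-2}$ (which caps how slowly $c$ may decay, hence how singular $f$ may be) and the need for $f$ to be as singular as possible at the origin; $(1+|x|^2)^{-1}$ sits exactly at this threshold, with $f\sim K_0(|k|)\sim\log\frac1{|k|}$, whereas any polynomial singularity of $f$ would violate the decay of $c$. The remaining ingredients --- the spectral representation, the equidistribution estimate $h(\mu)\gtrsim1$ for $\mu\ge1$, the Bessel-type asymptotics of $f$, and the elementary radial integrals --- are routine.
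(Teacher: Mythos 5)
Your proposal is correct and follows essentially the same route as the paper: the spectral representation $\expec{|\phi(x)-\phi(0)|^2}=\int|e^{ik\cdot x}-1|^2\frac{k_1^2}{|k|^4}f(k)\,dk$, the lower bound on the annulus average of the oscillatory factor for $|k|\gtrsim 1/R$, and a spectral density singular at the origin ($|k|^{\beta-d}$, resp.\ logarithmic when $\beta=d=2$) yielding the three radial integrals. The only difference is cosmetic: you use the single Bessel-type family $c(x)=(1+|x|^2)^{-\beta/2}$ with subordination for all $0<\beta\le2$, whereas the paper takes the Riesz kernel $|x|^{-\beta}$ for $\beta<2\wedge d$ and $(1+|x|^2)^{-1}$ for $\beta=2$; your choice even has the minor advantage of a finite pointwise variance in the subcritical case.
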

\begin{proof}
We distinguish two cases:  $\beta<2 \wedge d$ and  $\beta = 2$.

\medskip

  \step 1 The case $\beta<2 \wedge d$.
  
  In that case we  set
  \begin{equation*}
    c(x):=|x|^{-\beta}.
  \end{equation*}
The Fourier transform of $c$ is again a power law: 
  \begin{equation}\label{scaling1}
    \widehat c(k)=a|k|^{\beta-d},
  \end{equation}
where $a$ denotes a positive constant only depending on $d$ and $\beta$ (hence it is the covariance of a 
stationary Gaussian field). Consider the linearized corrector. Let $\Phi$ denote the fundamental solution of the Laplacian and set $h_x(y):=\Big(\partial_1\Phi(x-y)-\partial_1\Phi(0-y)\Big)$. Then by representing $\phi(x)-\phi(0)$ with help of the fundamental solution, and taking the Fourier transform, we get
\begin{eqnarray*}
  \lefteqn{\fint_{R<|x|<2R}\expec{(\phi(x)-\phi(0))^2}\,dx}\\
  &=&  \fint_{R<|x|<2R}\int\int h_x(y)h_x(y')\expec{\omega(y)\omega(y')}\,dy'\,dy\,dx\\
  &=&  \fint_{R<|x|<2R}\int h_x(y)(c*h_x)(y)\,dy\,dx\\
  &=&\fint_{R<|x|<2R}\int |e^{-ix\cdot k}-1|^2\frac{k_1^2}{|k|^4}\hat c(k)\,dk,
\end{eqnarray*}
where we used that $\hat h_x(k)=(e^{-ix\cdot k}-1)\frac{k_1}{|k|^2}$.
Note that the oscillatory integral satisfies for some constant $C>0$ (only depending $d$):
\begin{equation*}
  |k|\geq C \frac1R\qquad\Rightarrow\qquad
  \fint_{R<|x|<2R}|e^{-ix\cdot k}-1|^2\,dx\geq \frac1C.
\end{equation*}
We thus conclude that (using the non-negativity of the integrand and \eqref{scaling1})
\begin{equation*}
\fint_{R<|x|<2R}\expec{(\phi(x)-\phi(0))^2}\,dx\,\geq \,\frac aC\int_{|k|\geq \frac{C}{R}}\frac{k_1^2}{|k|^4}|k|^{\beta -d}\,dk\,\gtrsim\,R^{2-\beta}.
\end{equation*}

\medskip

\step 2 The case $\beta=2$.

In that case we set
\begin{equation*}
  c(x)=(1+|x|^2)^{-1}.
\end{equation*}
Let $\Phi_1$ denote the fundamental solution for the operator $(1-\triangle)$. Then, in the sense of distributions we have $\mathcal Fc=\Phi_1$. Since $\Phi_1$ is positive, $c$ is the covariance  of a Gaussian random field. By repeating the argument of Step~1 we arrive at the lower-bound estimate
\begin{equation*}
  \fint_{R<|x|<2R}\expec{(\phi(x)-\phi(0))^2}\,dx \,\geq \,\frac 1C\int_{1\ge |k|\geq \frac{C}{R}}\frac{k_1^2}{|k|^4}\Phi_1(k)\,dk.
\end{equation*}
In the case $d=2$ the singularity of $\Phi_1$ at $0$ is logarithmic, i.e., $\Phi_1(k)\gtrsim \log(\frac1{|k|})$ for $0<|k|\le 1$, and thus for $R\gg 1$:
\begin{equation*}
  \fint_{R<|x|<2R}\expec{(\phi(x)-\phi(0))^2}\,dx \,\gtrsim\,\log R\int_{1\ge |k| \ge \frac{C}{R}}\frac{k_1^2}{|k|^4}\,dk\gtrsim \log^2R.
\end{equation*}
In the case $d>2$ we have $\Phi(k)\gtrsim |k|^{2-d}$ for $0<|k|\le  1$, and thus for $R\gg 1$:
\begin{eqnarray*}
  \fint_{R<|x|<2R}\expec{(\phi(x)-\phi(0))^2}\,dx &\gtrsim&\int_{1\ge |k|\geq \frac{C}{R}}\frac{k_1^2}{|k|^4}|k|^{2-d}\,dk\,\gtrsim\, \int_{1\ge |k|\geq \frac{C}{R}}|k|^{-d}\,dk\,\gtrsim\, \log R.
\end{eqnarray*}
\end{proof}


\bibliographystyle{plain}

\end{document}